\documentclass[12pt]{amsart}
\usepackage{amssymb,amsmath,mathrsfs}
\usepackage[margin=2.4cm]{geometry}
\usepackage[colorlinks=true,linkcolor=blue,citecolor=blue]{hyperref}
\usepackage{aliascnt}
\usepackage{cleveref}
\usepackage{tikz}

\def\scal#1#2{\langle #1, #2\rangle}
\def\R#1{\mathbb{R}^{#1}}
\newcommand\alg{\mathbb{A}}

\newcommand\Idm{\mathrm{Idm}}
\newcommand\hIdm{\widehat{\mathrm{Idm}}}
\newcommand\hcub{\mathscr{H}_3}
\newcommand\Fs[2]{[#1,#2]}
\newcommand\so{\mathfrak{so}}
\newcommand\Cartan[1]{\mathcal{C}_{#1}}
\newcommand\Pol{\mathsf{Pol}}
\newcommand\Kill{\mathcal{K}}
\newcommand\Tens{\mathcal{T}}
\DeclareFontFamily{U}{wncy}{}
\DeclareFontShape{U}{wncy}{m}{n}{<->wncyr10}{}
\newcommand\Che{\mathord{\text{\usefont{U}{wncy}{m}{n}Ch}}}
\DeclareMathOperator{\trace}{tr}
\DeclareMathOperator{\re}{Re}
\DeclareMathOperator{\conv}{conv}
\DeclareMathOperator{\supp}{supp}

\DeclareMathOperator{\Sym}{Sym}
\DeclareMathOperator{\Id}{Id}
\DeclareMathOperator{\sgn}{sgn}

\newtheorem{theorem}{Theorem}[section]
\newcommand{\sharedtheorem}[2]{%
  \newaliascnt{#1}{theorem}%
  \newtheorem{#1}[#1]{#2}%
  \aliascntresetthe{#1}%
  \crefname{#1}{#2}{#2s}\Crefname{#1}{#2}{#2s}}
\crefname{theorem}{Theorem}{Theorems}\Crefname{theorem}{Theorem}{Theorems}
\sharedtheorem{proposition}{Proposition}
\sharedtheorem{lemma}{Lemma}
\sharedtheorem{corollary}{Corollary}
\sharedtheorem{conjecture}{Conjecture}
\newtheorem*{theoremA}{Theorem A}
\newtheorem*{theoremB}{Theorem B}
\newtheorem*{theoremC}{Theorem C}
\newtheorem*{theoremD}{Theorem D}
\newtheorem*{theoremE}{Theorem E}
\theoremstyle{definition}
\sharedtheorem{definition}{Definition}
\sharedtheorem{problem}{Problem}
\theoremstyle{remark}
\sharedtheorem{remark}{Remark}
\sharedtheorem{example}{Example}
\numberwithin{equation}{section}
\crefname{corollary}{Corollary}{Corollaries}\Crefname{corollary}{Corollary}{Corollaries}
\crefname{table}{Table}{Tables}\Crefname{table}{Table}{Tables}

\title[Hsiang cubics as spherical Chebyshev polynomials]{Hsiang cubics as spherical Chebyshev polynomials:\\ equioscillation and idempotent quadrature}
\author{Vladimir G. Tkachev}
\address{Department of Mathematics, Link\"oping University, Sweden}
\email{vladimir.tkatjev@liu.se}
\thanks{V.G.T. has been supported by SveFUM and the Stiftelsen L\"angmanska kulturfonden, Grant BA26-2924.}
\date{Draft, October 2026}
\dedicatory{Dedicated to Bj\"orn Gustafsson on the occasion of his 80th birthday.}
\subjclass[2020]{Primary 41A50, 17A30; Secondary 35J94, 33C55, 44A60, 65D32, 53C42}
\keywords{Hsiang algebra, cubic minimal cone, Chebyshev polynomial, equioscillation, Kolmogorov criterion, infinity-Laplacian, spherical harmonics, spherical design, quadrature identity, idempotent}

\begin{document}

\begin{abstract}
A Hsiang cubic is a cubic form on $\R{n}$ whose zero set is a minimal cone; equivalently, it is the cubic form of a commutative nonassociative metrized algebra satisfying a single identity of degree four. We show that the intrinsic geometry of such an algebra, encoded in its idempotents and their Peirce decompositions, is the geometry of a Chebyshev-type extremal problem on the sphere. A Hsiang cubic equioscillates on $S^{n-1}$: its maximum set is the set of normalized idempotents, and on suitable great circles through an idempotent it restricts to the Chebyshev polynomial $T_3$. For the spherical analogue of the problem of least deviation, the minimization of $\|p\|_\infty^2/\|p\|_2^2$ over harmonic cubics, a sharp lower bound based on Kellogg's inequality shows that Cartan's isoparametric cubics are the global minimizers in dimensions $5$, $8$, $14$ and $26$. In general, we compute the ratio of every Hsiang cubic in terms of the Killing form and characterize stationarity by a Kolmogorov criterion: it amounts to a positive measure on the idempotent variety. Such measures are given explicitly in several cases which together cover every Hsiang cubic known to exist; in particular, all of them are stationary. These measures give quadrature identities on the idempotent variety whose moments are invariants of the algebra; they are spherical designs exactly in the Einstein (Killing metrized) case, i.e.\ when the Killing form is proportional to the inner product. Distinguished low-dimensional Hsiang cubics are strict local minimizers, whereas a polar Hsiang cubic is a stationary non-minimizer.
\end{abstract}

\maketitle

\setcounter{tocdepth}{1}
\tableofcontents

\section{Introduction}\label{sec:intro}

\subsection{Motivation}
This paper is about a class of cubic forms for which a hard extremal problem on the sphere, namely how flat a harmonic cubic can be, admits explicit answers: the cubic forms of Hsiang algebras. Their algebraic structure plays a role similar to that of the disk and its images under polynomial maps in the theory of quadrature domains.

The classical Chebyshev polynomial $T_k(t)=\cos(k\arccos t)$ is characterized by \emph{equioscillation}: on $[-1,1]$ all its local extrema have the same absolute value and alternate in sign. Equioscillation is the reason why $2^{1-k}T_k$ deviates least from zero among monic polynomials of degree $k$, and, through the Kolmogorov criterion, it is the prototype of the characterization of best uniform approximations \cite{Kolmogorov-chebyshev,Rivlin,Singer-bestapproximation}. The alternation points of $T_k$ also carry positive quadrature formulas.

The purpose of this paper is to show that the same circle of ideas appears, unexpectedly but very naturally, in the theory of \emph{Hsiang algebras}, a class of commutative algebras attached to cubic minimal cones \cite{Hsiang,Nadirashvili-Tkachev-Vladuts,tkachev2010minimal,Tk10b,Tkachev-jordan,Fox-Tkachev,Tkachev-book}. Here the word \emph{algebra} means a real vector space with a bilinear multiplication which is \emph{not} assumed to be associative; Hsiang algebras are commutative but nonassociative (a non-trivial Hsiang algebra has no unit and does not satisfy the associative law). A \emph{Hsiang cubic} (or Hsiang eigencubic) is a cubic form $u$ on $\R{n}$ satisfying
\begin{equation}\label{hsiangeq}
|Du|^2\Delta u-\tfrac12\scal{Du}{D|Du|^2}=\theta\,|x|^2u,\qquad\theta\in\R{}.
\end{equation}
W.-Y.~Hsiang \cite{Hsiang} observed that the left hand side is $|Du|^3$ times the mean curvature of the level sets of $u$, so that \eqref{hsiangeq} forces the cone $u^{-1}(0)$ to be minimal. The algebraic theory of \eqref{hsiangeq} \cite{tkachev2010minimal,Tk10b,Tkachev-jordan}, \cite[Chap.~6]{Nadirashvili-Tkachev-Vladuts}, \cite{Tkachev-book} starts from the observation that $u(x)=\tfrac16\scal{x^2}{x}$ for a unique commutative algebra structure on $\R{n}$ with an invariant inner product, and that \eqref{hsiangeq} becomes a polynomial identity of degree four for this algebra.

The thesis of this paper is that \emph{the intrinsic idempotent and Peirce geometry of a Hsiang algebra is the geometry of a Chebyshev-type extremal problem on the sphere}. The correspondence is summarized in \Cref{tab:dictionary}.

\begin{table}[ht]
\renewcommand\arraystretch{1.25}\small
\begin{tabular}{l|l|l}
Chebyshev theory & Hsiang algebra $\alg$, cubic form $u$ & \\\hline
$T_k=\cos k\varphi$ & $u=M\cos3\varphi$ on great circles through $\hat c$ & \Cref{the:equi}\\
equal extremal values $\pm1$ & all idempotents have the same length & \Cref{sec:hsiang}\\
alternation set & normalized idempotents $\pm\hIdm$ & \Cref{the:equi}\\
second variation at an extremum & Peirce decomposition of $L(c)$ & \Cref{the:equi}\\
Bernstein--Szeg\H{o} equality & Kellogg equality on $S\cap(\R{}c\oplus\alg_c(-1))$ & \Cref{pro:firstint}\\
$T_k$: equality everywhere & Cartan's cubics (eikonal), unique minimizers & \Cref{the:cartanmin}\\
level of least deviation & $M=\max_{S^{n-1}}u$, \ $-\theta=54M^2$ & \Cref{pro:infeig}\\
Kolmogorov criterion & Kolmogorov measures on $\Idm(\alg)$ & \Cref{the:kolm}\\
quadrature on alternation points & representing measures on $\Idm(\alg)$ & \Cref{the:moments}\\
moments of the quadrature & mass $\frac{|c|}6\trace \Kill$, moments $0$, $\frac{|c|}6\Kill$, $\frac16\Tens$ & \Cref{the:moments}
\end{tabular}
\medskip
\caption{The dictionary between Chebyshev theory and Hsiang algebras.}
\label{tab:dictionary}
\end{table}

We do not develop the whole story here; rather, we prove the basic results along this correspondence and formulate the problems which we think are worth the attention of specialists in approximation theory, potential theory and nonlinear PDE on the one hand, and in nonassociative algebra on the other.

\subsection{The Chebyshev ratio}
Let $\hcub=\hcub(\R{n})$ be the space of harmonic cubic forms on $\R{n}$ and $\sigma$ the normalized surface measure on $S^{n-1}$. For $0\ne p\in\hcub$ we call
\begin{equation}\label{Rdef}
R(p)=\frac{\|p\|_\infty^2}{\|p\|_2^2},\qquad\text{where}\quad \|p\|_\infty=\max_{x\in S^{n-1}}|p(x)|,\quad \|p\|_2^2=\int_{S^{n-1}}p^2\,d\sigma,
\end{equation}
the \emph{Chebyshev ratio} of $p$, and we consider
\begin{equation}\label{kappa0}
\kappa_n:=\min\bigl\{R(p):\ 0\ne p\in\hcub(\R{n})\bigr\}.
\end{equation}
The ratio is invariant under scaling $p\mapsto tp$ and under rotations $p\mapsto p\circ g$, $g\in O(n)$, and the minimum is attained by compactness. The constraint $\|p\|_2=1$ replaces the monic normalization of the classical problem, and $\kappa_n$ measures how flat a harmonic cubic can be on the sphere. We work with the squared ratio because all its values below are rational.

\emph{Why this ratio.} The numerator is canonical: $\|p\|_\infty$ is the spectral norm of the symmetric $3$-tensor $S$ with $p(x)=S(x,x,x)$, by a classical theorem of Banach (see \cite{Agrachev-Kozhasov-Uschmajew}), and $3\|p\|_\infty$ is the Lipschitz constant of $p$ on the unit ball $B$, by Kellogg's theorem (\Cref{rem:kellogg}). The denominator is a choice. Every cubic form splits $O(n)$-invariantly as
\begin{equation}\label{harmdec}
\mathscr P_3=\hcub\oplus|x|^2\mathscr H_1,\qquad p=h+|x|^2\ell,\quad \ell=\frac{\Delta p}{2(n+2)},
\end{equation}
into its \emph{harmonic part} $h$ and its \emph{trace part} $|x|^2\ell$ \cite{Axler-Bourdon-Ramey}, and an invariant norm may weigh the two parts independently. The two natural choices, the $L^2(S^{n-1})$-norm and the Bombieri norm $\|p\|_B$ (the Frobenius norm of $S$), differ on the trace part (\Cref{rem:bombieri}) but are proportional on $\hcub$: by \eqref{bombieri}, $R(p)=\frac{N_n}6\,(\|p\|_\infty/\|p\|_B)^2$, where
\begin{equation}\label{Nn}
N_n:=n(n+2)(n+4)
\end{equation}
is the normalizing constant of the Fischer inner product on $\hcub$ (\Cref{lem:fischer}). Since Hsiang cubics are harmonic (\Cref{the:exact}), both choices lead to the same problem. We use the $L^2$-normalization on the analytic side (the reproducing kernel of $\hcub$ and the Kolmogorov measures of \Cref{sec:kolm}) and the Bombieri normalization on the algebraic side (tensors, Waring decompositions and the representing measures of \Cref{sec:waring}); for instance, $\|u\|_B^2=\frac1{36}\trace \Kill$ for a Hsiang cubic with Killing operator $\Kill$, see \eqref{killing}.

\emph{Context.} Comparisons of the uniform and quadratic norms of spherical harmonics are classical. The maximum of $R$ over spherical harmonics of a given degree is the square of the sharp constant in the Nikolskii inequality and is attained by the zonal harmonics (\Cref{sec:problem}, \cite[Chap.~1]{Dai-Xu}), and the size of $R$ for typical (random) spherical harmonics is well understood \cite{Burq-Lebeau}. The opposite problem of minimizing $R$, i.e.\ of finding the flattest harmonics, is the spherical counterpart of the search for flat trigonometric polynomials \cite{Kahane-unimodular}. Without the harmonicity constraint, and with the Bombieri norm, it becomes the problem of the best rank-one approximation ratio of symmetric tensors, which was solved by Agrachev, Kozhasov and Uschmajew \cite{Agrachev-Kozhasov-Uschmajew} for binary forms and for ternary cubics; remarkably, their extremal forms are again Chebyshev polynomials (\Cref{rem:aku,pro:aku}). For general (non-symmetric) trilinear forms the corresponding ratio was studied by Cobos, K\"uhn and Peetre \cite{Cobos-Kuhn-Peetre-Duke,Kuhn-Peetre-embedding}, again with distinguished extremal forms (\Cref{pro:peetre}). Problem \eqref{kappa0} is the harmonic counterpart of their problem; we are not aware of earlier results on it. The question is what the minimizers look like.

\subsection{Main results}
We begin with a result which solves the problem completely in some dimensions. It concerns arbitrary harmonic cubics and is independent of the theory of Hsiang algebras: its proof (\Cref{sec:cartanmin}) uses only the Dirichlet energy of $p$ in the unit ball, Kellogg's inequality \eqref{kellogg} and the classification of eikonal cubics.

\begin{theoremA}[Cartan's cubics are global minimizers; \Cref{sec:cartanmin}]
For every $n\ge2$ and every non-zero harmonic cubic $p$ on $\R n$, $R(p)\ge\frac{n+4}3$, with equality if and only if $p$ satisfies the eikonal equation $|Dp|^2=9\|p\|_\infty^2|x|^4$. Consequently $\kappa_n=\frac{n+4}3$ exactly for $n\in\{2,5,8,14,26\}$, the minimizers being $\re z^3$ and Cartan's isoparametric cubics $\Cartan d$ (\Cref{sec:hsiang}); in particular $\kappa_5=3$.
\end{theoremA}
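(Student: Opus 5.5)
We will compare two expressions for the Dirichlet energy of $p$ in the unit ball $B$. Since $p$ is a harmonic homogeneous polynomial of degree $3$, Green's formula gives
\[
\int_B|Dp|^2\,dx=\int_{S^{n-1}}p\,\partial_r p\,dS=3\int_{S^{n-1}}p^2\,dS=3\,\omega_{n-1}\|p\|_2^2,
\]
where $\omega_{n-1}=|S^{n-1}|$.

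On the other hand, $|Dp|^2$ is homogeneous of degree $4$. Kellogg's inequality \eqref{kellogg} (the gradient of a harmonic homogeneous polynomial on the sphere is bounded by its degree times its sup norm, cf.\ \Cref{rem:kellogg}) gives pointwise $|Dp(x)|^2\le 9\|p\|_\infty^2|x|^4$. Integrating in polar coordinates,
\[
\int_B|Dp|^2\,dx\le 9\|p\|_\infty^2\,\omega_{n-1}\int_0^1 r^{4}r^{n-1}\,dr=\frac{9\,\omega_{n-1}}{n+4}\|p\|_\infty^2 .
\]
Combining the two displays yields $3\|p\|_2^2\le \frac{9}{n+4}\|p\|_\infty^2$, that is, $R(p)\ge\frac{n+4}3$.

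For the equality case: equality holds if and only if the continuous nonnegative function $9\|p\|_\infty^2|x|^4-|Dp|^2$ vanishes identically on $B$, hence everywhere by homogeneity. This is exactly the eikonal equation $|Dp|^2=9\|p\|_\infty^2|x|^4$. Conversely, if the eikonal equation holds, then the inequality above is an equality and $R(p)=\frac{n+4}3$.

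The consequence then reduces to the classification of eikonal cubics, which we take from the literature cited in \Cref{sec:hsiang}. A cubic solving $|Dp|^2=c|x|^4$ with $c>0$ is harmonic if and only if it is, up to rotation and scaling, Cartan's isoparametric cubic $\Cartan d$ with $n=3d+2$, $d\in\{1,2,4,8\}$, or $\re z^3$ for $n=2$. Harmonicity comes from Münzner's condition: for $g=3$ isoparametric families with equal multiplicities, the Laplacian vanishes. Note also that a cubic satisfying $|Dp|^2=c|x|^4$ automatically has $\|p\|_\infty^2=c/9$ via Kellogg. Hence the bound is attained precisely when $n\in\{2,5,8,14,26\}$, which gives $\kappa_n=\frac{n+4}3$ there. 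For all other $n$, no eikonal harmonic cubic exists, so $R(p)>\frac{n+4}3$ for every $p$; by compactness $\kappa_n>\frac{n+4}3$. In particular $\kappa_5=3$.

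The main obstacle is the classification step, namely that the only eikonal cubics are Cartan's (Cartan's theorem on isoparametric cubics, with $n=3d+2$). We would simply cite it rather than reprove it. One small point to check: the eikonal constant must match $9\|p\|_\infty^2$. This follows because on $S^{n-1}$ the tangential gradient vanishes at a maximum point, so the radial derivative $3p$ accounts for all of $|Dp|$ there.
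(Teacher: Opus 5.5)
Your argument is correct and is the paper's proof. It uses Green's formula for the Dirichlet energy and Kellogg's inequality \eqref{kellogg} integrated against $\int_B|x|^4\,dx=|S|/(n+4)$; equality forces the eikonal equation by homogeneity, and then you apply the classification.

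One correction to your closing remark: the eikonal cubics are not only Cartan's. The classification the paper cites also contains $x_n^3-3x_n(x_1^2+\dots+x_{n-1}^2)$, i.e.\ $T_3(\scal x{e_n})$ on the sphere, which the paper discards because its Laplacian is $-6(n-2)x_n$. Since $p$ is harmonic by hypothesis, your direct appeal to Cartan's classification of harmonic eikonal (isoparametric) cubics already sidesteps this case.
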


Theorem~A settles the problem only in those dimensions in which a harmonic cubic can attain Kellogg's bound at every point of the sphere. In all other dimensions the extremal cubics are flat in a weaker sense, and a finer structure is needed to describe them. Hsiang cubics are the natural candidates: they attain Kellogg's bound on whole spheres through each idempotent (\Cref{pro:firstint}). The remaining results describe this structure from four points of view: equioscillation and the exact value of the ratio, the first variation, where stationarity becomes a quadrature problem on the idempotent variety, the existence of such quadratures, and the local versus global behaviour of $R$.

Throughout, a \emph{Hsiang cubic} is the cubic form $u(x)=\frac16\scal{x^2}{x}$ of a non-trivial Hsiang algebra $\alg$ on $\R{n}$, normalized as in \Cref{sec:hsiang}, so that
\begin{equation}\label{Mnorm}
M:=\max_{S^{n-1}}u=\frac1{3\sqrt3}=\frac1{6|c|},\qquad |c|^2=\frac34\quad\text{for every } c\in\Idm(\alg).
\end{equation}
The constant $M$ is the level of least deviation in all results below. We write $\hat c=c/|c|$ for $c\in\Idm(\alg)$, $L(x)y=xy$ for the multiplication operators, and
\begin{equation}\label{killing}
\Kill=\sum_{i=1}^nL(e_i)^2,\qquad \scal{Ax}{y}=\trace L(x)L(y),
\end{equation}
for the \emph{Killing operator} of $\alg$, where $\{e_i\}$ is any orthonormal basis. Its trace $\trace \Kill=\sum_{i,j}|e_ie_j|^2$ is the sum of the squares of all structure constants of $\alg$; it measures ``how much multiplication'' the algebra has. The next result makes the word ``Chebyshev'' literal: the alternation set of $u$ is its idempotent variety, and the classical Chebyshev polynomial $T_3$ appears on great circles through it.

\begin{theoremB}[Equioscillation and the Chebyshev ratio; \Cref{sec:equi,sec:problem}]
Let $u$ be a Hsiang cubic on $\R{n}$ with algebra $\alg$. Then:
\begin{enumerate}
\item[(i)] $u$ attains the value $M$ exactly on $\hIdm=\{\hat c:\ c\in\Idm(\alg)\}$, and $u|_{S^{n-1}}$ has no other local maxima;
\item[(ii)] $u=M\cos3\varphi=M\,T_3(\cos\varphi)$ on every great circle through $\hat c$ in a direction of the Peirce space $\alg_c(-1)$;
\item[(iii)] the Chebyshev ratio of $u$ is $R(u)=\dfrac{2N_n}{9\trace \Kill}$, where $\Kill$ is the Killing operator \eqref{killing}.
\end{enumerate}
\end{theoremB}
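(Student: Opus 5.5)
The plan is to work entirely inside the algebra $\alg$. We use $u(x)=\frac16\scal{x^2}{x}$, so $Du=\frac12x^2$, and the degree-four Hsiang identity in the normalization of \Cref{sec:hsiang}. We also use the standard facts about Hsiang algebras recalled there: idempotents satisfy $|c|^2=\frac34$, and the Peirce spectrum of $L(c)$ is $\{1,-1,-\frac12,\frac12\}$ with $\alg_c(1)=\R{}c$.

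\emph{Step 1, part (i).} Critical points of $u$ on $S^{n-1}$ are the unit $x$ with $x^2=\lambda x$. Evaluating $\scal{x^2}{x}$ gives $\lambda=6u(x)$. If $u(x)>0$, then $c=x/\lambda$ is an idempotent, $|c|=1/\lambda$, and hence $u(x)=\frac1{6|c|}=M$. So every positive critical value equals $M$, and the points where it is attained are exactly $\hIdm$. To rule out other local maxima, we compute the spherical Hessian at a critical point $x=\lambda c$. On $x^\perp$ it is $\lambda\bigl(L(c)-\frac12\bigr)$, up to the positive factor coming from $D^2u=L(x)$ and the Lagrange multiplier $\frac12\lambda$. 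Its eigenvalues are therefore $\lambda(\mu-\frac12)$ for $\mu\in\{-1,-\frac12,\frac12\}$, so it is negative semidefinite with degenerate directions $\alg_c(\frac12)$. A local maximum at a critical point with $u\le0$ is impossible, since $\max u=M>0$ and $u$ is odd. Such a point would need Hessian $\le0$, but $\lambda\le0$ forces the $\alg_c(-1)$ directions to be positive whenever $\alg_c(-1)\neq0$. Non-triviality should guarantee this, and we will check the $\lambda=0$ case separately: there $x^2=0$, and a perturbation argument with $u(x+ty)=\frac{t^2}{2}\scal{xy}{y}+\dots$ shows $x$ is not a local maximum.

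\emph{Step 2, part (ii).} Take $v\in\alg_c(-1)$ with $|v|=|c|$ and set $x(\varphi)=\cos\varphi\,\hat c+\sin\varphi\,\hat v$. Expanding,
\[
u=\tfrac16\bigl(\cos^3\!\varphi\,\scal{\hat c^2}{\hat c}+3\cos\varphi\sin^2\!\varphi\,\scal{\hat c\hat v}{\hat v}+3\cos^2\!\varphi\sin\varphi\,\scal{\hat c^2}{\hat v}+\sin^3\!\varphi\,\scal{\hat v^2}{\hat v}\bigr).
\]
Three of the coefficients are immediate. We have $\scal{\hat c\hat v}{\hat v}=-|c|^{-1}$ and $\scal{\hat c^2}{\hat c}=|c|^{-1}$, while $\scal{\hat c^2}{\hat v}=0$ by orthogonality of the Peirce spaces. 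The remaining input is $\scal{v^2}{v}=0$ for $v\in\alg_c(-1)$. I expect this to follow from the Peirce multiplication rules: $\alg_c(-1)^2\subset\alg_c(1)\oplus\alg_c(\frac12)$, or at least $\scal{v^2}{v}=\scal{v}{v^2}$ is forced to vanish by invariance together with $v^2$ having no $\alg_c(-1)$ component. This gives $u=M(\cos^3\varphi-3\cos\varphi\sin^2\varphi)=M\cos3\varphi$. I expect this Peirce fact to be the main obstacle, and I will try to derive it by polarizing the Hsiang identity at $x=c+tv$.

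\emph{Step 3, part (iii).} Since $\|u\|_\infty=M$, we have $R(u)=M^2/\|u\|_2^2$. Use $R(p)=\frac{N_n}{6}(\|p\|_\infty/\|p\|_B)^2$ together with $\|u\|_B^2=\frac1{36}\trace\Kill$. The second formula follows directly, since the symmetric tensor is $S(x,y,z)=\frac16\scal{xy}{z}$, whose Frobenius norm squared is $\frac1{36}\sum_{i,j}|e_ie_j|^2$. Substituting $M^2=\frac1{27}$ gives
\[
R(u)=\frac{N_n}{6}\cdot\frac{36}{27\trace\Kill}=\frac{2N_n}{9\trace\Kill}.
\]
Harmonicity of $u$ (\Cref{the:exact}) is what justifies using the Bombieri form of $R$.
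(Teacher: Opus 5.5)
Steps 2 and 3 follow the paper's route, and Step 3 is the paper's computation, since $\|u\|_B^2=\frac16\Fs uu$. However, Step 1 contains a claim that is false and a case that is left incomplete, and Step 2 leaves its key input unproved.

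\textbf{Step 1.} For a critical point $x=\lambda c$ with $\lambda<0$, you rely on the $\alg_c(-1)$ directions and assert that non-triviality forces $\alg_c(-1)\neq0$. This is false: $x_1x_2x_3$ has Peirce profile $(0,2,0)$, and $\Pol(1,4)$ and $\det X$ also have $n_{-1}=0$. You can repair this inside your own framework. The Hessian $\lambda(L(c)-\frac12)$ also has the eigenvalue $-\lambda>0$ on $\alg_c(-\frac12)$. Moreover, $\alg_c(-1)\oplus\alg_c(-\frac12)\neq0$, because $\trace L(c)=0$ while $L(c)c=c$. Alternatively, as in the paper, $x=-\hat c$ is a global minimum, so it could be a local maximum only if $u|_S$ were locally constant.

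The case $\lambda=0$ is exactly where harmonicity is indispensable, and your perturbation sketch never uses it. For a non-harmonic cubic such as $-x_1^2x_2$, the point $e_2$ is a local maximum of $u|_{S^1}$ with value $0$. To complete your argument: $x^2=0$ gives $u(x+ty)=\frac{t^2}{2}\scal{L(x)y}{y}+t^3u(y)$. Since $\trace L(x)=\Delta u(x)=0$ by \Cref{the:exact}, there are two cases:
\begin{itemize}
\item if $L(x)$ has a positive eigenvalue, take $y$ an eigenvector and small $t\neq0$;
\item if $L(x)=0$, take any $y$ with $u(y)>0$ and $t>0$.
\end{itemize}
In either case $u>0$ at points arbitrarily close to $x$. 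This is the local substitute for the strong maximum principle used in the paper.

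\textbf{Step 2.} The fact you call the main obstacle is already part of \Cref{the:peirce}: $y_1y_2=-\frac43\scal{y_1}{y_2}c$ for $y_1,y_2\in\alg_c(-1)$. Hence $v^2\in\R{}c$, which is orthogonal to $v$, so $\scal{v^2}{v}=0$ at once. The paper uses exactly this, through the subalgebra $\R{}c\oplus\R{}\eta$.

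Your polarization plan would also have worked. At $x=c+tv$, compare the $t^3$-coefficients on both sides of the normalized identity $\scal{x^2}{x^3}=\frac43|x|^2\scal{x^2}{x}$. The left side gives $6\scal{v^2}{v}$ and the right side gives $\frac43|c|^2\scal{v^2}{v}=\scal{v^2}{v}$, which forces $\scal{v^2}{v}=0$. With this fact supplied, your trilinear expansion correctly yields $M\cos3\varphi$.
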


For a general cubic form neither $\|p\|_\infty$ nor its maximum set is accessible: computing the spectral norm of a symmetric $3$-tensor is NP-hard \cite{Hillar-Lim}. For a Hsiang cubic both are given by the algebra: the level is $M$, the alternation set is $\Idm(\alg)$, and $R(u)$ is a rational number determined by the Killing trace. In this sense the Hsiang cubics are model examples for the Chebyshev problem.

As in the classical theory, the alternation set is what makes the first variation of the sup-norm computable. For Hsiang cubics this turns stationarity into a positivity condition on the idempotent variety, i.e.\ into a moment problem on $\Idm(\alg)$. Recall that the harmonic part of the cube $\scal xa^3$ is the zonal harmonic $Z_a$, see \eqref{zonal}. A positive measure $\mu$ on $\Idm(\alg)$ is called a \emph{Kolmogorov measure} of $u$ if $u$ is the $\mu$-average of the zonal harmonics $Z_{\hat c}$, and a \emph{representing measure} of $u$ if $u$ is the $\mu$-average of the cubes themselves, $u(x)=\int\scal x{\hat c}^3d\mu(c)$; in the language of the truncated moment problem, the cubic moment tensor of $\mu$ is then $\frac16\Tens$, where $\Tens(x,y,z)=\scal{xy}z$. Since $u$ is harmonic, the trace parts of the cubes average to zero, so every representing measure is a Kolmogorov measure (\Cref{sec:waring}).

\begin{theoremC}[Kolmogorov criterion and idempotent quadrature; \Cref{sec:kolm,sec:waring}]
Let $u$ be a Hsiang cubic on $\R{n}$ with algebra $\alg$. Then:
\begin{enumerate}
\item[(i)] $u$ is a stationary point of the Chebyshev ratio $R$ on $\hcub$ if and only if $u$ admits a Kolmogorov measure;
\item[(ii)] every representing measure $\mu$ of $u$ has mass $\frac{|c|}6\trace \Kill$, first moment $0$, second moment $\frac{|c|}6\Kill$ and third moment $\frac16\Tens$, where $\Kill$ is the Killing operator \eqref{killing};
\item[(iii)] in particular, $\mu/\mu(\Idm(\alg))$ is a spherical $2$-design if and only if $\alg$ is an Einstein Hsiang algebra.
\end{enumerate}
\end{theoremC}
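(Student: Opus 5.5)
The plan splits into the three parts of Theorem~C. Part~(i) is a Kolmogorov-type criterion for the sup-norm on the finite-dimensional space $\hcub$. Parts~(ii)--(iii) are moment computations that use harmonicity of $u$ and the algebra identities.

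\emph{Part (i).} Normalize $\|u\|_2$ and use Theorem~B(i): the maximum of $|u|$ on $S^{n-1}$ is attained exactly on $\pm\hIdm$, with $u=M$ on $\hIdm$ and $u=-M$ on $-\hIdm$. By the classical directional-derivative formula for a max over a compact set (Danskin), for $h\in\hcub$
\[
\frac{d}{dt}\Big|_{t=0^+}\|u+th\|_\infty=\max_{c\in\Idm(\alg)}h(\hat c),
\]
because $u$ is odd and so the contributions of $-\hIdm$ give the same values. The quadratic norm is differentiable, with derivative $2\scal{u}{h}_{L^2}$. Stationarity of $R$ at $u$ therefore means that for every $h$,
\[
\max_{c} h(\hat c)\ \ge\ M\,\frac{\scal uh_{L^2}}{\|u\|_2^2},
\]
and applying this to $\pm h$ gives a two-sided condition. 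The set $\{(h(\hat c))_h : c\in \Idm\}$ is a compact family of linear functionals on $\hcub$. The condition above says the functional $h\mapsto M\scal uh/\|u\|_2^2$ lies in the closed convex hull of the point evaluations $\delta_{\hat c}|_{\hcub}$. This is the Hahn--Banach/Kolmogorov separation step.

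Next, apply the reproducing kernel of $\hcub$: the evaluation $h(\hat c)$ equals $\scal{h}{Z_{\hat c}}_{L^2}$ up to a fixed constant. Membership in the convex hull then says precisely that $u$ is a positive combination of the $Z_{\hat c}$, that is, $u=\int Z_{\hat c}\,d\mu$. Carathéodory's theorem gives finite atomic measures, and weak-$*$ compactness handles closedness. Conversely, such a $\mu$ reverses the inequality by integrating $h(\hat c)$ against $\mu$. I expect this part to be routine.

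\emph{Part (ii).} Start from a representing measure, $u(x)=\int\scal x{\hat c}^3d\mu$. Polarization gives the third moment
\[
\int \hat c^{\otimes3}\,d\mu=\tfrac16\Tens .
\]
To obtain lower moments, contract with the metric. Taking the trace in two slots gives $\int\hat c\,d\mu=\frac16\sum_i e_ie_i=\frac{1}{6}\cdot$(trace vector), which is proportional to $D\Delta u$ and hence zero because $u$ is harmonic (Theorem of \Cref{sec:hsiang}). The second moment and the mass are not obtained by contraction. Instead, I would use the idempotent relation $\hat c\hat c=c/|c|^2=\hat c/|c|$, which says that $\Tens(\hat c,\hat c,\cdot)=\scal{\hat c}{\cdot}/|c|$. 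Contract the third-moment identity with $\Tens$ in two slots: write $\scal{\hat c\hat c}{z}$ and integrate. This gives
\[
\int \scal{\hat c}{x}\scal{\hat c}{y}\,d\mu\cdot\frac1{|c|}=\frac16\sum_{i,j}\Tens(e_i,e_j,x)\Tens(e_i,e_j,y)=\frac16\scal{\Kill x}{y}.
\]
That is the second moment $\frac{|c|}6\Kill$. Taking its trace gives the mass $\frac{|c|}{6}\trace\Kill$, using $|\hat c|=1$. The main subtlety is the constant bookkeeping in this contraction. One also needs $|c|$ to be constant on $\Idm(\alg)$, which holds by \eqref{Mnorm}.

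\emph{Part (iii).} A probability measure on the sphere is a $2$-design iff its first moment vanishes and its second moment is $\frac1n\Id$. The first moment already vanishes. The normalized second moment is $\Kill/\trace\Kill$, and this equals $\frac1n\Id$ iff $\Kill$ is scalar, which is the Einstein condition. Finally, the remark before the theorem identifies representing measures as Kolmogorov measures, so the two parts of the theorem are consistent.
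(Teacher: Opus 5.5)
Your proposal is correct, and for (i) and (iii) it follows the paper's route: Danskin's formula on $\pm\hIdm$, separation from the compact convex hull of the evaluations $h\mapsto h(\hat c)$, the reproducing kernel $K_a=\frac{N_n}6Z_a$, and the moment characterization of $2$-designs. In (i) you differ only slightly. You keep the $u$-component of $h$ and place the functional $h\mapsto M\scal uh/\|u\|_2^2$ itself in the convex hull, which gives a \emph{positive} multiple of $u$ at once. The paper instead restricts to $h\perp u$, obtains $0\in\conv\mathcal K$, and then needs a separate argument for $\gamma>0$ based on $u(\hat c)=M$. One wording point: the contributions of $\pm\hat c$ agree because $h$ is odd, not because $u$ is. The genuine difference is in (ii). You contract the third-moment identity once against $\Tens$ and use $\Tens(\hat c,\hat c,\cdot)=\scal{\hat c}{\cdot}/|c|$ on the support. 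This gives $\frac1{|c|}\int\hat c\otimes\hat c\,d\mu=\frac16\Kill$ immediately, and the mass follows as the trace. The paper first proves the equilibrium identity \eqref{equil}. It then substitutes the representation into \emph{both} factors of $\trace L(x)L(y)$ and collapses one integral with \eqref{equil}, which in effect undoes one of the two substitutions. It takes the mass from \eqref{mass}, i.e.\ by comparing $\|u\|_2^2=\frac{6M}{N_n}\mu(\Idm(\alg))$ with $\|u\|_2^2=\trace\Kill/(6N_n)$ from \Cref{the:ratio}. Your version is shorter and purely algebraic: for the second moment and the mass it uses neither harmonicity nor \Cref{lem:fischer}, only that $\mu$ lives on $\Idm(\alg)$, where $|c|$ is constant. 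The paper's version gives the equilibrium identity \eqref{equil} as a by-product. It also establishes the mass already for every Kolmogorov measure, tying it to $\|u\|_2^2$ and hence to $R(u)$.
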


A representing measure is thus a quadrature identity for $u$ with nodes on $\Idm(\alg)$, and the analogy with quadrature domains extends to the shape of the measures. A zonal harmonic $Z_a$ has a one-point quadrature, the analogue of the mean value property of the disk (\Cref{lem:zonal}). Hsiang cubics with a Steiner decomposition have quadratures with finitely many nodes (\Cref{sec:psts}). The uniform measures of \Cref{sec:existence} (Theorem~D below) are spread over the whole idempotent variety, which then plays the role of a \emph{mother body}, or skeleton, of $u$, as the focal segment does for an ellipse \cite{Gustafsson-motherbodies,Gustafsson-Shapiro}.

Theorem~C thus reduces stationarity to the existence of a positive measure on $\Idm(\alg)$ with prescribed moments. By the classification, a Hsiang algebra is either polar or exceptional, and the latter occur only in finitely many dimensions. For the infinite family of polar algebras the moment problem has an explicit uniform solution. For the exceptional algebras it is solved either by symmetry or by a finite combinatorial quadrature, and together these cover all exceptional Hsiang algebras known to exist.

\begin{theoremD}[Existence of representing measures; \Cref{sec:existence}]
Let $u$ be a Hsiang cubic on $\R{n}$ with algebra $\alg$.
\begin{enumerate}
\item[(i)] If $\alg\in\Pol(m_0,m_1)$ is polar, with polar decomposition $\mathbb V_0\oplus\mathbb V_1$, then the uniform measure on the sphere bundle $\Idm(\alg)$ is, up to a positive factor, a representing measure of $u$. In particular, $u$ is stationary, $\Kill=m_1\Id_{\mathbb V_0}\oplus2m_0\Id_{\mathbb V_1}$, and
$$
R(u)=\frac{2N_n}{27\,m_0m_1}.
$$
\item[(ii)] If the group $G$ of orthogonal automorphisms of $\alg$ acts transitively on $\Idm(\alg)$ and every $G$-invariant cubic form is a multiple of $u$, then the $G$-invariant probability measure on $\Idm(\alg)$ is, up to a positive factor, a representing measure of $u$. This applies to Cartan's cubics $\Cartan d$, to $\det X$ on $M_3(\R{})$ and to $\re\det z$ on the space $\mathrm{Sym}_3(\mathbb C)\cong\R{12}$ of complex symmetric matrices.
\item[(iii)] If $u$ is a sum $\sum_b\pm x_{b_1}x_{b_2}x_{b_3}$ of monomials over the blocks of a partial Steiner triple system (in a suitable orthonormal basis), then $u$ has a representing measure with equal atoms at $4|B|$ explicit idempotents. This applies to the mutants, to $\det X$, to the $21$-dimensional exceptional algebra $\mathrm{trip}(\operatorname{Im}\mathbb O)$ and, more generally, to every exceptional Hsiang algebra with $n_{-1/2}\ne0$ known to exist, with the single exception of the $12$-dimensional algebra in \textup{(ii)}, which has no Steiner decomposition, as announced in \cite{HsiangStrongRigidity}.
\end{enumerate}
Consequently, every Hsiang cubic known to exist is stationary. Stationarity is open only for hypothetical exceptional Hsiang algebras whose Peirce profiles are admissible but neither realized nor excluded; these lie in finitely many dimensions.
\end{theoremD}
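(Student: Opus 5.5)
Throughout, I use Theorem~C: a representing measure is automatically a Kolmogorov measure, so $u$ is stationary, and its mass and second moment are given by Theorem~C(ii). For each of (i)--(iii) it therefore suffices to produce a positive measure $\mu$ on $\Idm(\alg)$ with $\int\scal x{\hat c}^3\,d\mu(c)=c_0\,u(x)$ for some $c_0>0$; rescaling $\mu$ then gives a representing measure.

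\emph{Part (ii): invariance.} Let $\nu$ be the $G$-invariant probability measure on the single orbit $\Idm(\alg)$, and set $q(x)=\int\scal x{\hat c}^3\,d\nu(c)$. This $q$ is a $G$-invariant cubic form, so by hypothesis $q=c_0u$. To get $c_0>0$, evaluate at a normalized idempotent $\hat c$ and use transitivity: $c_0M=q(\hat c)=\int\scal{\hat c}{\hat c'}^3\,d\nu(c')$. By Theorem~B(i), $\scal{\hat c}{\hat c'}\le1$ with equality only at $c'=c$, but a sign argument is still needed. I would pair instead with $u$ itself. Since $\int u(\hat c)\,d\nu=M$, Fischer duality (\Cref{lem:fischer}) gives $\langle q,u\rangle_B=\text{const}\cdot\int u(\hat c)\,d\nu>0$, while also $\langle q,u\rangle_B=c_0\|u\|_B^2$. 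Hence $c_0>0$. For the listed examples, the remaining work is to check that the invariant cubics form a one-dimensional space: for $SO(3)$ acting on $\Cartan d$ this is classical; for $\det$ on $M_3$ use $SL_3\times SL_3$ restricted to $SO_3\times SO_3$; for $\re\det$ on $\mathrm{Sym}_3(\mathbb C)$ use $SU(3)$ acting by $z\mapsto gzg^T$. In each case transitivity on the idempotents follows from diagonalization (an SVD or Takagi factorization).

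\emph{Part (i): polar case.} This reduces to (ii)-type reasoning with the symmetry group $O(\mathbb V_0)\times O(\mathbb V_1)$ of the polar structure, together with an explicit computation. I would use the explicit model of $\Pol(m_0,m_1)$, in which $\Idm(\alg)$ is the sphere bundle. I would average $\scal x{\hat c}^3$ over the fibres and base. Odd moments in the fibre variable vanish, and the surviving terms are the $\mathbb V_0$-cubic part and the mixed $\mathbb V_0\mathbb V_1^2$ part. I would then match these with the structure constants of $u$. The formula for $\Kill$ is then either computed directly from the model or read off from Theorem~C(ii) as the second moment. Finally, $R(u)$ follows from Theorem~B(iii) using $\trace\Kill=m_0m_1+2m_0m_1=3m_0m_1$. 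The main obstacle is this fibre averaging: the idempotent bundle is not a homogeneous space of a group under which $u$ is the only invariant, so the matching of the two coefficients must be verified by hand.

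\emph{Part (iii): Steiner case.} For a single block monomial $x_1x_2x_3$, the identity
\[
\sum_{\epsilon_2,\epsilon_3=\pm1}\epsilon_2\epsilon_3\,(x_1+\epsilon_2x_2+\epsilon_3x_3)^3=24\,x_1x_2x_3
\]
shows that this monomial is represented by four sign patterns. After absorbing signs into $\pm$ directions, these can be written as positive cubes $\scal x{v}^3$, using $(-v)^3=-v^3$. These four points have to be identified as normalized idempotents $\hat c$, which I would do by checking $c^2=c$ for $c$ proportional to $e_{b_1}\pm e_{b_2}\pm e_{b_3}$ with suitable signs. This check uses the Steiner condition: two blocks share at most one index, so the cross terms vanish. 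Summing over the blocks $b\in B$ gives equal atoms at $4|B|$ points. The listed applications would then be obtained by exhibiting the Steiner bases, citing \cite{HsiangStrongRigidity} for the nonexistence statement in dimension~$12$.
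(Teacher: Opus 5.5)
Your architecture matches the paper's: an explicit fibre-then-base average over the Clifford sphere bundle for (i), averaging over $G$ for (ii), and the tetrahedral identity on each block for (iii). The abstract part of (ii) is correct. Pairing $q$ with $u$ through $\Fs{u}{\scal xa^3}=6u(a)$ is the paper's $L^2$ pairing in Fischer form, and it makes the paper's step through the absence of $G$-fixed vectors unnecessary. Your block identity and idempotent check in (iii) are exactly the paper's.

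\textbf{The $12$-dimensional algebra.} The genuine gaps lie in verifying hypothesis (b) for the listed examples, and the worst is this case. For $g\in SU(3)$ one has $\det(gzg^T)=(\det g)^2\det z=\det z$, so $\operatorname{Im}\det z$ is also an $SU(3)$-invariant cubic. With $SU(3)$ alone you only get $q=\alpha\re\det z+\beta\operatorname{Im}\det z$. You need a further orthogonal automorphism, the conjugation $z\mapsto\bar z$, which fixes $\re\det z$ and reverses $\operatorname{Im}\det z$. The one-dimensionality itself also needs an argument, not just transitivity: the Takagi reduction to $\mathrm{diag}(a_1,a_2,a_3)$ plus invariance under $a_k\mapsto e^{i\psi_k}a_k$ with $\sum_k\psi_k=0$. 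This case carries weight, since it is the one known algebra with $n_{-1/2}\ne0$ that (iii) does not cover.

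\textbf{Cartan's cubics and $\det X$.} $SO(3)$ is the right group for $\Cartan d$ only when $d=1$. For $d\ge2$ the Veronese variety $\Idm(\alg)$ has dimension $2d\ge4>\dim SO(3)$, so (a) fails. You need the automorphism group of the Jordan algebra $\mathscr H_3(\mathbb K_d)$, which is transitive on primitive idempotents and whose invariants are generated by $\trace X,\trace X^2,\trace X^3$. For $\det X$, one-dimensionality of the $SL_3\times SL_3$-invariant cubics does not pass to the subgroup $SO_3\times SO_3$: a smaller group may have more invariants, and $SL_3\times SL_3\not\subset G$. The actual argument is the SVD reduction. An invariant cubic is determined by $p(\mathrm{diag}(s_1,s_2,s_3))$, which is symmetric and invariant under simultaneous sign changes of two $s_i$, hence a multiple of $s_1s_2s_3$.

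\textbf{Part (i).} This is announced rather than proved, and one remark is wrong. $O(\mathbb V_0)\times O(\mathbb V_1)$ does not preserve $u$, and after the base average the $\mathbb V_0$-cubic term must vanish rather than be matched. Two things are missing.
\begin{enumerate}
\item The identification of $\hIdm$ with the sphere bundle. It follows from $u(y,z)\le\frac12|y||z|^2=\frac12t(1-t^2)\le\frac1{3\sqrt3}$ on $S$ with $t=|y|$, its equality cases, and Theorem~B(i).
\item The two averaging identities that do all the work. Write $\scal x{\hat c}=a+b\scal z\zeta$ with $a=\frac1{\sqrt3}\scal y\eta$ and $b=\sqrt{2/3}$. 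The fibre average is governed by $\int_{S_\eta}\scal z\zeta^2\,d\sigma_\eta=\frac1\ell|\Pi(\eta)z|^2$, where $\Pi(\eta)=\frac12(\Id+P(\eta))$ and $\ell=\frac{m_1}2$; this is where the Clifford system enters. On the base, $\scal y\eta^3$ and $\scal y\eta|z|^2$ vanish under $\eta\mapsto-\eta$. Then $\int\eta_i\eta_j\,d\sigma_{m_0}=\delta_{ij}/m_0$ turns $\scal y\eta\scal{P(\eta)z}z$ into $\frac1{m_0}\scal{P(y)z}z=\frac2{m_0}u$, which yields the factor $\frac{\sqrt3}4m_0m_1$.
\end{enumerate}
The same identities give $\int\hat c\otimes\hat c\,d\nu=\frac1{3m_0}\Id_{\mathbb V_0}\oplus\frac2{3m_1}\Id_{\mathbb V_1}$, from which $\Kill$ follows by the second-moment formula.

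\textbf{Part (iii) and the final clause.} The Steiner bases for the mutants, $\det X$ and $\mathrm{trip}(\operatorname{Im}\mathbb O)$ have to be exhibited. In standard bases the blocks are given by permutations for $\det X$, by the nonzero $\re((e_ie_j)e_l)$ for the mutants, and by the nonzero $\scal{e_i\times e_j}{e_l}$ for the tripling. The ``consequently'' clause also needs the list of known exceptional algebras from the cited classification.
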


Stationarity is only a first-order condition, which the Hsiang cubics share with the maximizers of $R$, the zonal harmonics. Whether a stationary Hsiang cubic is a genuine local minimizer depends on the harmonic cubics $h$ that vanish on $\hIdm$, where the first variation is zero. Infinitesimal rotations of $u$ are always among them; if they are the only ones, $u$ is a strict local minimizer, and otherwise $R$ may decrease at second order. The answer depends on the algebra.

\begin{theoremE}[Local minimizers and saddles; \Cref{sec:local}]
Local minimality of a Hsiang cubic depends on the algebra:
\begin{enumerate}
\item[(i)] The cubic $x_1x_2x_3$ on $\R3$ ($\Pol(1,2)$) and the Lawson cubic on $\R4$ ($\Pol(2,2)$) are strict local minimizers of $R$ on $\hcub$ modulo rotations and scaling. (On $\R5$, Cartan's cubic $\Cartan1$ is even the global minimizer by Theorem~A.)
\item[(ii)] The cubic $u=\frac12v_0(y_1^2+y_2^2-y_3^2-y_4^2)$ of $\Pol(1,4)$ on $\R5$ is stationary but not a local minimizer: $R(\cos\theta\,u+\sin\theta\,u_L)<R(u)$ for all $\theta\notin\frac\pi2\mathbb Z$, where $u_L$ is the Lawson cubic ($\Pol(2,2)$) on the hyperplane $e_0^\perp\subset\R5$.
\end{enumerate}
\end{theoremE}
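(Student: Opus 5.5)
We work with the normalized Hsiang cubics of Theorem~B and reduce both parts to second-order expansions of $R$ along explicit curves in $\hcub$. The basic tool is Theorem~B(i): the maximum of $u$ on $S^{n-1}$ equals $M$ and is attained exactly on $\hIdm$. By Theorem~D(i) each cubic in question is stationary.

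\emph{Expansion near $u$.} Fix a harmonic $h\perp u$ in $L^2$. For $p_t=u+th$ we have $\|p_t\|_2^2=\|u\|_2^2+t^2\|h\|_2^2$. The maximum of $|p_t|$ on the sphere sits near $\hIdm$. There is one caveat: $-u$ also reaches the value $M$ at $-\hIdm$, and since $p_t$ is odd this case is the same one. Expanding around a point $\hat c$ and its neighbourhood in the sphere gives
\[
\|p_t\|_\infty=M+t\max_{\hIdm}h+\tfrac{t^2}{2}\max\nolimits'\bigl(\langle Q_c^{-1}\nabla_T h,\nabla_T h\rangle\bigr)+o(t^2).
\]
Here $Q_c$ is minus the tangential Hessian of $u|_S$ at $\hat c$. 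By the Peirce decomposition of $L(c)$ (Theorem~B(i)), $Q_c$ is nondegenerate on the normal directions to $\hIdm$. The symbol $\max'$ is taken over the points where $h|_{\hIdm}$ attains its maximum.

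\emph{Part (i).} We first find the harmonic cubics $h$ that vanish on $\hIdm$. For $x_1x_2x_3$ the set $\hIdm$ consists of the 4 points $(\pm1,\pm1,\pm1)/\sqrt3$ with an even number of minus signs. For the Lawson cubic it is a union of circles; for $\Pol(m_0,m_1)$ it is the sphere bundle $S^{m_0-1}\times S^{m_1-1}$ suitably scaled. Since $\dim\hcub(\R3)=7$, we would check directly that the harmonic cubics vanishing at these 4 points, modulo $u$, are exactly the tangent vectors $[X,u]$ with $X\in\so(3)$, and that this space has dimension 3. We do the same for $\R4$, where $\dim\hcub=16$ and $\dim\so(4)=6$. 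For directions $h$ with $\max_{\hIdm}h>0$, the first-order term already increases $R$. For the remaining directions the key point is that $\max_{\hIdm}h\le0$ with $h\not\equiv0$ on $\hIdm$ forces $\max h>0$ somewhere on $\hIdm$. This follows from the Kolmogorov measure of Theorem~C: we have $\int h\,d\mu=\langle h,u\rangle\cdot$const$=0$, so a nonzero $h|_{\hIdm}$ must change sign. It remains to control the directions with $h|_{\hIdm}=0$ modulo rotations, and by the computation above these are zero. Hence $R(u+th)\ge R(u)+c|t|$ on the complement of the tangent space of the orbit. A compactness argument on the unit sphere of the normal slice then yields strict local minimality.

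\emph{Part (ii).} We take $u=\frac12v_0(y_1^2+y_2^2-y_3^2-y_4^2)$ and $u_L$, the Lawson cubic in $y_1,\dots,y_4$, and set $p_\theta=\cos\theta\,u+\sin\theta\,u_L$. These are orthogonal in $L^2$: $u$ is odd in $v_0$ and $u_L$ is even in $v_0$. We rescale so that $\|u\|_2=\|u_L\|_2$, using Theorem~D(i) for both ratios. Then $\|p_\theta\|_2$ is constant, and we must show that $\|p_\theta\|_\infty<\|u\|_\infty$ for $\theta\notin\frac\pi2\mathbb Z$. Writing $x=(\cos s)e_0+(\sin s)\,y$ with $|y|=1$, the function $p_\theta$ becomes
\[
\cos\theta\cos s\sin^2 s\,q(y)+\sin\theta\sin^3 s\,u_L(y),\qquad q(y)=\tfrac12\bigl(y_1^2+y_2^2-y_3^2-y_4^2\bigr).
\]
The expected mechanism is that $q$ attains its extremes $\pm\frac12$ exactly where $y$ lies in the $(y_1,y_2)$- or $(y_3,y_4)$-plane, and $u_L$ vanishes there. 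The maxima of the two summands are therefore attained at disjoint sets, and the maximum of the sum is strictly below the $\ell^2$-type combination. We would maximize explicitly: first in $y$, for fixed $|q|$, bounding $u_L$ in terms of $1-4q^2$ using the known form of the Lawson cubic; then in $s$, by one-variable calculus. The target is the comparison $\max_x p_\theta<\max_x u$, and equality of the norms at the endpoints needs the precise constant from $R(u)=R(u_L)$.

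\textbf{Main obstacle.} This last explicit two-stage maximization and the sharp inequality for every $\theta$ are the hard part, together with the rank computation in part (i) for the Lawson cubic.
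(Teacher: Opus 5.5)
\textbf{Part (i)} follows the paper's route (\Cref{the:strict}, \Cref{cor:strict}). A Kolmogorov measure forces $\max_{\hIdm}h>0$ for every $h\perp u$ that does not vanish on $\hIdm$. The identity $D=o(u)$ disposes of the remaining directions, and a slice argument handles rotations. A few points should be tightened.
\begin{itemize}
\item The sign-change step needs a Kolmogorov measure with $\supp\mu=\Idm(\alg)$, not merely the existence statement of Theorem~C. Here these are the four equal atoms for $x_1x_2x_3$, and arc length on the single idempotent circle of the Lawson cubic.
\item Your second-order expansion of $\|u+th\|_\infty$ is never used and can be dropped. The exact identity $(u+h)(\hat c)=M+h(\hat c)$ already gives $\|u+h\|_\infty\ge M+\delta\|h\|_2$ for all $h$ in the slice $W=u^\perp\cap o(u)^\perp$, which is the uniformity you need.
\item The passage from $W$ to a full neighbourhood uses that $(t,g,h)\mapsto t\,(u+h)\circ g$ is a submersion at $(1,\Id,0)$.
\item For the Lawson cubic the target is $\dim D=\dim o(u)=5$, not $\dim\so(4)=6$. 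The cubic is invariant under a circle of rotations, so the rank to verify is $16-5=11$.
\end{itemize}
(For $\Pol(m_0,m_1)$, $\hIdm$ is a bundle over $S^{m_0-1}$ with fibre $S^{m_1/2-1}$, not $S^{m_0-1}\times S^{m_1-1}$; this is not used.)

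\textbf{Part (ii)} has a genuine gap. Its entire content is the strict inequality $\|p_\theta\|_\infty<M$, which you leave open, and the mechanism you offer does not yield it. Disjointness of the maximum sets of $u$ and $u_L$ only gives $\max p_\theta<(|\cos\theta|+|\sin\theta|)M$, and this bound exceeds $M$. Two functions with disjoint maximum sets can both be close to $M$ at a common point, and then their combination exceeds $M$.

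What is needed is a pointwise \emph{joint} bound, the paper's \Cref{lem:joint}: $u^2+u_L^2\le M^2$ on $S^4$, with equality only where $u=0$ or $u_L=0$. Given this, $p_\theta=\re\bigl(e^{-i\theta}(u+iu_L)\bigr)\le|u+iu_L|\le M$. Equality would force $\arg(u+iu_L)=\theta$ at a point where $u$ or $u_L$ vanishes, which is impossible for $\theta\notin\frac\pi2\mathbb Z$. Compactness and oddness then give $\|p_\theta\|_\infty<M$.

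In your own coordinates this is a Cauchy--Schwarz step that eliminates $\theta$. Put $a=|z|^2$, $b=|w|^2$, $c=v_0^2$, with $a+b+c=1$. Maximizing over the phases of $z,w$ and the sign of $v_0$ gives
\[
\max p_\theta=\tfrac12\max_{(a,b,c)}\bigl(|\cos\theta|\sqrt c\,|a-b|+|\sin\theta|\sqrt a\,b\bigr).
\]
Pointwise, with $m=\max(a,b)$,
\[
4\,(u^2+u_L^2)=c\,(a-b)^2+\re(zw^2)^2\le c\,(a-b)^2+ab^2\le m^2(1-m)\le\tfrac4{27}=4M^2.
\]
The middle inequality holds because $(a-b)^2\le m^2$ and $ab^2\le m^2\min(a,b)$. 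Equality forces $m=\frac23$ and $(a,b,c)\in\{(\frac23,0,\frac13),(0,\frac23,\frac13),(\frac13,\frac23,0)\}$, and at these points $u_L=0$ or $u=0$. So no $\theta$-dependent two-stage maximization is required, and your orthogonality and normalization steps ($\Fs{u_L}{u_L}=\Fs uu$, $\max_{S^4}u_L=M$) then finish the proof.
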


\subsection{What is proved and what is conjectured}
Theorems A--E are proved in full. Theorem~A solves the global problem \eqref{kappa0} for $n=5,8,14,26$, and Theorems B--E give local information in the other dimensions: Theorem~E provides local evidence for \Cref{conj:kappa} below, according to which $\kappa_3=\frac{35}9$ and $\kappa_4=\frac{32}9$ are attained, up to rotations and scaling, by $x_1x_2x_3$ ($\Pol(1,2)$) and the Lawson cubic ($\Pol(2,2)$). For $n=3$ the result of Agrachev, Kozhasov and Uschmajew on general ternary cubics gives $\kappa_3\ge\frac52$ (\Cref{pro:aku}), better than the bound $\frac73$ of Theorem~A. The three central open questions suggested by our results are formulated in \Cref{sec:open}: whether every Hsiang algebra, including the hypothetical exceptional ones, admits a representing measure on its idempotent variety (\Cref{prob:waring}), whether the global minimizers of \eqref{kappa0} are Hsiang cubics (\Cref{prob:global}), and whether Hsiang cubics are characterized by equioscillation or stationarity (\Cref{prob:converse}).

\emph{Why Hsiang cubics.} Since the global minimizers of $R$ are known only in the dimensions of Theorem~A, one may ask why Hsiang cubics, rather than other cubic forms with a rich symmetric or combinatorial structure, should be singled out. The answer is the Kolmogorov criterion. The first variation of the uniform norm lives on the maximum set of $|u|$ on the sphere, so a quadrature identity for $u$ says something about $R$ only if its nodes are maximum points. Quadrature identities as such are cheap: every cubic form is a positive sum of cubes $\scal xa^3$, since $-\scal xa^3=\scal x{-a}^3$, and every cubic form with a Steiner decomposition (\Cref{sec:psts}) has one with nodes at the vertices of explicit regular tetrahedra. What distinguishes the Hsiang cubics is that their quadrature nodes are forced onto the alternation set: all idempotents have the same length, so every idempotent is a maximum point (Theorem~B). The classical situation is the same: every polynomial admits positive quadrature formulas, but only for $T_k$ are the nodes its alternation points. For cubic forms with a Steiner decomposition this becomes quantitative: $R(u)\ge N_n/(27|B|)$, with equality exactly when the tetrahedral nodes are maximum points, as they are for Hsiang cubics (\Cref{rem:nonhsiang}). Nor is the combinatorics alone sufficient: the sign decoration of the blocks, which is of a purely algebraic nature, matters as well \cite{HsiangStrongRigidity}.

\subsection{A second entrance: the \texorpdfstring{$\infty$}{infinity}-Laplacian}
The second term in \eqref{hsiangeq} is the $\infty$-Laplacian $\Delta_\infty u=\scal{D^2u\,Du}{Du}=\frac12\scal{Du}{D|Du|^2}$, the Euler--Lagrange operator of the $L^\infty$-variational problem of minimizing $\|Du\|_{L^\infty}$ discovered by G.~Aronsson \cite{Aronsson-extension,Aronsson-pde}, see also \cite{Jensen-lipschitz,Crandall-Evans-Gariepy,Aronsson-Crandall-Juutinen,Peres-Schramm-Sheffield-Wilson}. Since every non-trivial Hsiang cubic is harmonic (\Cref{the:exact}), \eqref{hsiangeq} reduces to the nonlinear eigenvalue equation $\Delta_\infty u=-\theta|x|^2u$: the non-trivial Hsiang cubics are exactly the \emph{harmonic $\infty$-eigencubics} of \Cref{def:infeig}. The eigenvalue is the squared Chebyshev level, $-\theta=54M^2$ with $M=\max_{S^{n-1}}u$ (\Cref{pro:infeig}), and by Kellogg's theorem the Chebyshev problem \eqref{kappa0} is itself a minimal Lipschitz problem in Aronsson's sense (\Cref{rem:kellogg}). This suggests deeper relations between Hsiang theory and $L^\infty$-variational problems, which we only touch upon in \Cref{sec:infty}.

\subsection{Organization of the paper}
\Cref{sec:hsiang} collects the few facts on Hsiang algebras that we need, with references to \cite[Chap.~6]{Nadirashvili-Tkachev-Vladuts} and \cite{Tkachev-book}: exactness, the equal length of idempotents, the Peirce decomposition, and the multiplication of $(-1)$-Peirce vectors. \Cref{sec:infty} treats the $\infty$-Laplacian and Kellogg's inequality, and \Cref{sec:cartanmin} proves Theorem~A. In \Cref{sec:equi} we prove the equioscillation property and in \Cref{sec:problem} we compute the Chebyshev ratio. \Cref{sec:kolm} contains the Kolmogorov criterion, \Cref{sec:waring} the moment identities and \Cref{sec:existence} the existence of representing measures. Local minimality is studied in \Cref{sec:local}, and \Cref{sec:open} contains concluding remarks and open problems.

\subsection{Acknowledgements}
The author thanks Bj\"orn Gustafsson for many discussions over the years and for the long-standing collaboration on quadrature domains and the meromorphic resultant \cite{Gustafsson-Tkachev-resultant,Gustafsson-Tkachev-multisheeted}. During the last years we discussed on several occasions potential-theoretic contexts in which Hsiang algebras and moments of higher degree polynomials could be relevant; these discussions were important in shaping the present work. The author is also grateful to Bj\"orn Gustafsson for drawing his attention to the work of J.~Peetre on cubic forms. Finally, the author is grateful to Khazhgali Kozhasov for discussions during his visit to Link\"oping in May 2026, on possible interactions between the theory of Hsiang algebras and tensor approximation, and on the Bombieri norm and related constructions.

\subsection{Notation}
\begin{tabbing}
\hspace{32mm}\=\hspace*{120mm}\kill
$\scal xy$, $|x|$\> the standard inner product and norm of $\R n$\\
$S=S^{n-1}$, $B$\> the unit sphere and the unit ball of $\R n$\\
$\sigma$\> the rotation invariant probability measure on $S$\\
$\|p\|_\infty$, $\|p\|_2$\> the uniform norm and the $L^2(S,\sigma)$-norm, see \eqref{Rdef}\\
$Du$, $D^2u$, $\Delta u$\> the gradient, the Hessian and the Laplacian of $u$\\
$\Delta_\infty u$\> the $\infty$-Laplacian $\scal{D^2u\,Du}{Du}$\\
$\mathscr P_3$, $\mathscr H_1$\> the cubic forms and the linear forms on $\R n$\\
$\hcub=\hcub(\R n)$\> the harmonic cubic forms on $\R n$\\
$R(p)$, $\kappa_n$\> the Chebyshev ratio and its minimum, see \eqref{Rdef}, \eqref{kappa0}\\
$N_n$\> the constant $n(n+2)(n+4)$, see \eqref{Nn}\\
$\Fs pq$, $\|p\|_B$\> the Fischer inner product and the Bombieri norm (\Cref{sec:problem})\\
$Z_a$, $K_a$\> the zonal harmonic and the reproducing kernel of $\hcub$, see \eqref{zonal}\\
$\Che_{3,m}$\> the cubic Chebyshev form in $m$ variables (\Cref{rem:aku})\\[1mm]
$\alg$, $\alg(u)$\> an algebra; the metrized algebra of a cubic form $u$\\
$L(x)$\> the multiplication operator $L(x)y=xy$\\
$\Tens$\> the multiplication tensor $\Tens(x,y,z)=\scal{xy}z$, see \eqref{prod}\\
$\Kill$\> the Killing operator, see \eqref{killing}\\
$\alg_c(t)$, $\mathsf{pr}(\alg)$\> the Peirce subspaces and the Peirce profile, see \eqref{peirce}, \eqref{profile}\\
$\theta$, $\lambda$, $M$\> the Hsiang weight, the $\infty$-eigenvalue and the Chebyshev level \eqref{Mnorm}\\
$\Pol(m_0,m_1)$\> the polar algebras of type $(m_0,m_1)$ (\Cref{def:polar})\\
$\Cartan d$\> Cartan's cubic on $\R{3d+2}$, $d\in\{1,2,4,8\}$
\end{tabbing}

\section{Preliminaries on Hsiang algebras}\label{sec:hsiang}

\subsection{Metrized algebras}
By an algebra we mean a finite-dimensional real vector space $\alg$ with a bilinear multiplication $(x,y)\mapsto xy$; neither commutativity nor associativity is assumed in general. For $x\in\alg$ the multiplication operator is $L(x)y=xy$.

\begin{definition}\label{def:metrized}
A bilinear form $Q$ on an algebra $\alg$ is called \emph{invariant} if $Q(xy,z)=Q(x,yz)$ for all $x,y,z\in\alg$. A \emph{metrized algebra} is a pair $(\alg,Q)$, where $\alg$ is a commutative algebra and $Q$ is a nondegenerate symmetric invariant bilinear form on $\alg$ \cite{Bordemann}, \cite[Chap.~6]{Nadirashvili-Tkachev-Vladuts}. It is called \emph{Euclidean} if $Q$ is positive definite.
\end{definition}

In what follows $Q=\scal{}{}$ is the standard inner product of $\R{n}$. In a Euclidean metrized algebra every operator $L(x)$ is self-adjoint. An \emph{idempotent} is an element $c\ne0$ with $c^2=c$; we denote by $\Idm(\alg)$ the set of idempotents and put $\hat c=c/|c|$.

\subsection{The metrized algebra of a cubic form}
Let $u$ be a cubic form on $\R{n}$ and let $\Tens$ be the unique symmetric trilinear form with $u(x)=\frac16\Tens(x,x,x)$. There is a unique commutative multiplication on $\alg=\R{n}$ such that
\begin{equation}\label{prod}
\scal{xy}{z}=\Tens(x,y,z)=D^3u(x,y,z),\qquad x,y,z\in\alg .
\end{equation}
By the symmetry of $\Tens$ the inner product is invariant, so that $\alg=\alg(u)$ is a Euclidean metrized algebra, called the \emph{metrized algebra of $u$}. Conversely, every Euclidean metrized algebra on $\R{n}$ is obtained in this way from the cubic form $u(x)=\frac16\scal{x^2}{x}$. The following identities are immediate from \eqref{prod}:
\begin{equation}\label{dict}
u(x)=\tfrac16\scal{x^2}{x},\qquad Du(x)=\tfrac12x^2,\qquad D^2u(x)=L(x),\qquad \Delta u(x)=\trace L(x).
\end{equation}
Following D.~Fox \cite{Fox-simplicial}, the algebra $\alg$ is called \emph{exact} if $\trace L(x)=0$ for all $x\in\alg$; by \eqref{dict}, $\alg(u)$ is exact if and only if $u$ is harmonic. The \emph{Killing form} of $\alg$ is $\tau(x,y)=\trace L(x)L(y)$, and the \emph{Killing operator} $\Kill$ is the self-adjoint operator with $\scal{Ax}{y}=\tau(x,y)$; in an orthonormal basis $\{e_i\}$ it is given by \eqref{killing}, and
\begin{equation}\label{trA}
\trace \Kill=\sum_{i,j}|e_ie_j|^2=\sum_{i,j,k}\Tens(e_i,e_j,e_k)^2=:\|\Tens\|^2.
\end{equation}

\begin{lemma}\label{lem:critical}
Let $u\not\equiv0$ be a cubic form and $\alg=\alg(u)$. A point $x\in S$ is a critical point of $u|_S$ with $u(x)\ne0$ if and only if $c=x/(6u(x))$ is an idempotent. In particular, $\Idm(\alg)\neq\emptyset$, and the maximum of $u$ on $S$ is attained at points $\hat c$, $c\in\Idm(\alg)$, with $\max_Su=\frac1{6|c|}$.
\end{lemma}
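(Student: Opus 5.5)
The plan is a Lagrange multiplier argument combined with the dictionary \eqref{dict}. Let $x\in S$. The point $x$ is critical for $u|_S$ if and only if $Du(x)$ is parallel to $x$, that is, $Du(x)=\lambda x$ for some $\lambda\in\R{}$. Pairing with $x$ and using Euler's identity $\scal{Du(x)}{x}=3u(x)$ gives $\lambda=3u(x)$, because $|x|=1$. Since $Du(x)=\frac12x^2$ by \eqref{dict}, the critical point condition becomes $x^2=6u(x)\,x$.

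Now assume $u(x)\ne0$ and put $c=x/(6u(x))$. Then $c^2=x^2/(36u(x)^2)=6u(x)x/(36u(x)^2)=c$ and $c\ne0$, so $c$ is an idempotent.

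For the converse, let $c=x/(6u(x))$ be an idempotent, with $x\in S$ and $u(x)\ne0$. Then $x=6u(x)c$, so $x^2=36u(x)^2c=6u(x)x$. This is exactly the critical point condition above, so $x$ is critical. The only thing to take care of is to carry the scalar $6u(x)$ through the homogeneity of the squaring map.

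For the "in particular" part, note that $u\not\equiv0$ and $u(-x)=-u(x)$, so $\max_Su>0$. The maximum is attained at some $x_0\in S$ by compactness, and $x_0$ is a critical point with $u(x_0)>0$. By the first part, $c=x_0/(6u(x_0))$ is an idempotent, so $\Idm(\alg)\ne\emptyset$. Moreover $|c|=1/(6u(x_0))$, because $|x_0|=1$ and $u(x_0)>0$. Hence $x_0=c/|c|=\hat c$ and $\max_Su=u(x_0)=\frac1{6|c|}$.

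I would also record the following fact, which comes for free from the same computation. For any idempotent $c$ we have $u(c)=\frac16\scal{c^2}{c}=\frac16|c|^2$, so $u(\hat c)=\frac1{6|c|}$. Thus every normalized idempotent is a critical point with positive value. The maximum value is then $\frac1{6|c|}$ for an idempotent of minimal length.

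None of the steps is a real obstacle; the argument is elementary. The only points that need care are the sign bookkeeping and the observation that the maximum is positive. The latter is what lets us choose $c$ pointing in the same direction as $x_0$, so that $x_0=\hat c$ rather than $-\hat c$.
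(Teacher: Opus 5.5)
Your proposal is correct and follows essentially the same route as the paper: Lagrange's rule with $Du(x)=\frac12x^2$, pairing with $x$ to identify the multiplier as $3u(x)$, rescaling by $6u(x)$ to get $c^2=c$, and using oddness of $u$ to make the maximum positive. Your extra remark that $\max_Su=\frac1{6|c|}$ for an idempotent of minimal length is a correct, harmless refinement of the paper's final step.
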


\begin{proof}
By Lagrange's rule and \eqref{dict}, $x\in S$ is critical if and only if $\frac12x^2=\mu x$ for some $\mu\in\R{}$; pairing with $x$ gives $\mu=\frac12\scal{x^2}{x}=3u(x)$. Thus $x^2=6u(x)x$, and for $u(x)\ne0$ this is equivalent to $c^2=c$ with $c=x/(6u(x))$. Conversely, $u(\hat c)=\scal{c^2}{c}/(6|c|^3)=1/(6|c|)$. Since $u$ is odd and $u\not\equiv0$, its maximum on $S$ is positive, and the maximum point is a critical point with $u\ne0$.
\end{proof}

\Cref{lem:critical} is the simplest instance of the analytic method used throughout the theory: the order structure of $\R{}$ and the compactness of the sphere produce idempotents.

\subsection{Hsiang algebras}
By \eqref{dict}, $|Du|^2=\frac14\scal{x^2}{x^2}$ and $\Delta_\infty u=\frac14\scal{x^2}{x^3}$, where $x^3=x\,x^2$. Therefore $u$ satisfies \eqref{hsiangeq} if and only if
\begin{equation}\label{hsiangid}
\scal{x^2}{x^2}\trace L(x)-\scal{x^2}{x^3}=\tfrac23\theta\,\scal xx\scal{x^2}{x},\qquad x\in\alg .
\end{equation}

\begin{definition}\label{def:hsiang}
A Euclidean metrized algebra $\alg$ with $\alg\alg\ne\{0\}$ is called a \emph{Hsiang algebra} with \emph{weight} $\theta$ if it satisfies \eqref{hsiangid} \cite[Chap.~6]{Nadirashvili-Tkachev-Vladuts}, \cite{Tkachev-book}. It is called \emph{trivial} if $\dim\alg\alg=1$; this happens if and only if $u=a\scal{q}{x}^3$ for some $q\in\alg$, $a\ne0$.
\end{definition}

\begin{theorem}[Exactness \cite{Tk10b}, {\cite[Chap.~6]{Nadirashvili-Tkachev-Vladuts}}]\label{the:exact}
A non-trivial Hsiang algebra is exact. Equivalently, every Hsiang cubic which is not congruent to $ax_1^3$ is harmonic.
\end{theorem}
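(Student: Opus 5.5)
The plan is to argue by contradiction using the linear form $\ell(x):=\trace L(x)=\Delta u(x)$. Write $\ell(x)=\scal wx$ with $w\in\alg$; the goal is to show that $w\neq0$ forces $\dim\alg\alg=1$. I would use two tools. The first is the polynomial identity \eqref{hsiangid}, read as $|Du|^2\,\ell=\Delta_\infty u+\theta|x|^2u$, which is an identity between quintic forms. The second is the idempotent machinery of \Cref{lem:critical}.

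\emph{Step 1: scalar information at idempotents.} For $c\in\Idm(\alg)$ we have $c^2=c^3=c$, so \eqref{hsiangid} gives
$$|c|^2\ell(c)-|c|^2=\tfrac23\theta|c|^4 .$$
I would then linearize \eqref{hsiangid} at $c$. Put $x=c+ty$ and collect the coefficients of $t$ and $t^2$. The $t$-coefficient gives a vector identity of the form $(\ldots)L(c)c+\ell(c)c+|c|^2w=\ldots$. Pairing it with $y$ in the eigenspaces of the self-adjoint operator $L(c)$ should yield two things: an expression for $w$ in terms of $c$, and a quadratic equation satisfied by each eigenvalue of $L(c)$ on $c^\perp$. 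From these we get (a) that $w$ is proportional to $c$ modulo a part that must vanish, and (b) that the eigenvalues of $L(c)$ on $c^\perp$ are restricted to a finite set depending on $\ell(c)$ and $\theta|c|^2$.

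\emph{Step 2: choose the idempotent cleverly.} Apply Step 1 to the idempotent $c$ with $\hat c$ a maximum point of $u|_S$. Then $D^2(u|_S)(\hat c)\le0$, which in algebra terms says $L(c)\le\frac12$ on $c^\perp$. Combined with the finite eigenvalue list from (b), this should pin down the Peirce spectrum of $L(c)$ and hence $\trace L(c)$. Independently, Step 1(a) gives $\trace L(c)=\ell(c)$ in terms of $\theta|c|^2$. Comparing the two expressions for $\trace L(c)$ should leave two possibilities: either $\ell(c)=0$ together with $w\perp c$, or a degenerate spectrum in which $L(c)$ vanishes on $c^\perp$.

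\emph{Step 3: globalize.} In the first case I need $w=0$ and not just $w\perp c$. Here I would use the $t^2$-coefficient, or alternatively the fact that the set of maximum points is $O(n)$-stable under the automorphism group. My first attempt would be a direct integral argument instead: multiply the quintic identity by $\ell$ and integrate over $S$. The Hsiang identity combined with integration by parts on the sphere,
$$\int_S \ell\,\Delta_\infty u\,d\sigma=-\tfrac12\int_S |Du|^2\scal{Du}{D\ell}\,d\sigma+\ldots,$$
should produce an identity of the form $\alpha\int_S|Du|^2\ell^2\,d\sigma=\beta\,\theta\int_S|x|^2u\ell\,d\sigma$ in which the two sides have incompatible signs unless $\ell\equiv0$. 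In the degenerate case, $L(c)$ vanishes on $c^\perp$; feeding this back into \eqref{hsiangid} near $c$ should force $u=a\scal{\hat c}x^3$, which is the trivial case.

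\emph{Main obstacle.} I expect Step 3 to be the hardest part, namely getting from the vanishing of $\ell$ at one distinguished idempotent to its vanishing on all of $\alg$. The sign bookkeeping in the integral identity is delicate because $\theta$ itself depends on $\ell(c)$ through Step 1. If the integral route fails, I would restrict \eqref{hsiangid} to the hyperplane $w^\perp$, where the term $|Du|^2\ell$ drops out. On that hyperplane the identity reduces to an $\infty$-eigenvalue equation for $u|_{w^\perp}$, and I would then induct on dimension.
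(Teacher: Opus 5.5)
The paper does not prove this theorem; it is quoted from \cite{Tk10b} and \cite[Chap.~6]{Nadirashvili-Tkachev-Vladuts}. Your plan therefore has to stand on its own, and it has a genuine gap: the dichotomy of Step~2 does not follow from the information you collect.

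Start with Step~1. The coefficient of $t$ in \eqref{hsiangid} at $x=c+ty$ is linear in $y$ and carries no spectral information. It says exactly $|c|^2w=\trace L(c)\,c$ for \emph{every} $c\in\Idm(\alg)$. Two consequences follow. The ``main obstacle'' of Step~3 disappears, since $\trace L(c)=0$ gives $w=0$ at once. More usefully, if $w\ne0$ then every idempotent is a multiple of $w$, so $c$ is the \emph{only} idempotent.

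The eigenvalue equation comes from the $t^2$-coefficient, and it is cubic, not quadratic. Put $b=\trace L(c)-1=\frac23\theta|c|^2$. Then every eigenvalue $s$ of $L(c)$ on $c^\perp$ satisfies $p_b(s):=4s^3-4bs^2+bs+b=0$; for $b=-1$ the roots are the Peirce values $-1,\pm\frac12$.

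At your distinguished idempotent you then know only three things: each eigenvalue is a root of $p_b$, their sum is $b$ (trace), and each is $\le\frac12$ (maximality). These do not force $b\in\{-1,0\}$. Take $n=4$ and let the three roots of $p_b$ be simple eigenvalues. By Vieta their sum is $b$ automatically. For $b$ slightly above $-1$ they are real, distinct and $<\frac12$, since $p_b(\frac12)=\frac{1+b}2>0$ and $p_b'(\frac12)=3-3b>0$. So comparing the two expressions for $\trace L(c)$ yields nothing, and showing $b\in\{-1,0\}$ is essentially the whole theorem.

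This requires input beyond the spectrum at one point. For instance, when $w\ne0$ it would suffice to produce a second idempotent. One exists as soon as some eigenvector $y$ with $L(c)y=sy$, $s<0$, satisfies $y^2\in\R{}c$: then $\frac1{2s}c+\beta y$ with $\beta^2=\frac{(2s-1)|c|^2}{4s^3|y|^2}$ is an idempotent. Controlling $y^2$, however, needs the higher-order terms of the expansion at $c$, which your plan never uses.

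The fallback routes of Step~3 do not close the gap either.

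\emph{Integral identity.} Any identity obtained from \eqref{hsiangeq} by integration by parts also holds for $u=x_1^3$. That cubic satisfies \eqref{hsiangeq} with $\theta=0$ while $\int_S|Du|^2\ell^2\,d\sigma>0$. So no such identity can have the sign structure you want. Non-triviality has to be used in an essential way, and in your sketch it enters only in the degenerate branch. Moreover, integrating $\ell\,\Delta_\infty u$ by parts produces terms like $\int_S|Du|^2\scal w{Du}\,d\sigma$ and $\int_S\ell\,u\,|Du|^2\,d\sigma$, which have no definite sign.

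\emph{Hyperplane reduction.} On $w^\perp$ you do get $\Delta_\infty u=-\theta|x|^2u$. But $\Delta_\infty u=\scal{D^2u\,Du}{Du}$ involves the full gradient, including $\partial_wu$ and the mixed second derivatives. So this is not an $\infty$-eigenvalue equation for $u|_{w^\perp}$, and there is nothing to induct on.

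\emph{Degenerate branch.} This part is fine. If $L(c)=\hat c\otimes\hat c$, then $u=M\scal{\hat c}x^3+v$ with $v(x)=u(x-\scal{\hat c}x\hat c)$. Also $\theta=0$, because $p_b(0)=b$. The part of \eqref{hsiangeq} of degree one in $\scal{\hat c}x$ then reads $6M\scal{\hat c}x\,|Dv|^2=0$, whence $v=0$.
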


For an exact Hsiang algebra, \eqref{hsiangid} reduces to $\scal{x^2}{x^3}=-\frac23\theta|x|^2\scal{x^2}{x}$. Evaluating this identity at an idempotent gives $\theta<0$ and $|c|^2=-\frac{3}{2\theta}$ for \emph{every} $c\in\Idm(\alg)$. Since a similarity of $\alg$ rescales $\theta$, we may assume that $\alg$ is \emph{normalized}, i.e.\ $\theta=-2$ and $|c|^2=\frac34$. \emph{From now on $\alg$ denotes a non-trivial normalized Hsiang algebra, and $u=\frac16\scal{x^2}{x}$ its cubic form, called a Hsiang cubic.}

\begin{theorem}[Peirce decomposition {\cite[Chap.~6]{Nadirashvili-Tkachev-Vladuts}}, \cite{Tkachev-book}]\label{the:peirce}
Let $c\in\Idm(\alg)$. Then $L(c)$ has the simple eigenvalue $1$ with eigenvector $c$, and
\begin{equation}\label{peirce}
\alg=\R{}c\oplus\alg_c(-1)\oplus\alg_c(-\tfrac12)\oplus\alg_c(\tfrac12),\qquad \alg_c(t)=\ker(L(c)-t),
\end{equation}
is an orthogonal decomposition. Moreover, $y_1y_2=-\frac43\scal{y_1}{y_2}c$ for all $y_1,y_2\in\alg_c(-1)$. The dimensions $n_t=\dim\alg_c(t)$ do not depend on $c$; the triple
\begin{equation}\label{profile}
\mathsf{pr}(\alg)=(n_{-1},\,n_{-1/2},\,n_{1/2})
\end{equation}
is called the \emph{Peirce profile} of $\alg$.
\end{theorem}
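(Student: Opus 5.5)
The plan is to linearize the normalized Hsiang identity at an idempotent. By \Cref{the:exact} and the normalization $\theta=-2$, $|c|^2=\frac34$, the identity reads $\scal{x^2}{x^3}=\frac43|x|^2\scal{x^2}{x}$. By invariance, $\scal{x^2}{x\,x^2}=\scal{x^2x^2}{x}$. Taking gradients, with the self-adjointness of $L(x)$, gives the vector identity
\begin{equation*}
4\,x(x\,x^2)+(x^2)(x^2)=\tfrac43\bigl(2\scal{x^2}{x}\,x+3|x|^2x^2\bigr),\qquad x\in\alg .
\end{equation*}
At $x=c$ both sides equal $5c$, using $\scal{c^2}{c}=|c|^2=\frac34$.

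I would then differentiate once more at $x=c$ in a direction $y\perp c$ and write $L=L(c)$. Since $L$ is self-adjoint and $Lc=c$, the complement $c^\perp$ is $L$-invariant. On the left the directional derivative is $4(L+L^2+2L^3)y+4L^2y$. On the right the terms containing $\scal cy$ vanish, leaving $\frac43\bigl(\frac32y+\frac92Ly\bigr)=(2+6L)y$. Hence on $c^\perp$,
\begin{equation*}
8L^3+8L^2-2L-2=2(L+1)(4L^2-1)=0 .
\end{equation*}
So $L|_{c^\perp}$ is diagonalizable with spectrum in $\{-1,\pm\frac12\}$. This gives the orthogonal decomposition \eqref{peirce} and shows that $1$ is a simple eigenvalue with eigenvector $c$.

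For the product rule on $\alg_c(-1)$, I would differentiate the vector identity a third time at $c$, in directions $y_1,y_2\in\alg_c(-1)$. Equivalently, I would expand the scalar identity along $x=c+ty$ to order $t^2$ and $t^3$, then polarize. The quadratic terms involve $y_1y_2$ through expressions like $L(y_1y_2)c$ and $c(y_1y_2)$. After decomposing $y_1y_2$ along \eqref{peirce}, I expect the resulting relations to force every component except the one along $c$ to vanish. The $c$-component is fixed by invariance: $\scal{y_1y_2}{c}=\scal{y_1}{L y_2}=-\scal{y_1}{y_2}$, and dividing by $|c|^2=\frac34$ gives the coefficient $-\frac43$.

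For the independence of $n_t$ from $c$, I would take traces. Exactness gives $\trace L(c)=0$, that is, $1-n_{-1}-\frac12n_{-1/2}+\frac12n_{1/2}=0$. A second relation, for $\trace L(c)^2$, should come from taking the Laplacian (the trace over the free direction) of the differentiated identity at $c$. Together with $1+n_{-1}+n_{-1/2}+n_{1/2}=n$, these three linear equations should determine $(n_{-1},n_{-1/2},n_{1/2})$ in terms of $n$ alone.

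The main obstacle is this last step, together with the bookkeeping in the third-order expansion. If the trace relations turn out to be dependent, I would fall back on local constancy. The eigenvalues of $L(c)$ lie in a fixed finite set and depend continuously on $c$, so the $n_t$ are constant on connected components of $\Idm(\alg)$. It would then remain to connect any two idempotents, for instance through the great circles of Theorem~B(ii).
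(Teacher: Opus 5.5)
The paper gives no proof of this statement; it quotes it from the cited monographs. Your linearization at $c$ is the natural route, and the first two claims go through.

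The relation $2(L+1)(4L^2-1)=0$ on $c^\perp$ is correct. The product rule, which you only expect to come out, follows at once from the $t^2$-coefficient of your vector identity at $x=c+ty$ with $y\in\alg_c(-1)$. Using $(c+ty)^2=c-2ty+t^2y^2$, one finds
\[
(4L^2-6L+5)\,y^2+4|y|^2c=0 .
\]
Since $4\lambda^2-6\lambda+5>0$ for all real $\lambda$, the self-adjoint operator $4L^2-6L+5$ is invertible, and it acts on $c$ as multiplication by $3$. Hence $y^2=-\frac43|y|^2c$, with no Peirce bookkeeping at all; then polarize.

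The genuine gap is the claim that $n_t$ does not depend on $c$. Your main plan cannot succeed, because the Peirce profile is not a function of $n$. In $\R5$ the cubic of $\Pol(1,4)$ has profile $(0,3,1)$, while Cartan's cubic $\Cartan1$ has $(2,0,2)$ (\Cref{tab:ratio}). Both satisfy $\trace L(c)=0$ and the dimension count, and their difference $(2,-3,1)$ spans the kernel of these two equations. So no further relation at a single idempotent, with coefficients depending only on $n$, can single out the profile.

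This is what actually happens with your proposed third relation. The trace of the linearized identity is
\[
\tau(x,x^2)+\trace L(x)^3=\bigl(2+\tfrac n3\bigr)\scal{x^2}{x},
\]
and the Laplacian of the vector identity is just the gradient of this. At $x=c$ it reduces to $\trace L(c)=0$ once more.

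The fallback does not close the gap either. Local constancy on connected components is fine, but $\Idm(\alg)$ is in general disconnected: three points for $\re z^3$, four for $x_1x_2x_3$, two circles for $\Pol(1,4)$. The great circles of \Cref{the:equi}(iii) are not paths in $\Idm(\alg)$, since each meets $\hIdm$ only in the three points of a triad. So continuity says nothing about passing from $c_1$ to $c_2$, and such circles do not exist at all when $n_{-1}=0$.

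What is missing is a genuinely global comparison of idempotents lying in different components. By exactness and the dimension count, $\trace L(c)^2=\frac{n+3}4+\frac34n_{-1}(c)$. The claim is therefore equivalent to the Killing value $\tau(c,c)=\scal{\Kill c}{c}$ being the same at all idempotents. This is immediate in the Einstein case, where $\tau(c,c)=\frac34\nu$. In general it requires structural input that no pointwise linearization at a single idempotent can provide; this is the part the cited references supply.
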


The Peirce profile determines the dimension: $n=3n_{-1}+2n_{-1/2}-1$ and $n_{1/2}=2n_{-1}+n_{-1/2}-2$ \cite{Nadirashvili-Tkachev-Vladuts,Tkachev-book}. In particular $L(c)$ always has a negative eigenvalue; since $L(c)^2=L(c^2)=L(c)$ for an idempotent $c$ of an associative commutative algebra, a non-trivial Hsiang algebra is never associative. We shall also use the fact that $\Idm(\alg)$ is a compact real analytic submanifold of $\alg$ of dimension $n_{1/2}$, with tangent space $\alg_c(\frac12)$ at $c$ \cite{Tkachev-book}.

\begin{definition}\label{def:einstein}
A Hsiang algebra is called an \emph{Einstein Hsiang algebra} if its Killing form is proportional to the inner product, i.e.\ $\Kill=\nu\Id$ for some $\nu\in\R{}$ (then $\nu=\trace \Kill /n>0$).
\end{definition}

The name refers to the analogy between the Killing form and the Ricci tensor; in the terminology of Fox \cite[p.~4]{Fox-simplicial} such algebras are \emph{Killing Einstein}. We keep the word ``Hsiang'' in order to avoid confusion with other uses of the term ``Einstein algebra''.

\subsection{Polar and exceptional Hsiang algebras}\label{sec:polar}
Hsiang algebras fall into two classes: an infinite family constructed from Clifford systems, and finitely many exceptional ones.

\begin{definition}\label{def:polar}
A \emph{symmetric Clifford system} of size $(m_0,m_1)$ is a family $P_1,\dots,P_{m_0}$ of symmetric $m_1\times m_1$ matrices such that
\begin{equation}\label{cliffsys}
P_iP_j+P_jP_i=2\delta_{ij}\Id,\qquad 1\le i,j\le m_0,
\end{equation}
where for $m_0=1$ we also require $\trace P_1=0$. With $P(y)=\sum_iy_iP_i$, so that $P(y)^2=|y|^2\Id$, the cubic form
\begin{equation}\label{clifford}
u(y,z)=\tfrac12\scal{P(y)z}{z},\qquad (y,z)\in\R{m_0}\times\R{m_1}=\R{n},
\end{equation}
is a Hsiang cubic \cite{tkachev2010minimal}, \cite[Chap.~6]{Nadirashvili-Tkachev-Vladuts}. Its algebra is called a \emph{polar algebra of type $(m_0,m_1)$}, and the orthogonal splitting $\mathbb V_0\oplus\mathbb V_1=\R{m_0}\oplus\R{m_1}$ is its \emph{polar decomposition}. We denote by $\Pol(m_0,m_1)$ the class of polar algebras of type $(m_0,m_1)$. A Hsiang algebra which is not similar to a polar one is called \emph{exceptional}.
\end{definition}

For $y\ne0$ the eigenvalues of $P(y)$ are $\pm|y|$, both of multiplicity $\frac{m_1}2$; in particular $m_1$ is even. The type determines the algebra up to isomorphism, except when $m_0\equiv1\bmod4$, $m_0\ge5$, and $\R{m_1}$ is the sum of at least two irreducible Clifford modules: then inequivalent Clifford systems \cite{FKM} give non-isomorphic polar algebras of the same type, the first instance being $(m_0,m_1)=(5,16)$, $n=21$. All quantities considered in this paper depend only on the type, and every specific polar algebra used below is unique in its type; we therefore speak of \emph{the} algebra $\Pol(m_0,m_1)$ in these cases. For instance, the Peirce profile of every $\alg\in\Pol(m_0,m_1)$ is
$$
\mathsf{pr}(\alg)=\bigl(m_0-1,\ \tfrac{m_1}2-m_0+2,\ \tfrac{m_1}2+m_0-2\bigr).
$$
Exceptional Hsiang algebras exist only in finitely many dimensions \cite[Chap.~6]{Nadirashvili-Tkachev-Vladuts}, \cite{Tkachev-book}. The polar algebras $\Pol(k,2k)$, $k\in\{1,2,4,8\}$, are called \emph{mutants}; they are realized by $u=\re\bigl((x_1x_2)x_3\bigr)$ on $\mathbb K_k^3$, where $\mathbb K_k$ is the normed division algebra of dimension $k$. The Einstein Hsiang algebras are exactly the exceptional Hsiang algebras and the mutants \cite{Tkachev-book}.

\subsection{Examples}
We list the Hsiang cubics used below, with the normalization \eqref{Mnorm}; we refer to \cite[Chap.~6]{Nadirashvili-Tkachev-Vladuts}, \cite{Tkachev-book}, \cite{Fox-Tkachev} for their algebras. Among them, the polar algebras $\Pol(1,2)$, $\Pol(2,2)$ and $\Pol(1,4)$, together with $\Pol(2,8)$ on $\R{10}$ (\Cref{tab:ratio}), serve as model examples throughout the paper.
\begin{enumerate}
\item The cubic $u=\frac1{3\sqrt3}(x_1^3-3x_1x_2^2)=\frac1{3\sqrt3}\re(x_1+ix_2)^3$ on $\R2$. Its idempotents are $c_k=\frac{\sqrt3}2(\cos\frac{2\pi k}3,\sin\frac{2\pi k}3)$, $k=0,1,2$.
\item The cubic $u=x_1x_2x_3$ on $\R3$. The multiplication is $e_ie_j=e_k$ for $\{i,j,k\}=\{1,2,3\}$ and $e_i^2=0$, and $\Idm(\alg)=\{\frac12\varepsilon:\ \varepsilon\in\{\pm1\}^3,\ \varepsilon_1\varepsilon_2\varepsilon_3=1\}$ is the vertex set of a regular tetrahedron. Its algebra is $\Pol(1,2)$, the smallest mutant: $x_1x_2x_3=\frac12x_1\scal{P_1z}{z}$ with $z=(x_2,x_3)$ and $P_1=\bigl(\begin{smallmatrix}0&1\\1&0\end{smallmatrix}\bigr)$.
\item The \emph{Lawson cubic} $u=v_0v_2v_3+\frac12(v_2^2-v_3^2)v_1$ on $\R4$, cf.\ \cite{Lawson-completeminimal}, \cite[Chap.~6]{Nadirashvili-Tkachev-Vladuts}. Its algebra is $\Pol(2,2)$, and its idempotents form the circle
\begin{equation}\label{lawsonidm}
c(\varphi)=\bigl(\tfrac12\sin\varphi,\ \tfrac12\cos\varphi,\ \tfrac1{\sqrt2}\cos\tfrac\varphi2,\ \tfrac1{\sqrt2}\sin\tfrac\varphi2\bigr),\qquad\varphi\in[0,4\pi).
\end{equation}
\item The cubic of $\Pol(1,4)$,
\begin{equation}\label{pol14}
u=\tfrac12v_0(y_1^2+y_2^2-y_3^2-y_4^2)\qquad\text{on }\R5=\R{}\times\R4,
\end{equation}
with $P_1=\mathrm{diag}(1,1,-1,-1)$, the polar competitor of Cartan's cubic $\Cartan1$ in the same dimension. Its idempotents form two circles, one in each of the half-spaces $v_0>0$ and $v_0<0$ (\Cref{ex:polar14}).
\item \emph{Cartan's isoparametric cubics} \cite{Cartan-isoparametric}. Let $\mathscr H_3(\mathbb K_d)$ be the Jordan algebra of Hermitian $3\times3$ matrices over $\mathbb K_d\in\{\R{},\mathbb C,\mathbb H,\mathbb O\}$, $d=\dim\mathbb K_d$, with the inner product $\scal XY=\re\trace(XY)$, and let $\alg$ be its traceless part, $n=3d+2$. Then $u=\frac{\sqrt2}{3}\trace X^3$, and
$$
\Idm(\alg)=\bigl\{\tfrac{3\sqrt2}{4}\bigl(e-\tfrac13I\bigr):\ e\ \text{a primitive idempotent of}\ \mathscr H_3(\mathbb K_d)\bigr\}
$$
is a Veronese variety; for $d=1$ the primitive idempotents are $e=vv^T$, $v\in S^2$. We denote this cubic by $\Cartan d$ and call it \emph{Cartan's cubic}; the real case $d=1$, $n=5$, is Cartan's cubic $\Cartan1$. The algebras of Cartan's cubics are exceptional \cite[Chap.~6]{Nadirashvili-Tkachev-Vladuts}, \cite{Tkachev-book}.
\item The cubic $u=\det X$ on $M_3(\R{})\cong\R9$ with $\scal XY=\trace(XY^T)$. By the inequality between the arithmetic and geometric means of the singular values, $\det X\le|\det X|\le(|X|^2/3)^{3/2}$, with equality throughout if and only if $\sqrt3\,X\in SO(3)$. Hence $\max_Su=\frac1{3\sqrt3}$, and since every idempotent of a Hsiang algebra gives a maximum point (\Cref{the:equi}), $\Idm(\alg)=\frac12SO(3)$. This algebra is exceptional: it is an Einstein Hsiang algebra ($\Kill=4\Id$, see \Cref{tab:ratio}), and $n=9$ is not the dimension $3m_0$ of a mutant.
\end{enumerate}

\section{The Hsiang equation as an \texorpdfstring{$\infty$}{infinity}-eigenvalue problem}\label{sec:infty}

\begin{definition}\label{def:infeig}
A cubic form $u\not\equiv0$ on $\R{n}$ is called a \emph{harmonic $\infty$-eigencubic} if $\Delta u=0$ and $\Delta_\infty u=\lambda|x|^2u$ for some $\lambda\in\R{}$.
\end{definition}

The equation $\Delta_\infty u=\lambda|x|^2u$ is a nonlinear eigenvalue equation: $\Delta_\infty$ is homogeneous of degree three in $u$, and $\lambda$ is not an eigenvalue of a linear operator; in particular, it is different from the $\infty$-eigenvalue problem obtained as the limit of the $p$-Laplacian eigenvalue problems \cite{Juutinen-Lindqvist-Manfredi}.

\begin{proposition}\label{pro:infeig}
A cubic form $u\not\equiv0$ is a non-trivial Hsiang cubic if and only if it is a harmonic $\infty$-eigencubic, i.e.
\begin{equation}\label{infeig2}
\Delta u=0\qquad\text{and}\qquad\Delta_\infty u=\lambda|x|^2u\quad\text{for some }\lambda\in\R{}.
\end{equation}
In this case $\lambda=-\theta$ and
\begin{equation}\label{lambdaM}
\lambda=54\,M^2,\qquad M=\max_{S}u.
\end{equation}
\end{proposition}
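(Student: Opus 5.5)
The plan is to prove the two implications separately and then read off the eigenvalue by evaluating the identity at an idempotent.

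\emph{Forward direction.} Let $u$ be a non-trivial Hsiang cubic. By \Cref{the:exact} it is harmonic, so the first term of \eqref{hsiangeq} vanishes. Using $\Delta_\infty u=\frac12\scal{Du}{D|Du|^2}$, equation \eqref{hsiangeq} then reads $-\Delta_\infty u=\theta|x|^2u$. This is \eqref{infeig2} with $\lambda=-\theta$.

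\emph{Converse.} Suppose $u\not\equiv0$ satisfies \eqref{infeig2}. Then $|Du|^2\Delta u=0$, and \eqref{hsiangeq} holds with $\theta=-\lambda$. Hence $\alg(u)$ satisfies \eqref{hsiangid}, and $\alg\alg\ne\{0\}$ because $u\not\equiv0$ and $\scal{x^2}{x}=6u$. It remains to exclude triviality. If $u=a\scal qx^3$ with $a\ne0$, then $\Delta u=6a|q|^2\scal qx\neq0$, contradicting harmonicity. So $\alg(u)$ is a non-trivial Hsiang algebra. This step is the main point of care, though it is short.

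\emph{Eigenvalue.} Use the dictionary \eqref{dict}: $Du=\frac12x^2$ and $D^2u=L(x)$. These give
\[
\Delta_\infty u=\scal{L(x)\tfrac12x^2}{\tfrac12x^2}=\tfrac14\scal{x^2}{x^3}.
\]
Evaluate at an idempotent $c\in\Idm(\alg)$, which exists by \Cref{lem:critical}. There $x^2=x^3=c$, so $\Delta_\infty u(c)=\frac14|c|^2$. On the other side, $u(c)=\frac16|c|^2$, so $\lambda|c|^2u(c)=\frac{\lambda}{6}|c|^4$. Equating gives $\lambda=\frac{3}{2|c|^2}$, which is positive and consistent with $\theta=-\frac{3}{2|c|^2}$.

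By \Cref{lem:critical}, $M=\max_Su=\frac1{6|c|}$ for some idempotent $c$, so $|c|^2=\frac1{36M^2}$. Substituting yields $\lambda=\frac{3\cdot36M^2}{2}=54M^2$.

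Equivalently, one can evaluate \eqref{infeig2} directly at the maximum point $\hat c$, which is a critical point of $u|_S$. There $Du(\hat c)=3M\hat c$ by Euler's relation, and $D^2u(\hat c)\hat c=2Du(\hat c)=6M\hat c$. Hence $\Delta_\infty u(\hat c)=9M^2\cdot6M=54M^3$, while the right-hand side is $\lambda M$. This again gives $\lambda=54M^2$ without needing the equal-length property of idempotents.
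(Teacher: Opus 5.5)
Your proof is correct and follows the paper's argument. You use exactness for the forward direction and the Laplacian of $a\scal qx^3$ to exclude triviality in the converse, and your second computation of $\lambda$ (evaluating $\Delta_\infty u$ at the maximum point $\hat c$ via Euler's identities) is exactly the paper's proof, while your first computation at an idempotent is the algebraic form of the same evaluation (the relation $|c|^2=-\frac{3}{2\theta}$ noted in \Cref{sec:hsiang}) and is equally valid.
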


\begin{proof}
If $u$ is a non-trivial Hsiang cubic, then $\Delta u=0$ by \Cref{the:exact}, and \eqref{hsiangeq} reduces to \eqref{infeig2} with $\lambda=-\theta$. Conversely, \eqref{infeig2} implies \eqref{hsiangeq} with $\theta=-\lambda$, and $u$ is non-trivial because $\Delta(a\scal qx^3)=6a|q|^2\scal qx\not\equiv0$.

To prove \eqref{lambdaM}, let $x_0\in S$ be a maximum point of $u|_S$. By Euler's identities, $\scal{Du(x)}{x}=3u(x)$ and $D^2u(x)x=2Du(x)$. At $x_0$ the gradient is normal to the sphere, so $Du(x_0)=3Mx_0$ and $D^2u(x_0)x_0=6Mx_0$. Therefore
$$
\Delta_\infty u(x_0)=9M^2\scal{D^2u(x_0)\,x_0}{x_0}=54M^3,
$$
while the right hand side of \eqref{infeig2} at $x_0$ equals $\lambda M$. Since $M>0$, \eqref{lambdaM} follows.
\end{proof}

\begin{remark}
The computation in the proof shows that a cubic form $u\not\equiv0$ is never $\infty$-harmonic: $\Delta_\infty u(x_0)=54M^3>0$ at a maximum point on the sphere. The same argument, applied at the maximum and minimum points on the sphere, shows that no homogeneous polynomial $u\not\equiv0$ of degree $k\ge2$ satisfies $\Delta_\infty u=0$; this yields the non-vanishing property of the gradient of real analytic $\infty$-harmonic functions \cite[Prop.~4.1]{Tkachev-laplace}, which goes back to Fuglede \cite{Fuglede-nonvanishing}, cf.\ Lewis \cite{Lewis-smoothness}. Thus \eqref{infeig2} is the simplest non-trivial $\infty$-Laplace equation which cubic forms can satisfy, and the Hsiang cubics are exactly its harmonic solutions. Formula \eqref{lambdaM} is the analytic form of the normalization $|c|^2=-\frac3{2\theta}$: the eigenvalue in \eqref{infeig2} is determined by the Chebyshev level $M$.
\end{remark}

\begin{remark}[$p$-Laplacians]
For a harmonic function with $Du\ne0$, $\Delta_pu=\operatorname{div}(|Du|^{p-2}Du)=(p-2)|Du|^{p-4}\Delta_\infty u$. Hence a non-trivial Hsiang cubic satisfies
$$
\Delta_pu=(p-2)\lambda\,|Du|^{p-4}|x|^2u\qquad\text{for every }p>1,
$$
an observation made in \cite[Sec.~4]{Tkachev-laplace}. This should be compared with the case $\lambda=0$: for $p\notin\{1,2\}$ the only cubic form satisfying $\Delta_pu=0$ in $\R{n}$ is $u\equiv0$ \cite{Tkachev-laplace}, which answers, for cubic forms, a question of J.~L.~Lewis \cite{Lewis-smoothness}; for homogeneous polynomials of degree four and five see Lewis and Vogel \cite{Lewis-Vogel}, and for the algebraic structure of homogeneous (quasiradial) solutions in the plane see \cite{Tkachev-quasiradial}. Thus the Hsiang cubics are the natural non-trivial cubic solutions of the nonlinear eigenvalue equations for the whole family of $p$-Laplacians. For $p=1$ one has $\Delta_1u:=|Du|\operatorname{div}\frac{Du}{|Du|}=-\lambda\frac{|x|^2u}{|Du|^2}$, which is Hsiang's original reading of \eqref{hsiangeq}: the mean curvature of the level set $\{u=s\}$ at $x$ equals $-\lambda s|x|^2/|Du|^3$, i.e.\ it is proportional to the level. For harmonic functions the $1$-Laplacian and the $\infty$-Laplacian coincide up to the factor $-|Du|^{-2}$; this is why the geometric (mean curvature) and the variational ($L^\infty$) readings of \eqref{hsiangeq} agree.
\end{remark}

\begin{remark}[Monotonicity along gradient lines]\label{rem:aronsson}
Aronsson's characterization of $\infty$-harmonic functions of class $C^2$ is that $|Du|$ is constant along the gradient curves $\dot\gamma=Du(\gamma)$ \cite{Aronsson-pde,Aronsson-Crandall-Juutinen}. For a Hsiang cubic,
$$
\frac{d}{dt}\,|Du(\gamma)|^2=2\Delta_\infty u(\gamma)=2\lambda\,|\gamma|^2u(\gamma),\qquad \frac{d}{dt}\,u(\gamma)=|Du(\gamma)|^2.
$$
Thus $|Du|$ is strictly increasing along gradient curves in $\{u>0\}$ and strictly decreasing in $\{u<0\}$, at a rate proportional to the level: the defect from Aronsson's condition is measured by $u$ itself. On the minimal cone $u^{-1}(0)$ (away from its singular set) the gradient is, to first order, of constant length along the flow.
\end{remark}

\begin{remark}[Kellogg's inequality and the Lipschitz problem]\label{rem:kellogg}
For a homogeneous polynomial $p$ of degree $k$, $\max_S|Dp|\le k\max_S|p|$ (Kellogg \cite{Kellogg-polynomials}); for $k=3$ it is a direct consequence of Banach's theorem \cite{Banach-homogene} (see, e.g., \cite[Prop.~1.44]{Dineen-book}, \cite{Agrachev-Kozhasov-Uschmajew}): every symmetric trilinear form $F$ on a Euclidean space satisfies
\begin{equation}\label{banach}
\sup_{|x|=|y|=|z|=1}|F(x,y,z)|=\sup_{|x|=1}|F(x,x,x)| .
\end{equation}
Indeed, if $p(x)=F(x,x,x)$, then $\scal{Dp(x)}y=3F(x,x,y)$ by the symmetry of $F$, hence
\begin{equation}\label{kellogg}
|Dp(x)|\le3\|p\|_\infty|x|^2,\qquad x\in\R n .
\end{equation}
Equality in $\max_S|Dp|\le3\|p\|_\infty$ holds for every cubic form, by the computation in the proof of \Cref{pro:infeig}. Since $Dp$ is homogeneous of degree $2$, the Lipschitz constant of a cubic form on the unit ball $B$ is therefore
$$
\operatorname{Lip}(p,B)=\max_S|Dp|=3\|p\|_\infty .
$$
Consequently the Chebyshev problem \eqref{kappa0} is the same as the problem of minimizing the Lipschitz constant on $B$ of a harmonic cubic with prescribed $L^2$-norm on the sphere. This is a genuine $L^\infty$-variational problem in the sense of Aronsson, restricted to a finite-dimensional class.
\end{remark}

\subsection{A first integral and the Bernstein--Szeg\H{o} inequality}\label{sec:firstint}
On the unit sphere, Euler's identity gives $|Du|^2=|\nabla_Su|^2+9u^2$, where $\nabla_S$ denotes the spherical gradient. Hence Kellogg's inequality of \Cref{rem:kellogg} can be written as
\begin{equation}\label{bsineq}
|\nabla_Su|^2+9u^2\le9\|u\|_\infty^2\qquad\text{on }S .
\end{equation}
For $n=2$, the restriction of a cubic form to the unit circle is a trigonometric polynomial $f(\varphi)$ of degree $3$, and \eqref{bsineq} becomes the Bernstein--Szeg\H{o} inequality $f'(\varphi)^2+9f(\varphi)^2\le9\|f\|_\infty^2$ \cite{Szego-Bernstein}. Equality at every point holds only for $f=M\cos3(\varphi-\varphi_0)$, that is, for the Chebyshev polynomial $T_3$ on the circle. Thus \eqref{bsineq} is a spherical form of the Bernstein--Szeg\H{o} inequality, and equality in it at every point is the \emph{eikonal equation} $|Du|^2=9M^2|x|^4$. For a Hsiang cubic the equality set of \eqref{bsineq} is large, and it is described by the following first integral.

\begin{proposition}\label{pro:firstint}
Let $u$ be a Hsiang cubic and $M=\|u\|_\infty$. Put
\begin{equation}\label{firstint}
Q(x)=9M^2|x|^4-|Du(x)|^2 .
\end{equation}
\begin{enumerate}
\item[(i)] $Q\ge0$ on $\R{n}$, and $Q$ is constant along every gradient curve $\dot\gamma=Du(\gamma)$.
\item[(ii)] $Q$ vanishes on the subspace $\R{}c\oplus\alg_c(-1)$ for every $c\in\Idm(\alg)$. In particular, equality holds in \eqref{bsineq} on the sphere $S_c=S\cap\bigl(\R{}c\oplus\alg_c(-1)\bigr)$ of dimension $n_{-1}$.
\end{enumerate}
\end{proposition}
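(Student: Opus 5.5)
Part (i) splits into two claims. Nonnegativity is immediate: by Kellogg's inequality \eqref{kellogg}, $|Du(x)|\le 3M|x|^2$, so $Q\ge0$.

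For the first integral we differentiate along a gradient curve $\dot\gamma=Du(\gamma)$. We have
$$
\frac{d}{dt}|\gamma|^4=2|\gamma|^2\cdot2\scal{\gamma}{Du(\gamma)}=12|\gamma|^2u(\gamma),
$$
by Euler's identity $\scal{Du(x)}x=3u(x)$. By \Cref{rem:aronsson},
$$
\frac{d}{dt}|Du(\gamma)|^2=2\Delta_\infty u(\gamma)=2\lambda|\gamma|^2u(\gamma).
$$
Hence
$$
\frac{d}{dt}Q(\gamma)=(108M^2-2\lambda)|\gamma|^2u(\gamma),
$$
and this vanishes precisely because $\lambda=54M^2$ by \Cref{pro:infeig}. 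The only real content is that identity; the rest is bookkeeping.

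For (ii) we compute $|Du|^2=\frac14|x^2|^2$ on $x=sc+y$ with $y\in\alg_c(-1)$. By \Cref{the:peirce}, $cy=-y$ and $y^2=-\frac43|y|^2c$, so
$$
x^2=s^2c-2sy-\tfrac43|y|^2c=\bigl(s^2-\tfrac43|y|^2\bigr)c-2sy .
$$
Orthogonality of $c$ and $y$ together with $|c|^2=\frac34$ gives
$$
|x^2|^2=\tfrac34\bigl(s^2-\tfrac43|y|^2\bigr)^2+4s^2|y|^2 .
$$
We compare this with $|x|^4=(\frac34s^2+|y|^2)^2$. With $M=\frac1{3\sqrt3}$ we get $9M^2=\frac13$, so we need
$$
\tfrac14|x^2|^2=\tfrac13|x|^4 .
$$
Expanding, the left side is $\frac3{16}s^4-\frac12s^2|y|^2+\frac13|y|^4+s^2|y|^2=\frac3{16}s^4+\frac12s^2|y|^2+\frac13|y|^4$. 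The right side is $\frac13\bigl(\frac9{16}s^4+\frac32s^2|y|^2+|y|^4\bigr)$, which agrees term by term. Thus $Q\equiv0$ on $\R{}c\oplus\alg_c(-1)$.

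Finally, on $S$ we have $|Du|^2=|\nabla_Su|^2+9u^2$, so $Q=0$ there is exactly equality in \eqref{bsineq}. The dimension count is $\dim S_c=(1+n_{-1})-1=n_{-1}$.

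The main obstacle is only to ensure the normalization constants match, namely $|c|^2=\frac34$, $M=\frac1{6|c|}$ and $\lambda=54M^2=2=-\theta$; these are all consistent with \eqref{Mnorm} and \Cref{lem:critical}.
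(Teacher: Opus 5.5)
Your proposal is correct and follows the paper's proof essentially step for step: Kellogg's inequality gives $Q\ge0$, the derivative of $Q$ along a gradient curve vanishes because $\lambda=54M^2$, and the Peirce multiplication rules give $x^2$ on $\R{}c\oplus\alg_c(-1)$. The only cosmetic difference is that you check $\tfrac14|x^2|^2=\tfrac13|x|^4$ by expanding both sides, whereas the paper writes $\tfrac34\bigl(s^2-\tfrac43|y|^2\bigr)^2+4s^2|y|^2$ directly as the perfect square $\tfrac43\bigl(\tfrac34s^2+|y|^2\bigr)^2$.
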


\begin{proof}
(i) The inequality $Q\ge0$ is \eqref{bsineq}. Along a gradient curve, \Cref{rem:aronsson} gives $\frac{d}{dt}|Du(\gamma)|^2=2\lambda|\gamma|^2u(\gamma)$. On the other hand, $\frac{d}{dt}|\gamma|^2=2\scal{\gamma}{Du(\gamma)}=6u(\gamma)$ by Euler's identity, so $\frac{d}{dt}|\gamma|^4=12|\gamma|^2u(\gamma)$. Hence
$$
\frac{d}{dt}\,Q(\gamma)=(108M^2-2\lambda)\,|\gamma|^2u(\gamma)=0
$$
since $2\lambda=108M^2$ by \eqref{lambdaM}.

(ii) In the normalized scale $9M^2=\frac13$ and $|Du|^2=\frac14|x^2|^2$. Let $x=sc+y$ with $s\in\R{}$ and $y\in\alg_c(-1)$. By \Cref{the:peirce}, $cy=-y$ and $y^2=-\frac43|y|^2c$, so that
$$
x^2=\bigl(s^2-\tfrac43|y|^2\bigr)c-2s\,y .
$$
Since $|c|^2=\frac34$, this gives
$$
|x^2|^2=\tfrac34\bigl(s^2-\tfrac43|y|^2\bigr)^2+4s^2|y|^2=\tfrac43\bigl(\tfrac34s^2+|y|^2\bigr)^2=\tfrac43|x|^4,
$$
that is, $|Du(x)|^2=\frac13|x|^4=9M^2|x|^4$.
\end{proof}

Part (i) is the counterpart, for the $\infty$-eigenvalue equation \eqref{infeig2}, of Aronsson's characterization of $\infty$-harmonic functions. There $|Du|$ itself is constant along gradient curves. Here $|Du|^2$ grows, as noted in \Cref{rem:aronsson}, but the growth is compensated exactly by that of $9M^2|x|^4$. Since $Q$ is homogeneous, it follows that the set
$$
K(u)=\bigl\{x\in S:\ |\nabla_Su(x)|^2+9u(x)^2=9M^2\bigr\}
$$
where \eqref{bsineq} is sharp is invariant under the gradient flow of $u|_S$. For every cubic form this set contains the maximum and minimum points on $S$. For a Hsiang cubic, by (ii), it contains the spheres $S_c$, on which $u$ restricts to the Chebyshev form $M\,\Che_{3,n_{-1}+1}$ of \Cref{rem:aku}, and in particular the great circles of \Cref{the:equi}(iii), on which $u=M\cos3\varphi$. Thus the equality case of the one-dimensional Bernstein--Szeg\H{o} inequality, attained by $T_3$, propagates to spheres of dimension $n_{-1}$ through every idempotent. At the other extreme, $Q\equiv0$ means that $u$ is harmonic and satisfies the eikonal equation; these are exactly the Cartan--M\"unzner equations for isoparametric cubics, so $Q\equiv0$ if and only if $u$ is one of Cartan's cubics $\Cartan d$ \cite{Cartan-isoparametric,CecilRyan,Tkachev-generalization}. For them $K(u)=S$. More generally, by \cite{Tkachev-isoparametric}, the homogeneous polynomial solutions of the eikonal equation of any degree $m$ are exactly the compositions $T_k\circ G$ of a Chebyshev polynomial with a Cartan--M\"unzner polynomial $G$ of degree $m/k$. In this sense equality in the spherical Bernstein--Szeg\H{o} inequality characterizes the spherical Chebyshev polynomials in the most literal sense, and among harmonic cubics it singles out Cartan's cubics. This leads to the global result of the next section.

\section{A sharp lower bound: Cartan's cubics are global minimizers}\label{sec:cartanmin}

The following result does not use the structure theory of Hsiang algebras: it combines the Dirichlet energy of a harmonic cubic in the unit ball with Kellogg's inequality \eqref{kellogg}.

\begin{theorem}\label{the:cartanmin}
For every $n\ge2$ and every $0\ne p\in\hcub(\R{n})$,
\begin{equation}\label{cartanbound}
R(p)\ \ge\ \frac{n+4}{3}.
\end{equation}
Equality holds if and only if $|Dp|^2=9\|p\|_\infty^2|x|^4$ on $\R n$, i.e.\ if and only if, up to rotations and scaling, either $n=2$ and $p=\re z^3$, or $n=3d+2$ with $d\in\{1,2,4,8\}$ and $p$ is Cartan's cubic $\Cartan d$. In particular $\kappa_n\ge\frac{n+4}3$ for all $n$, with equality exactly for $n\in\{2,5,8,14,26\}$:
$$
\kappa_5=3,\qquad\kappa_8=4,\qquad\kappa_{14}=6,\qquad\kappa_{26}=10,
$$
and in these dimensions the minimum in \eqref{kappa0} is attained only on the orbit of Cartan's cubic.
\end{theorem}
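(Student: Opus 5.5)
The plan is to compare two expressions for the Dirichlet energy of $p$ in the unit ball. Since $p$ is harmonic and homogeneous of degree $3$, Green's formula together with Euler's identity $\partial_r p=3p$ on $S$ gives
$$
\int_B|Dp|^2\,dx=\int_S p\,\partial_rp\,dS=3\,|S^{n-1}|\,\|p\|_2^2 .
$$
On the other hand, $|Dp|^2$ is homogeneous of degree $4$, so integrating in polar coordinates gives $\int_B|Dp|^2dx=\frac{|S^{n-1}|}{n+4}\int_S|Dp|^2d\sigma$. Hence
$$
\int_S|Dp|^2\,d\sigma=3(n+4)\|p\|_2^2 .
$$
Kellogg's inequality \eqref{kellogg} gives $|Dp|^2\le9\|p\|_\infty^2$ on $S$. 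Integrating this against $\sigma$ yields $3(n+4)\|p\|_2^2\le9\|p\|_\infty^2$, which is exactly \eqref{cartanbound}.

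For the equality case, equality in the integrated inequality forces $|Dp|^2=9\|p\|_\infty^2$ pointwise on $S$, by continuity and the full support of $\sigma$. By homogeneity this is the eikonal equation $|Dp|^2=9\|p\|_\infty^2|x|^4$ on $\R n$, and the converse is immediate. It then remains to classify harmonic cubics solving the eikonal equation. After scaling, the pair $|Dp|^2=9|x|^4$, $\Delta p=0$ is precisely the Cartan--M\"unzner system for isoparametric polynomials of degree $3$ (with multiplicity relation $\Delta p=0$). So I will invoke Cartan's classification \cite{Cartan-isoparametric,CecilRyan}, or equivalently \cite{Tkachev-isoparametric} in the form quoted after \Cref{pro:firstint}, as the paper does there. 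It gives $n=3d+2$ with $d\in\{1,2,4,8\}$ and $p=\Cartan d$ up to rotation and scaling. The degenerate case $n=2$ is handled directly: harmonic cubics on $\R2$ are $a\re(e^{-3i\varphi_0}z^3)$, all of which are eikonal. The same argument also shows that every harmonic cubic on $\R2$ attains the bound.

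For the consequences about $\kappa_n$, the minimum $\kappa_n$ is attained by compactness. Therefore $\kappa_n=\frac{n+4}3$ holds if and only if some eikonal harmonic cubic exists on $\R n$, i.e.\ exactly for $n\in\{2,5,8,14,26\}$, and then the minimizers are exactly the orbit of $\Cartan d$. Substituting $n=5,8,14,26$ gives the values $3,4,6,10$. For the remaining $n$, the strict inequality $R(p)>\frac{n+4}3$ holds for every $p$, so the attained minimum satisfies $\kappa_n>\frac{n+4}3$.

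The main obstacle is only the classification step, which I take from the literature. One point needs care there: Cartan's theorem concerns isoparametric hypersurfaces with three principal curvatures, and the translation to cubic polynomial solutions of the Cartan--M\"unzner equations must be checked. In particular, harmonicity is exactly the equal-multiplicities condition, and the case $n=2$ must be allowed as a degenerate "$d=0$" instance. The analytic part is routine once the identity $\int_S|Dp|^2d\sigma=3(n+4)\|p\|_2^2$ is in hand.
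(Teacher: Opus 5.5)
Your proposal is correct and follows the paper's proof. Both arguments compute the Dirichlet energy of $p$ in the ball by Green's formula and compare it with Kellogg's inequality; you reduce to an integral over $S$ in polar coordinates, whereas the paper integrates $9M^2|x|^4$ over $B$, which is the same computation. Both also reduce the equality case to the eikonal equation. The only difference is the classification citation. The paper quotes Tkachev's classification of all eikonal cubics and then discards the non-harmonic $T_3(\langle x,e_n\rangle)$. You observe that harmonic plus eikonal is already the Cartan--M\"unzner system with $g=3$, invoke Cartan directly, and treat $n=2$ by hand. Both routes are valid.
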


\begin{proof}
Let $B$ be the unit ball, $|S|$ the area of $S$ and $M=\|p\|_\infty$. Since $p$ is harmonic and $\partial_rp=3p$ on $S$ by Euler's identity, Green's formula gives
$$
\int_B|Dp|^2\,dx=\int_Sp\,\partial_rp\,dS=3|S|\,\|p\|_2^2 .
$$
On the other hand, $|Dp(x)|\le3M|x|^2$ on $\R n$ by Kellogg's inequality \eqref{kellogg}. Since $\int_B|x|^4\,dx=|S|/(n+4)$,
$$
3|S|\,\|p\|_2^2\ \le\ 9M^2\int_B|x|^4\,dx=\frac{9M^2|S|}{n+4},
$$
which is \eqref{cartanbound}. Equality holds if and only if the continuous non-negative function $9M^2|x|^4-|Dp|^2$ vanishes on $B$, hence, by homogeneity, on $\R n$. By \cite[Thm.~1.4]{Tkachev-isoparametric} with $m=3$ (see also \cite{Cartan-isoparametric,Tkachev-generalization}), a cubic form satisfying the eikonal equation $|Dp|^2=9M^2|x|^4$ is, up to rotations and scaling, either one of Cartan's isoparametric cubics $\Cartan d$, $n=3d+2$, $d\in\{1,2,4,8\}$, or the cubic $x_n^3-3x_n(x_1^2+\dots+x_{n-1}^2)$, i.e.\ $T_3(\scal x{e_n})$ on the sphere. The latter has Laplacian $-6(n-2)x_n$, so it is harmonic only for $n=2$, where it is $\re z^3$. Conversely, these cubics are harmonic and satisfy the eikonal equation. The statement on $\kappa_n$ follows because the minimum in \eqref{kappa0} is attained.
\end{proof}

Cartan's cubics are thus the exact spherical counterparts of the Chebyshev polynomials. Equality in \eqref{bsineq} at every point of $S$ is the transnormality condition, and for an isoparametric cubic it integrates to
$$
u(x)=M\cos\bigl(3\operatorname{dist}_S(x,\hIdm)\bigr),\qquad x\in S,
$$
where $\operatorname{dist}_S$ is the geodesic distance and $\hIdm$ is the focal Veronese submanifold \cite{Munzner-I,Tkachev-isoparametric}. Thus $u=M\cos3\varphi$ along every great circle normal to $\hIdm$, not only along the circles of \Cref{the:equi}(iii); in Peirce terms, the normal space $\alg_c(-1)\oplus\alg_c(-\frac12)$ reduces to $\alg_c(-1)$, since $n_{-1/2}=0$ for Cartan's cubics.

\begin{remark}\label{rem:kelloggdefect}
The proof gives the identity
$$
R(p)=\frac{n+4}{3}\Bigl(1-\frac{\langle Q_p\rangle}{9M^2}\Bigr)^{-1},\qquad \langle Q_p\rangle=(n+4)\int_B\bigl(9M^2|x|^4-|Dp|^2\bigr)\frac{dx}{|S|},
$$
so that minimizing $R$ amounts to minimizing the mean defect in Kellogg's inequality. In dimensions $n\ne2,5,8,14,26$ no harmonic cubic is eikonal and the defect is bounded away from zero; for $n=3$ the value $\kappa_3=\frac{35}9$ of \Cref{conj:kappa} corresponds to the normalized mean defect $\langle Q_p\rangle/(9M^2)=\frac25$, attained by $x_1x_2x_3$ ($\Pol(1,2)$).
\end{remark}

For Hsiang cubics, \eqref{cartanbound} and \eqref{ratio} give an inequality for the Killing form which is purely algebraic in nature.

\begin{corollary}\label{cor:killingbound}
For every normalized Hsiang algebra $\alg$ on $\R n$,
$$
\trace \Kill\ \le\ \tfrac23\,n(n+2),
$$
with equality if and only if $\alg$ is the algebra of Cartan's cubic $\Cartan d$ (or of $\re z^3$ for $n=2$).
\end{corollary}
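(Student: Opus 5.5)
The plan is to combine \Cref{the:cartanmin} with the formula $R(u)=\frac{2N_n}{9\trace\Kill}$ of Theorem~B(iii), referred to as \eqref{ratio} in the text preceding the corollary. A Hsiang cubic is harmonic by \Cref{the:exact}, so \eqref{cartanbound} applies to it, and $\frac{2N_n}{9\trace\Kill}\ge\frac{n+4}{3}$. Since $N_n=n(n+2)(n+4)$, this rearranges at once to $\trace\Kill\le\frac{2n(n+2)(n+4)}{3(n+4)}=\frac23n(n+2)$, which is the inequality.

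For the equality case I will use the equality statement in \Cref{the:cartanmin}. Equality $\trace\Kill=\frac23 n(n+2)$ holds if and only if $R(u)=\frac{n+4}3$. By \Cref{the:cartanmin}, this happens if and only if $u$ satisfies the eikonal equation. That in turn means $u$ is, up to rotation and scaling, $\re z^3$ with $n=2$ or Cartan's cubic $\Cartan d$. Rotations act on the algebra by orthogonal isomorphisms. Scaling is fixed by the normalization $|c|^2=\frac34$ (equivalently $M=\frac1{3\sqrt3}$), and since $R$ is scale invariant this does not affect the argument. So the algebra is isomorphic to that of $\Cartan d$ or of $\re z^3$.

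For the converse I need these cubics to be Hsiang cubics with the stated normalization. This holds by the Examples in \Cref{sec:hsiang}: both are listed as normalized Hsiang cubics. Equality then follows from the equality case of \Cref{the:cartanmin} together with \eqref{ratio}. As a sanity check for $\re z^3$ I can compute directly: $\trace\Kill=\|\Tens\|^2$ with $u=\frac1{3\sqrt3}(x_1^3-3x_1x_2^2)$ gives the value $\frac{16}{3}=\frac23\cdot2\cdot4$.

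The only point needing care is that \eqref{ratio} (Theorem~B(iii)) is proved later in \Cref{sec:problem}. I assume it as stated, and I expect the main work to lie there rather than in this corollary. Once it is available, the corollary is an immediate algebraic rearrangement, with a transfer of the equality case.
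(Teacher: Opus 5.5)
Your proposal is correct and follows the same route as the paper: insert the formula \eqref{ratio} into the bound \eqref{cartanbound} of \Cref{the:cartanmin}, rearrange using $N_n=n(n+2)(n+4)$, and take the equality case from that theorem. Your explicit handling of the equality case (the normalization fixes the scale, and rotations give isomorphic algebras) spells out a point the paper's one-line proof leaves implicit.
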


In this sense, in the dimensions where they exist, Cartan's cubics have ``the most multiplication'' among all Hsiang cubics.

\begin{proof}
By \eqref{ratio}, $R(u)=\frac{2N_n}{9\trace \Kill}$, and $R(u)\ge\frac{n+4}3$ is equivalent to $\trace \Kill\le\frac{2N_n}{3(n+4)}=\frac23n(n+2)$.
\end{proof}

\section{Equioscillation}\label{sec:equi}

The Chebyshev polynomial $T_k$ is singled out among polynomials of degree $k$ by three features: all its local extrema on $[-1,1]$ have the same absolute value; the points where this value is attained alternate in sign; and these alternation points, not the polynomial itself, are what enters the extremal characterization of $T_k$ and the associated quadrature formulas. In \Cref{sec:cartanmin} the most rigid case was settled: Cartan's cubics coincide with $M\cos3\varphi$ along every great circle normal to their maximum set. A general Hsiang cubic is not eikonal, and equality in \eqref{bsineq} holds only on the Chebyshev spheres $S\cap(\R{}c\oplus\alg_c(-1))$ of \Cref{sec:firstint}. Nevertheless, all three features survive, and they are encoded in the algebra: the extremal values are governed by the idempotents (\Cref{lem:critical}), the alternation set is the normalized idempotent variety $\pm\hIdm$, and the local behaviour of $u|_S$ near a maximum is described by the Peirce decomposition (\Cref{the:peirce}). This is the content of the following theorem, which is the basic input for the rest of the paper: the alternation set $\pm\hIdm$ carries the measures of \Cref{sec:kolm,sec:waring,sec:existence}, and the Peirce spectrum at its points controls the second variation in \Cref{sec:local}.

\begin{theorem}\label{the:equi}
Let $u$ be a Hsiang cubic and let $M$ be as in \eqref{Mnorm}.
\begin{enumerate}
\item[(i)] The critical values of $u|_S$ are contained in $\{M,0,-M\}$. Every local maximum of $u|_S$ is a global maximum, and
\begin{equation}\label{altset}
\{x\in S:\ |u(x)|=\|u\|_\infty\}=\hIdm\cup(-\hIdm),\qquad \hIdm=\{\hat c:\ c\in\Idm(\alg)\}.
\end{equation}
\item[(ii)] The Riemannian Hessian of $u|_S$ at $\hat c$ is
\begin{equation}\label{hessian}
\operatorname{Hess}(u|_S)(\hat c)=6M\bigl(L(c)-\tfrac12\bigr)\big|_{c^\perp},
\end{equation}
with eigenvalues $-9M$, $-6M$, $0$ on $\alg_c(-1)$, $\alg_c(-\frac12)$, $\alg_c(\frac12)$.
\item[(iii)] If $n_{-1}\ge1$, $\xi\in\alg_c(-1)$ and $|\xi|=1$, then
\begin{equation}\label{T3}
u(\cos\varphi\,\hat c+\sin\varphi\,\xi)=M\cos3\varphi=M\,T_3(\cos\varphi),\qquad\varphi\in\R{}.
\end{equation}
The six points $\varphi\in\frac\pi3\mathbb{Z}$ of alternation on this circle are $\pm\hat c_1,\pm\hat c_2,\pm\hat c_3$, where $c_1=c$ and $\{c_1,c_2,c_3\}$ is a \emph{triad} of idempotents, $c_1+c_2+c_3=0$.
\end{enumerate}
\end{theorem}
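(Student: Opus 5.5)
The plan is to read everything off \Cref{lem:critical}, the equal length of idempotents, and the Peirce decomposition (\Cref{the:peirce}). We work in the normalized scale, $|c|^2=\frac34$ and $M=\frac1{6|c|}=\frac1{3\sqrt3}$.

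\emph{Part (i).} By \Cref{lem:critical}, a critical point $x\in S$ with $u(x)\ne0$ has the form $x=\pm\hat c$, because $x/(6u(x))$ is an idempotent and $u$ is odd. Since every idempotent has the same length, $u(\pm\hat c)=\pm\frac1{6|c|}=\pm M$. Hence the critical values lie in $\{M,0,-M\}$. The points with $|u|=\|u\|_\infty$ are extrema, hence critical with $u\ne0$, which gives \eqref{altset}.

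It remains to show that no local maximum has value $0$. This degenerate case is the only real obstacle, and I would handle it as follows. At a critical point $x$ with $u(x)=0$, the Lagrange relation $x^2=6u(x)x$ gives $x^2=0$. The Riemannian Hessian there is $D^2u(x)-3u(x)\Id=L(x)$ restricted to $x^\perp$. Exactness (\Cref{the:exact}) gives $\trace L(x)=0$, and $L(x)x=x^2=0$, so $L(x)|_{x^\perp}$ is traceless. A local maximum forces this operator to be $\le0$, hence $L(x)=0$, i.e.\ $\Tens(x,\cdot,\cdot)=0$. Expanding, $u(x+h)=u(h)$ for $h\perp x$. Since $u\not\equiv0$ on $x^\perp$ (otherwise $u\equiv0$), $u(h)$ takes positive values for arbitrarily small $h\perp x$; normalizing $x+h$ back to $S$ only multiplies $u$ by a positive factor, so $x$ is not a local maximum. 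A local maximum therefore has $u\ne0$ and value $M$.

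\emph{Part (ii).} At a critical point of $u|_S$ the Riemannian Hessian is $D^2u(\hat c)-\scal{Du(\hat c)}{\hat c}\Id$ on $\hat c^\perp=c^\perp$, that is, $L(\hat c)-3M\Id$. Since $L(\hat c)=L(c)/|c|=6M\,L(c)$, this equals $6M(L(c)-\frac12)$, which is \eqref{hessian}. Reading off the Peirce eigenvalues $-1,-\frac12,\frac12$ of $L(c)$ on $c^\perp$ gives $-9M,-6M,0$.

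\emph{Part (iii).} Write $x=sc+y$ with $s=\cos\varphi/|c|$ and $y=\sin\varphi\,\xi$. Reuse the identity $x^2=(s^2-\frac43|y|^2)c-2sy$ from the proof of \Cref{pro:firstint}. Pairing with $x$ gives
\[
\scal{x^2}{x}=\tfrac34s^3-3s|y|^2=\tfrac{2}{\sqrt3}\bigl(\cos^3\varphi-3\cos\varphi\sin^2\varphi\bigr)=\tfrac2{\sqrt3}\cos3\varphi,
\]
so $u=M\cos3\varphi$. At $\varphi\in\frac\pi3\mathbb Z$ we have $|u|=M$, so by (i) these points lie in $\pm\hIdm$. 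The points $\varphi=0,\frac{2\pi}3,\frac{4\pi}3$ have value $+M$, so they are $\hat c_1,\hat c_2,\hat c_3$ with $c_1=c$. These three unit vectors sum to zero, as cube roots of unity in the plane spanned by $\hat c$ and $\xi$. Because all idempotents have the same length, $c_i=|c|\hat c_i$, and therefore $c_1+c_2+c_3=0$.
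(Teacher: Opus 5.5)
Your proof is sound apart from one omitted case, and it takes a different route from the paper's at two points.

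\textbf{The omitted case.} It is in (i). After excluding the value $0$ you conclude that a local maximum ``has $u\ne0$ and value $M$''. But a critical point with $u\ne0$ could a priori be $-\hat c$, and you never rule out a local maximum of value $-M$. The paper disposes of this in one line. Such a point is also a global minimum, so $u|_S$ is locally constant there and $u=-M|x|^3$ on an open cone, which is impossible for a polynomial. In your own style it also follows from the Hessian. At $-\hat c$ the Riemannian Hessian equals $-6M\bigl(L(c)-\frac12\bigr)$ on $c^\perp$. Since $\trace L(c)=0$ and $L(c)c=c$, the restriction $L(c)|_{c^\perp}$ has trace $-1$, hence a negative eigenvalue $\lambda$. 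This gives the positive Hessian eigenvalue $6M(\frac12-\lambda)$.

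\textbf{Value-zero local maxima.} The paper uses homogeneity to get $u\le0$ on an open cone around $x_0$, and then the strong maximum principle for the harmonic function $u$. You instead combine the second-order necessary condition with exactness: $x^2=0$ makes $L(x)|_{x^\perp}$ traceless and negative semidefinite, so $L(x)=0$ and $u(x+h)=u(h)$. Your argument is purely finite-dimensional, needing only $\trace L(x)=0$ and self-adjointness rather than a maximum principle. The paper's argument is shorter and uses harmonicity only as a property of the function.

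\textbf{The triad in (iii).} The paper verifies directly that $-\frac12c\pm\frac{\sqrt3}2|c|\xi$ are idempotents, using the multiplication rule in the plane spanned by $c$ and $\xi$. This exhibits the triad algebraically and independently of (i). You instead obtain $c_2,c_3$ from (i) and \Cref{lem:critical} as the maximum points on the circle. You then get $c_1+c_2+c_3=0$ from the equal length of idempotents. Both routes are valid.
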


\begin{proof}
(i) By \Cref{lem:critical}, every critical point with $u(x)\ne0$ is $\pm\hat c$ with $c$ an idempotent, and $|u(\pm\hat c)|=\frac1{6|c|}$. Since all idempotents have the same length, these values are all equal to $M$. Let $x_0$ be a local maximum of $u|_S$. If $u(x_0)<0$, then $x_0=-\hat c$ is also a global minimum point, so $u|_S$ is constant near $x_0$ and $u=-M|x|^3$ on an open cone, which is impossible for a polynomial. If $u(x_0)=0$, then by homogeneity $u\le0$ on an open cone $\Gamma$ containing $x_0$, with $u(x_0)=0$; since $u$ is harmonic (\Cref{the:exact}), the strong maximum principle gives $u\equiv0$ on $\Gamma$, hence $u\equiv0$. Therefore every local maximum has the value $M$, and \eqref{altset} follows.

(ii) The Riemannian Hessian of the restriction of a function $f$ to the unit sphere at a critical point $x$ is $D^2f(x)-\scal{Df(x)}{x}\Id$ on $x^\perp$. With $f=u$, $x=\hat c$, we have $D^2u(\hat c)=L(\hat c)=L(c)/|c|$ and $\scal{Du(\hat c)}{\hat c}=3M=\frac1{2|c|}$, which gives \eqref{hessian}. The eigenvalues follow from \eqref{peirce}.

(iii) Put $\eta=|c|\xi$, so $|\eta|=|c|$. By \Cref{the:peirce}, $c\eta=-\eta$ and $\eta^2=-\frac43|\eta|^2c=-c$. Hence $\R{}c\oplus\R{}\eta$ is a subalgebra, and $(sc+t\eta)^2=(s^2-t^2)c-2st\,\eta$. Then
$$
u(sc+t\eta)=\tfrac16\scal{(s^2-t^2)c-2st\,\eta}{sc+t\eta}=\tfrac{|c|^2}6(s^3-3st^2).
$$
Substituting $s=\cos\varphi/|c|$, $t=\sin\varphi/|c|$ gives $u=\frac{1}{6|c|}\cos3\varphi$. Finally, the elements $c_2=-\frac12c+\frac{\sqrt3}2\eta$, $c_3=-\frac12c-\frac{\sqrt3}2\eta$ satisfy $c_2^2=c_2$, $c_3^2=c_3$ by the same multiplication rule, and $c_1+c_2+c_3=0$.
\end{proof}

\begin{remark}
Part (i) is the exact analogue of the defining property of $T_k$: all local extrema of $u|_S$ have the same absolute value. Part (ii) says that \emph{the Peirce decomposition of an idempotent is the spectral decomposition of the Hessian of the cubic at its maximum}, and that $u|_S$ is Morse--Bott along its maximum set, with nullity $n_{1/2}=\dim\Idm(\alg)$. Part (iii) shows that through every idempotent there passes an $(n_{-1}-1)$-parameter family of great circles on which $u$ is literally the Chebyshev polynomial $T_3$; the alternation points on these circles are hexagons of idempotents. For a generic cubic form none of these properties holds: its maximum on the sphere is attained at a single pair of points.
\end{remark}

\begin{remark}
Equioscillation in the sense of (i) does not need the full Hsiang identity: it only uses that all idempotents of $\alg(u)$ have the same length. Whether, conversely, a harmonic cubic whose critical values on the sphere are $\pm M$ and $0$ is necessarily a Hsiang cubic is open (\Cref{prob:converse}).
\end{remark}

\begin{remark}[Chebyshev forms]\label{rem:aku}
Following \cite{Agrachev-Kozhasov-Uschmajew}, the \emph{cubic Chebyshev form} in $m$ variables is
$$
\Che_{3,m}(s,y)=s^3-3s|y|^2,\qquad s\in\R{},\ y\in\R{m-1};
$$
on the unit sphere it coincides with $T_3(s)$, i.e.\ it restricts to $T_3(\cos\varphi)=\cos3\varphi$ on every great circle through $(1,0)$, and $\Che_{3,2}=\re(s+iy)^3$. The computation in the proof of \Cref{the:equi}(iii) holds verbatim for an arbitrary $y\in\alg_c(-1)$: by \Cref{the:peirce}, $(sc+y)^2=(s^2-\frac43|y|^2)c-2sy$, whence, with $s'=s|c|$,
\begin{equation}\label{chebform}
u(s'\hat c+y)=M\,\Che_{3,\,n_{-1}+1}(s',y),\qquad s'\in\R{},\ y\in\alg_c(-1).
\end{equation}
Thus \emph{the restriction of a Hsiang cubic to the subalgebra $\R{}c\oplus\alg_c(-1)$ is a cubic Chebyshev form}, and \eqref{T3} is its restriction to $2$-planes. For Cartan's cubic $\Cartan1$ one has $n_{-1}=2$, so its restriction to the $3$-dimensional subalgebra $\R{}c\oplus\alg_c(-1)$ is $M\,\Che_{3,3}$, which is exactly the ternary cubic of least deviation among \emph{all} ternary cubic forms found in \cite{Agrachev-Kozhasov-Uschmajew} (\Cref{pro:aku}). Note that $\Delta\Che_{3,m}=-6(m-2)s$, so $\Che_{3,m}$ is not harmonic for $m\ge3$, in accordance with the fact that $\R{}c\oplus\alg_c(-1)$ is a subalgebra but not an exact algebra. On the other hand, $|D\Che_{3,m}|^2=9|x|^4$, i.e.\ $\Che_{3,m}$ is a solution of the eikonal equation (for $m\ne2$ one of the reducible ones in the classification of \cite{Tkachev-generalization}, see also \cite{Tkachev-jordan}), and therefore
$$
\Delta_\infty\Che_{3,m}=\tfrac12\scal{D\Che_{3,m}}{D|D\Che_{3,m}|^2}=54\,|x|^2\,\Che_{3,m}.
$$
Thus the Chebyshev forms satisfy the $\infty$-eigenvalue equation of \Cref{def:infeig} with $\lambda=54M^2$, $M=\max_S\Che_{3,m}=1$, exactly as in \eqref{lambdaM}, but they are harmonic only for $m=2$. In this sense the extremal form of \cite{Agrachev-Kozhasov-Uschmajew} is a non-exact $\infty$-eigencubic, and the Hsiang cubics are its harmonic counterparts (\Cref{pro:infeig}).
\end{remark}

\section{The Chebyshev ratio of a Hsiang cubic}\label{sec:problem}

\subsection{The Fischer inner product}
For homogeneous polynomials $p,q$ of the same degree put $\Fs pq=p(\partial)q$. On monomials $\Fs{x^\alpha}{x^\beta}=\alpha!\,\delta_{\alpha\beta}$, so $\Fs{}{}$ is an $O(n)$-invariant inner product. We use two standard facts \cite{Stein-Weiss-book,Axler-Bourdon-Ramey}:
\begin{align}
\Fs{|x|^2r}{q}&=\Fs{r}{\Delta q},\label{adj}\\
\Fs{p}{\scal{x}{a}^k}&=k!\,p(a)\qquad(\deg p=k).\label{eval}
\end{align}

\begin{lemma}\label{lem:fischer}
If $p$ is a harmonic form of degree $k$ and $q$ is any form of degree $k$, then
$$
\int_Spq\,d\sigma=\frac{\Fs pq}{n(n+2)\cdots(n+2k-2)}.
$$
In particular, $\int_Spq\,d\sigma=\Fs pq/N_n$ for $p,q\in\hcub$, with $N_n$ as in \eqref{Nn}.
\end{lemma}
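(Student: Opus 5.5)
The plan is to reduce the identity to the reproducing property of the Fischer product together with a rotation-invariance argument, and then to evaluate the normalizing constant on one test pair.

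\emph{Reduction to $q$ harmonic.} Decompose $q$ as in \eqref{harmdec}: $q=h+|x|^2r$ with $h$ harmonic of degree $k$ and $r$ of degree $k-2$. On $S$ the term $|x|^2r$ restricts to $r$. Since $p$ is a spherical harmonic of degree $k$ and $r|_S$ is a sum of spherical harmonics of degrees at most $k-2$, we get $\int_S pr\,d\sigma=0$. On the Fischer side, \eqref{adj} gives $\Fs{p}{|x|^2r}=\Fs{|x|^2r}{p}=\Fs r{\Delta p}=0$, using the symmetry of $\Fs{}{}$. Both sides therefore vanish on the trace part, and it suffices to treat $q\in\mathscr H_k$.

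\emph{Proportionality.} On the space $\mathscr H_k$, which is $O(n)$-irreducible for $n\ge3$, both $(p,q)\mapsto\int_Spq\,d\sigma$ and $\Fs pq$ are $O(n)$-invariant inner products. By Schur's lemma they are therefore proportional, with a constant $c_k$ depending only on $n$ and $k$.

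For $n=2$ irreducibility over $O(2)$ still holds for $k\ge1$. Alternatively, one can avoid Schur's lemma entirely with a reproducing-kernel argument: both forms reproduce point evaluation via zonal harmonics, by \eqref{eval} applied to the harmonic projection of $\scal xa^k$.

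\emph{The constant.} Computing $c_k$ is the only real work. Take $q=\scal xa^k$ with $|a|=1$ and $p$ harmonic. Then \eqref{eval} gives $\Fs pq=k!\,p(a)$, so it remains to show
$$
\int_S p(x)\scal xa^k\,d\sigma=\frac{k!\,p(a)}{n(n+2)\cdots(n+2k-2)}.
$$
I would prove this with the Gaussian trick. Since $p\,\scal xa^k$ has degree $2k$,
$$
\int_{\R n}p(x)\scal xa^k\,e^{-|x|^2/2}\,dx=\Bigl(\int_0^\infty r^{2k+n-1}e^{-r^2/2}dr\Bigr)\,|S|\int_S p\scal xa^k d\sigma .
$$
The left side is a Wick/Gaussian moment. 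Equivalently, it equals $(2\pi)^{n/2}\,\bigl(e^{\Delta/2}(p\scal xa^k)\bigr)(0)=(2\pi)^{n/2}\frac{\Delta^k(p\scal xa^k)}{2^kk!}$. Expanding $\Delta^k(p\scal xa^k)$ with $\Delta p=0$ and $\Delta\scal xa^k=k(k-1)\scal xa^{k-2}$, only full contractions of $p$'s derivatives against $a$ survive, and one obtains $2^kk!\,p(a)$ in total, a routine but careful count. The radial integral is $2^{k+n/2-1}\Gamma(k+n/2)$, and $|S|=2\pi^{n/2}/\Gamma(n/2)$. The resulting ratio $\Gamma(n/2)/(2^k\Gamma(k+n/2))$ equals $1/\bigl(n(n+2)\cdots(n+2k-2)\bigr)$, as required.

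An alternative that avoids the combinatorial count is to calibrate with $p=q=Z$ for an explicit harmonic such as $\re(x_1+ix_2)^k$. This gives $\Fs pp=2^{k-1}k!\cdot 2$ (up to the obvious computation), while the sphere integral follows from the moments $\int_S x_1^{2a}x_2^{2b}d\sigma$. The case $k=3$ with $N_n=n(n+2)(n+4)$ is then the special case stated.
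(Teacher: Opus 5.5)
Your reduction to harmonic $q$ and the Schur-lemma proportionality are exactly the paper's argument. The difference is the calibration of the constant. The paper evaluates both sides on the single pair $p=q=x_1\cdots x_k$, where $\Fs pp=1$ and the sphere integral is a product of second moments. This needs $k\le n$, and the paper checks $(n,k)=(2,3)$ separately with $\re z^3$. Your Gaussian computation against $q=\scal xa^k$ works uniformly in $n$ and $k$. Done for all $a$, it even makes the Schur step unnecessary: both sides are linear in $q$, and the powers $\scal xa^k$ span all forms of degree $k$.

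However, the count you state, which you call the only real work, is off by a factor $k!$. The correct value is
\[
\Delta^k\bigl(p\,\scal xa^k\bigr)=2^k(k!)^2\,p(a).
\]
To see this, use $\Delta^kf=\Fs{|x|^{2k}}f=\Fs f{|x|^{2k}}$, which gives $\Delta^k(p\scal xa^k)=\partial_a^k\,p(\partial)|x|^{2k}$. Then apply Hobson's formula $p(\partial)|x|^{2k}=2^kk!\,p$ for harmonic $p$ of degree $k$, and $\partial_a^kp=k!\,p(a)$.

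With your value $2^kk!\,p(a)$, your chain of identities gives $\int_Sp\scal xa^k\,d\sigma=p(a)/(n(n+2)\cdots(n+2k-2))$. The required value is $k!\,p(a)/(n(n+2)\cdots(n+2k-2))$. For instance, take $k=2$, $p=x_1x_2$ and $a=(e_1+e_2)/\sqrt2$. Then $\int_Sp\scal xa^2\,d\sigma=\int_Sx_1^2x_2^2\,d\sigma=\frac1{n(n+2)}=\frac{2\,p(a)}{n(n+2)}$. So ``as required'' does not follow from what you wrote, though it does with the corrected count.

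The same kind of slip occurs in your alternative calibration. For $p=\re(x_1+ix_2)^k$ one has $\Fs pp=2^{k-1}k!$, not $2^kk!$; for $k=3$ this is the value $24$ used in the paper. This matches $\int_Sp^2\,d\sigma=\frac12\int_S(x_1^2+x_2^2)^k\,d\sigma=2^{k-1}k!/(n(n+2)\cdots(n+2k-2))$.
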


\begin{proof}
For harmonic $q$ both sides are $O(n)$-invariant inner products on the irreducible $O(n)$-module of harmonic forms of degree $k$, hence proportional; the constant is found on $p=q=x_1\cdots x_k$ if $k\le n$ (then $\Fs pp=1$ and $\int_Sx_1^2\cdots x_k^2\,d\sigma=1/(n(n+2)\cdots(n+2k-2))$), and on $p=q=\re z^3$ if $n=2$, $k=3$ (then $\Fs pp=24$ and $\int\cos^23\varphi=\frac12$). For general $q=h+|x|^2r$ with $h$ harmonic, both sides vanish on $|x|^2r$: the left one because $r|_S$ is a sum of spherical harmonics of lower degree, the right one by \eqref{adj}.
\end{proof}

\begin{lemma}[Funk--Hecke formula for cubics {\cite[Chap.~IV]{Stein-Weiss-book}}]\label{lem:zonal}
For $a\in S$ the harmonic part \eqref{harmdec} of $\scal xa^3$ is
\begin{equation}\label{zonal}
Z_a(x)=\scal xa^3-\frac{3}{n+2}|x|^2\scal xa,
\end{equation}
and
\begin{equation}\label{zonalquad}
\int_SpZ_a\,d\sigma=\frac6{N_n}\,p(a)\qquad\text{for all } p\in\hcub.
\end{equation}
Hence $K_a=\frac{N_n}6Z_a$ is the reproducing kernel of $\hcub\subset L^2(\sigma)$, and $K_a(a)=\dim\hcub=\frac{n(n-1)(n+4)}6$.
\end{lemma}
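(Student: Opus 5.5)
The lemma has three parts: the form of the harmonic part $Z_a$, the reproducing property \eqref{zonalquad}, and the value $K_a(a)=\dim\hcub$. We treat them in this order.

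\emph{The harmonic part.} Write $\scal xa^3=h+|x|^2\ell$ as in \eqref{harmdec}, with $\ell=\Delta p/(2(n+2))$. For $|a|=1$ one has $\Delta\scal xa^3=6|a|^2\scal xa=6\scal xa$. Hence $\ell=\frac3{n+2}\scal xa$, which gives \eqref{zonal}. As a check, $\Delta(|x|^2\scal xa)=2n\scal xa+4\scal xa=2(n+2)\scal xa$, so $\Delta Z_a=6\scal xa-\frac3{n+2}\cdot2(n+2)\scal xa=0$.

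\emph{The reproducing property.} Take $p\in\hcub$. By \Cref{lem:fischer} with $k=3$, the pairing $\int_S pq\,d\sigma$ equals $\Fs pq/N_n$ for every cubic $q$. The trace part $|x|^2\ell$ drops out because of \eqref{adj} and $\Delta p=0$. Therefore $\int_SpZ_a\,d\sigma=\int_Sp\scal xa^3\,d\sigma=\Fs p{\scal xa^3}/N_n$. By \eqref{eval} this equals $3!\,p(a)/N_n=6p(a)/N_n$, which is \eqref{zonalquad}. Since $Z_a\in\hcub$, the function $K_a=\frac{N_n}6Z_a$ is the reproducing kernel.

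\emph{The diagonal value.} Let $\{h_j\}$ be an orthonormal basis of $\hcub$ in $L^2(\sigma)$. Then $K_a(x)=\sum_jh_j(a)h_j(x)$, so $K_a(a)=\sum_jh_j(a)^2$. This is $O(n)$-invariant, so it is constant in $a\in S$. Integrating over $S$ gives $K_a(a)=\sum_j\|h_j\|_2^2=\dim\hcub$. Alternatively, evaluate directly: $Z_a(a)=1-\frac3{n+2}=\frac{n-1}{n+2}$, so $K_a(a)=\frac{n(n+2)(n+4)}6\cdot\frac{n-1}{n+2}=\frac{n(n-1)(n+4)}6$. This agrees with $\dim\hcub=\binom{n+2}3-n$.

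No step is a real obstacle. The only care needed is checking that \Cref{lem:fischer} applies when only $p$ is harmonic. That is exactly how the lemma is stated, so it is fine.
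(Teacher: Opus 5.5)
Your proposal is correct and follows the paper's proof essentially step by step. You get $\ell=\frac3{n+2}\scal xa$ from $\Delta\scal xa^3=6\scal xa$, obtain the reproducing identity from \Cref{lem:fischer} together with \eqref{eval}, and compute the diagonal value as $K_a(a)=\frac{N_n}6Z_a(a)=\frac{N_n}6\cdot\frac{n-1}{n+2}$. The only addition is your check that this value equals $\dim\hcub$, via the orthonormal-basis trace argument or the count $\binom{n+2}3-n$; the paper leaves that identification implicit.
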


\begin{proof}
Since $\Delta\scal xa^3=6\scal xa$, \eqref{harmdec} gives $\ell=\frac{3}{n+2}\scal xa$. By \Cref{lem:fischer} and \eqref{eval},
$$
\int_S pZ_a\,d\sigma=\int_S p\,\scal xa^3\,d\sigma=\frac{\Fs{p}{\scal xa^3}}{N_n}=\frac{6\,p(a)}{N_n}.
$$
Finally $K_a(a)=\frac{N_n}6\cdot\frac{n-1}{n+2}=\frac{n(n-1)(n+4)}{6}$.
\end{proof}

\subsection{Trivial bounds}
Since $\sigma$ is a probability measure and $p$ is odd, $R(p)>1$. On the other hand, $|p(a)|=|\int pK_a\,d\sigma|\le\|p\|_2\|K_a\|_2$ and $\|K_a\|_2^2=K_a(a)$, so
\begin{equation}\label{trivial}
1<R(p)\le\dim\hcub,
\end{equation}
with equality on the right exactly for the zonal harmonics. The zonal harmonics are thus the ``peakiest'' harmonic cubics, and \eqref{kappa0} asks for the opposite extreme. For $n=2$ every harmonic cubic is a rotated multiple of $\re z^3$, so $R\equiv2=\kappa_2$.

\begin{theorem}\label{the:ratio}
For a normalized Hsiang cubic $u$ on $\R{n}$,
\begin{equation}\label{ratio}
\|u\|_2^2=\frac{\trace \Kill}{6N_n},\qquad R(u)=\frac{2N_n}{9\trace \Kill}.
\end{equation}
In the non-normalized scale $R(u)=-\theta N_n/(9\trace \Kill)$. If $\alg$ is an Einstein Hsiang algebra, $\Kill=\nu\Id$, then $R(u)=\frac{2(n+2)(n+4)}{9\nu}$.
\end{theorem}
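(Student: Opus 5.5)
The plan is to compute $\|u\|_2^2$ through the Fischer inner product and then divide $\|u\|_\infty^2=M^2$ by it. Since $u$ is harmonic (\Cref{the:exact}), \Cref{lem:fischer} gives $\|u\|_2^2=\Fs uu/N_n$. So the whole task reduces to evaluating $\Fs uu$ in terms of the multiplication tensor.

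Write $u(x)=\frac16\Tens(x,x,x)=\frac16\sum_{i,j,k}\Tens_{ijk}x_ix_jx_k$ with $\Tens$ symmetric. For a cubic form $p=\sum T_{ijk}x_ix_jx_k$ with symmetric coefficients, the Fischer pairing is $\Fs pp=p(\partial)p=6\sum_{i,j,k}T_{ijk}^2$. One can check this directly on monomials, since $\Fs{x^\alpha}{x^\alpha}=\alpha!$ and the multinomial multiplicities combine with $\alpha!$ to give $3!$. Taking $T=\frac16\Tens$ and using \eqref{trA},
$$
\Fs uu=6\cdot\tfrac1{36}\sum_{i,j,k}\Tens(e_i,e_j,e_k)^2=\tfrac16\,\|\Tens\|^2=\tfrac16\trace\Kill .
$$
Hence $\|u\|_2^2=\trace\Kill/(6N_n)$. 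In the Bombieri normalization this is the stated identity $\|u\|_B^2=\frac1{36}\trace\Kill$, with $\Fs uu=6\|u\|_B^2$.

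For the numerator, \Cref{the:equi}(i) together with \eqref{Mnorm} gives $\|u\|_\infty=M=\frac1{6|c|}$ with $|c|^2=\frac34$, so $M^2=\frac1{27}$. Then
$$
R(u)=\frac{M^2}{\|u\|_2^2}=\frac{6N_n}{27\trace\Kill}=\frac{2N_n}{9\trace\Kill}.
$$

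Two rescalings remain. For a non-normalized algebra, note that $\Tens$, and hence $\Kill$, rescales under $u\mapsto tu$ while $R$ does not. The cleanest route is to express $M^2$ via \eqref{lambdaM}, namely $M^2=\lambda/54=-\theta/54$. Homogeneity (both $M^2$ and $\trace\Kill$ scale like $t^2$) then gives
$$
R=\frac{(-\theta/54)\,6N_n}{\trace\Kill}=\frac{-\theta N_n}{9\trace\Kill},
$$
which reduces to the formula above at $\theta=-2$. In the Einstein case, substitute $\trace\Kill=n\nu$ and $N_n=n(n+2)(n+4)$ to get $R(u)=\frac{2(n+2)(n+4)}{9\nu}$.

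The main point requiring care is the constant in $\Fs pp=6\sum T_{ijk}^2$. I would verify it on $x_1x_2x_3$: there $T_{123}=\frac16$ over its six permutations, so $6\sum T^2=6\cdot6\cdot\frac1{36}=1=\Fs pp$. As a cross-check of the final formula, $x_1x_2x_3$ has $\trace\Kill=6$ and $N_3=105$, which gives $R=\frac{35}9$ and matches \Cref{rem:kelloggdefect}.
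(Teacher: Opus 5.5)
Your proposal is correct and follows the paper's proof essentially verbatim. Harmonicity and \Cref{lem:fischer} give $\|u\|_2^2=\Fs uu/N_n$, and the same differentiation count gives $\Fs uu=\frac16\|\Tens\|^2=\frac16\trace\Kill$; then $M^2=\frac1{27}$ in the normalized scale, or $M^2=-\theta/54$ in general via \eqref{lambdaM}, finishes the argument, including the Einstein specialization $\trace\Kill=n\nu$.
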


\begin{proof}
Since $u$ is harmonic, \Cref{lem:fischer} gives $\|u\|_2^2=\Fs uu/N_n$. Writing $u=\frac16\sum \Tens_{ijk}x_ix_jx_k$ and differentiating, each of the $3!$ permutations contributes $\sum \Tens_{ijk}^2$, so $\Fs uu=\frac{6}{36}\|\Tens\|^2=\frac16\trace \Kill$ by \eqref{trA}. By \Cref{the:equi}, $\|u\|_\infty=M$ and $M^2=\frac1{27}$ (generally $M^2=-\theta/54$ by \eqref{lambdaM}).
\end{proof}

In a fixed dimension all Hsiang cubics have the same sup-norm $M$, so by \eqref{ratio} the flattest one is the one with the largest $\trace \Kill$. \Cref{tab:ratio} lists the examples of \Cref{sec:hsiang}; $\trace \Kill=6\Fs uu$ is computed from the monomial expansion.

\begin{table}[ht]\small
\renewcommand\arraystretch{1.25}
\begin{tabular}{l|c|c|c|c|c|l}
$u$ & $n$ & $\mathsf{pr}(\alg)$ & $\trace \Kill$ & $R(u)$ & $\dim\hcub$ & spectrum of $\Kill$\\\hline
$\re z^3$ & 2 & $(1,0,0)$ & $\frac{16}3$ & $2$ & $2$ & $\frac83,\frac83$\\
$\Pol(1,2)$, $x_1x_2x_3$ & 3 & $(0,2,0)$ & $6$ & $\frac{35}9$ & $7$ & $2,2,2$\\
$\Pol(2,2)$, Lawson & 4 & $(1,1,1)$ & $12$ & $\frac{32}9$ & $16$ & $2,2,4,4$\\
$\Pol(1,4)$ & 5 & $(0,3,1)$ & $12$ & $\frac{35}6$ & $30$ & $2,2,2,2,4$\\
Cartan $\Cartan1$ & 5 & $(2,0,2)$ & $\frac{70}3$ & $3$ & $30$ & $\frac{14}3$ ($\times5$)\\
$\det$ on $M_3(\R{})$ & 9 & $(0,5,3)$ & $36$ & $\frac{143}{18}$ & $156$ & $4$ ($\times9$)\\
$\Pol(2,8)$ & 10 & $(1,4,4)$ & $48$ & $\frac{70}9$ & $210$ & $4$ ($\times8$), $8$ ($\times2$)\\
$\mathrm{trip}(\operatorname{Im}\mathbb O)$ & 21 & $(4,5,11)$ & $252$ & $\frac{575}{54}$ & $1750$ & $12$ ($\times21$)
\end{tabular}
\medskip
\caption{Peirce profile \eqref{profile}, Killing trace and Chebyshev ratio of the Hsiang cubics of \Cref{sec:hsiang}; $\Pol(m_0,m_1)$ is the polar algebra of type $(m_0,m_1)$, see \Cref{def:polar}, and $\mathrm{trip}(\operatorname{Im}\mathbb O)$ is the $21$-dimensional tripling of \cite{Fox-Tkachev}, see \Cref{sec:peetre}.}\label{tab:ratio}
\end{table}

\noindent The column $\dim\hcub$ in \Cref{tab:ratio} is the value of $R$ at the zonal harmonics, cf.\ \eqref{trivial}; it shows how far the Hsiang cubics are from this extreme. In dimension five Cartan's cubic is much flatter than the cubic of $\Pol(1,4)$; we shall see in \Cref{sec:local} that the latter is not even a local minimizer.

\subsection{Comparison with all cubic forms}
For a cubic form $p=\sum_{|\alpha|=3}c_\alpha x^\alpha$ the \emph{Bombieri norm} is $\|p\|_B^2=\sum_{|\alpha|=3}\binom{3}{\alpha}^{-1}c_\alpha^2=\frac16\Fs pp$; equivalently, $\|p\|_B$ is the Frobenius norm of the symmetric $3$-tensor $S$ with $p(x)=S(x,x,x)$, while $\|p\|_\infty$ is its spectral norm. The quotient $\|p\|_\infty/\|p\|_B$ is the \emph{rank-one approximation ratio} of $p$, and its minimum over all cubic forms in $n$ variables measures how far symmetric tensors can be from rank one \cite{Agrachev-Kozhasov-Uschmajew}. If $p\in\hcub$, then by \Cref{lem:fischer}
\begin{equation}\label{bombieri}
R(p)=\frac{N_n}6\Bigl(\frac{\|p\|_\infty}{\|p\|_B}\Bigr)^2 ,
\end{equation}
so \eqref{kappa0} is the rank-one approximation ratio problem restricted to harmonic forms, i.e.\ to \emph{traceless} symmetric tensors.

\begin{remark}[Bombieri versus $L^2$]\label{rem:bombieri}
The proportionality \eqref{bombieri} holds only on $\hcub$. For a general cubic form write $p=h+|x|^2\ell$ as in \eqref{harmdec}. Since $\Fs{h}{|x|^2\ell}=\Fs{\Delta h}{\ell}=0$ and $\Fs{|x|^2\ell}{|x|^2\ell}=\Fs{\ell}{\Delta(|x|^2\ell)}=(2n+4)\Fs\ell\ell$ by \eqref{adj}, and since $h$ and $\ell$ are $L^2$-orthogonal on $S$,
$$
\|p\|_B^2=\frac{\Fs hh}{6}+\frac{(n+2)\Fs\ell\ell}{3},\qquad
\|p\|_2^2=\frac{\Fs hh}{N_n}+\frac{\Fs\ell\ell}{n}.
$$
Thus the two norms are proportional on a cubic form only if it is harmonic or of the form $|x|^2\ell$, and the problem of minimizing $\|p\|_\infty^2/\|p\|_2^2$ over \emph{all} cubic forms differs from that of \cite{Agrachev-Kozhasov-Uschmajew}. For instance, on the unit sphere $\Che_{3,3}=4x_1^3-3x_1|x|^2$ coincides with $T_3(x_1)$; here $\|\Che_{3,3}\|_B^2=\frac{32}5+\frac35=7$, while $\|\Che_{3,3}\|_2^2=\frac{17}{35}$.
\end{remark}

\begin{proposition}\label{pro:aku}
$\kappa_2=2$ and $\kappa_3\ge\frac52$.
\end{proposition}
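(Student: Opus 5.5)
For $\kappa_2=2$ there is nothing new to prove: as observed after \eqref{trivial}, every harmonic cubic on $\R2$ is, up to a rotation and a positive factor, $\re z^3=\cos3\varphi$ on the unit circle. Hence $\|p\|_\infty^2=1$ and $\|p\|_2^2=\int\cos^23\varphi=\frac12$, so $R\equiv2$ on $\hcub(\R2)$. This also agrees with the equality case $n=2$ of \Cref{the:cartanmin}, where $\frac{n+4}3=2$.

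For the bound on $\kappa_3$ I do not use any Hsiang theory. Instead I pass to the Bombieri normalization via \eqref{bombieri} and import the solution of the rank-one approximation ratio problem for ternary cubics from \cite{Agrachev-Kozhasov-Uschmajew}. For $n=3$ we have $N_3=3\cdot5\cdot7=105$, so \eqref{bombieri} gives, for every $0\ne p\in\hcub(\R3)$,
$$
R(p)=\frac{105}6\Bigl(\frac{\|p\|_\infty}{\|p\|_B}\Bigr)^2 .
$$
Therefore it suffices to bound $\|p\|_\infty/\|p\|_B$ from below. The bound is taken over \emph{all} ternary cubic forms, since harmonicity is only a further restriction and can only increase the infimum.

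The cited theorem states that the minimum of $\|p\|_\infty/\|p\|_B$ over nonzero ternary cubics is attained by the Chebyshev form $\Che_{3,3}$ of \Cref{rem:aku}. Its value is read off from the data already computed in \Cref{rem:bombieri}. On the sphere $\Che_{3,3}$ coincides with $T_3(x_1)$, so $\|\Che_{3,3}\|_\infty=1$, and $\|\Che_{3,3}\|_B^2=7$. Thus
$$
\|p\|_\infty^2\ \ge\ \tfrac17\|p\|_B^2\qquad\text{for every ternary cubic } p,
$$
and for harmonic $p$ this yields $R(p)\ge\frac{105}{6\cdot7}=\frac52$.

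The only delicate point is matching normalizations. I must check that the Bombieri norm in \cite{Agrachev-Kozhasov-Uschmajew} is the Frobenius norm of the symmetric tensor, as fixed before \eqref{bombieri}, namely $\|p\|_B^2=\frac16\Fs pp$, and not some other multiple. I must also check that their extremal value is exactly $1/\sqrt7$. I would verify the latter independently by recomputing $\|\Che_{3,3}\|_B^2$ from the monomial expansion $4x_1^3-3x_1(x_1^2+x_2^2+x_3^2)=x_1^3-3x_1x_2^2-3x_1x_3^2$. With weights $\binom3\alpha^{-1}$ this gives $1+3+3=7$, consistent with \Cref{rem:bombieri}. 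Finally, I note that $\frac52>\frac73$, the value of \eqref{cartanbound} for $n=3$, which confirms that the imported bound is the stronger one.
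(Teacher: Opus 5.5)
Your proof is correct, and for $\kappa_3\ge\frac52$ it is exactly the paper's argument: the Agrachev--Kozhasov--Uschmajew bound $\|p\|_\infty/\|p\|_B\ge1/\sqrt7$ for all ternary cubics, transferred to $\hcub(\R3)$ via \eqref{bombieri} with $N_3=105$. The only difference is at $n=2$: the paper also invokes the binary case of \cite{Agrachev-Kozhasov-Uschmajew} ($\|p\|_\infty/\|p\|_B\ge\frac12$, giving $R\ge\frac{48}{6}\cdot\frac14=2$, attained by $\re z^3$), whereas you use the elementary fact, recorded after \eqref{trivial}, that $R\equiv2$ on $\hcub(\R2)$, which is more self-contained.
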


\begin{proof}
By \cite[Theorems~1.1 and~1.2]{Agrachev-Kozhasov-Uschmajew}, every non-zero cubic form satisfies $\|p\|_\infty/\|p\|_B\ge\frac12$ if $n=2$ and $\|p\|_\infty/\|p\|_B\ge\frac1{\sqrt7}$ if $n=3$, with equality for $\re(x_1+ix_2)^3$ and for $\Che_{3,3}$, respectively. Restricting to harmonic forms and using \eqref{bombieri} with $N_2=48$ and $N_3=105$ gives $R(p)\ge\frac{48}{6}\cdot\frac14=2$ for $n=2$ and $R(p)\ge\frac{105}{6}\cdot\frac17=\frac52$ for $n=3$. For $n=2$ the bound is attained by the harmonic form $\re z^3$.
\end{proof}

For $n=2$ the extremal binary cubic of \cite{Agrachev-Kozhasov-Uschmajew} is harmonic, so the two problems have the same solution. For $n=3$ they differ: the extremal ternary cubic $\Che_{3,3}$ is not harmonic, and \Cref{conj:kappa} asserts that the harmonic constraint raises the extremal value from $\frac52$ to $\frac{35}9$, the extremal form changing from $\Che_{3,3}$ to $x_1x_2x_3$ ($\Pol(1,2)$). By \Cref{rem:aku}, both extremal forms come from Hsiang algebras: $x_1x_2x_3$ is a Hsiang cubic, and $\Che_{3,3}$ is the restriction of Cartan's cubic $\Cartan1$ to the subalgebra $\R{}c\oplus\alg_c(-1)$.

\begin{remark}[Chebyshev forms are not extremal for $n\ge5$]\label{rem:akuconj}
In \cite[Conjecture~1]{Agrachev-Kozhasov-Uschmajew} it is conjectured that the Chebyshev forms are extremal in every dimension; for cubics this reads
$$
\frac{\|p\|_\infty^2}{\|p\|_B^2}\ \ge\ \frac{\|\Che_{3,n}\|_\infty^2}{\|\Che_{3,n}\|_B^2}=\frac1{3n-2}\qquad\text{for all cubic forms }p\ne0\text{ on }\R{n},
$$
the value on the right following from $\|\Che_{3,n}\|_\infty=1$ and $\|\Che_{3,n}\|_B^2=1+3(n-1)$. For a Hsiang cubic, \eqref{bombieri} and \Cref{the:ratio} give
\begin{equation}\label{hsiangbombieri}
\frac{\|u\|_\infty^2}{\|u\|_B^2}=\frac{6R(u)}{N_n}=\frac{4}{3\trace \Kill}.
\end{equation}
For Cartan's cubic $\Cartan1$ ($n=5$, $\trace \Kill=\frac{70}3$) this equals $\frac{2}{35}<\frac1{13}$, so the conjecture fails for cubic forms on $\R5$. The same happens for the other Hsiang cubics with a large Killing trace in \Cref{tab:ratio}: $\det X$ gives $\frac1{27}<\frac1{25}$ ($n=9$), $\Pol(2,8)$ gives $\frac1{36}<\frac1{28}$ ($n=10$), and $\mathrm{trip}(\operatorname{Im}\mathbb O)$ gives $\frac1{189}<\frac1{61}$ ($n=21$). In $\R4$, on the other hand, the Lawson cubic ($\Pol(2,2)$) gives $\frac19>\frac1{10}$, and the case $n=4$ remains open. In fact the conjecture fails for every $n\ge5$. If $p_1,p_2$ are cubic forms on orthogonal subspaces with $\|p_1\|_\infty=\|p_2\|_\infty$, then $\|p_1+p_2\|_\infty=\|p_1\|_\infty$ (since $s^3+t^3\le(s^2+t^2)^{3/2}$ for $s,t\ge0$) and the Bombieri norms add in squares, so the reciprocal ratios $\|p\|_B^2/\|p\|_\infty^2$ add. By \eqref{hsiangbombieri} and \Cref{cor:killingbound} this reciprocal equals $\frac12n(n+2)$ for Cartan's cubic on $\R n$ and $4$ for $\re z^3$, and comparing with $3n-2$ one checks that $\Cartan1$ (for $n=5,6$), $\Cartan1\oplus\re z^3$ ($n=7$), $\Cartan2$ ($8\le n\le13$), $\Cartan4$ ($14\le n\le37$) and orthogonal sums of $k$ copies of $\Cartan8$ ($26k\le n\le121k$) beat the Chebyshev form. Thus Hsiang algebras with much multiplication are flatter than the Chebyshev forms, even among all cubic forms.
\end{remark}

\begin{remark}[Heuristics]\label{rem:heuristics}
Several facts proved in this paper point towards Hsiang cubics as the extremal harmonic cubics.
\begin{enumerate}
\item \emph{Chebyshev principle.} A minimizer of \eqref{kappa0} is stationary, and by \Cref{the:kolm} and the remark following it, a stationary harmonic cubic which is not zonal must equioscillate on a large set: it must be a positive average of zonal harmonics with poles in its own extremal set. Hsiang cubics are the harmonic cubics in which this happens in the most rigid way: all positive critical values on the sphere coincide, and the extremal set is the whole idempotent variety (\Cref{the:equi}). For $n=2$ the analogy is complete, since $\kappa_2=2$ is attained exactly by $\re z^3$.
\item \emph{Flatness is measured by the multiplication.} Among Hsiang cubics of a given dimension, \eqref{ratio} shows that $R$ decreases as $\trace \Kill=\|\Tens\|^2$ increases. In $\R5$ Cartan's cubic has almost twice the value of $\trace \Kill$ of $\Pol(1,4)$, and \Cref{pro:polarsaddle} shows that $R$ indeed decreases from the cubic of $\Pol(1,4)$ in the direction of the Lawson component of Cartan's cubic, cf.\ \eqref{cartansplit}.
\item \emph{Rigidity.} For $x_1x_2x_3$ ($\Pol(1,2)$), the Lawson cubic ($\Pol(2,2)$) and Cartan's cubic $\Cartan1$ every direction which vanishes on the idempotent variety is an infinitesimal rotation, $D=o(u)$; for the first two this makes them strict local minimizers (\Cref{cor:strict}), and for $\Cartan1$ it is consistent with its global minimality (\Cref{rem:cartanlocal}), whereas the cubic of $\Pol(1,4)$ is not a local minimizer (\Cref{pro:polarsaddle}). Moreover, the alternation sets of $x_1x_2x_3$ and of Cartan's cubic $\Cartan1$ carry spherical $3$-designs (\Cref{cor:design}), the strongest form of equidistribution compatible with the degree.
\end{enumerate}
\end{remark}

\begin{conjecture}\label{conj:kappa}
$\kappa_3=\frac{35}9$ and $\kappa_4=\frac{32}9$, and the minimum in \eqref{kappa0} is attained exactly on the $O(n)$-orbits (up to scale) of $x_1x_2x_3$ ($\Pol(1,2)$) and of the Lawson cubic ($\Pol(2,2)$), respectively. (The analogous statement $\kappa_5=3$ for Cartan's cubic $\Cartan1$ is \Cref{the:cartanmin}.)
\end{conjecture}

\subsection{Block-trilinear cubics and trilinear forms}\label{sec:peetre}
\emph{The analytic side.} The trilinear analogue of the comparison in \Cref{pro:aku} goes back to Cobos, K\"uhn and Peetre \cite{Cobos-Kuhn-Peetre-Duke,Cobos-Kuhn-Peetre-Sp}, who developed Schatten--von Neumann classes of multilinear forms on Hilbert spaces. For a real trilinear form $T$ on $(\R{k})^3$ let $\|T\|=\sup\{|T(x,y,z)|:|x|=|y|=|z|=1\}$ and let $\|T\|_2$ be its Hilbert--Schmidt norm, and let $d(k)$ be the best constant in
\begin{equation}\label{ckp}
\|T\|_2^2\le d(k)\,\|T\|^2 .
\end{equation}
For bilinear forms the analogous constant is $k$, by the singular value decomposition; for trilinear forms there is no such decomposition, and $d(k)$ is not known in general. One always has $d(k)\le k^2$, and $d(k)=k^2$ for $k=1,2,4,8$, the extremal form being $\re(xyz)$ on the normed division algebra $\mathbb K_k$, whereas $7\le d(3)\le7.36$ \cite{Kuhn-Peetre-embedding}.

\emph{The algebraic side.} A trilinear form on $(\R{k})^3$ is the same thing as a cubic form on $\R{3k}$ of a special kind.

\begin{definition}\label{def:blocktri}
A cubic form $u$ on $\R{n}$ is called \emph{block-trilinear} if its metrized algebra admits an orthogonal decomposition
\begin{equation}\label{blocktri}
\alg=\alg_1\oplus\alg_2\oplus\alg_3,\qquad \alg_i\alg_i=\{0\},\qquad \alg_i\alg_j\subset\alg_k\quad\text{for }\{i,j,k\}=\{1,2,3\}.
\end{equation}
\end{definition}

Since $\scal{xy}z$ is the full polarization of $6u$, \eqref{blocktri} holds if and only if, in coordinates $x=(x_1,x_2,x_3)$ adapted to the decomposition, $u(x)=T(x_1,x_2,x_3)$ for a trilinear form $T$ on $\alg_1\times\alg_2\times\alg_3$; then $\scal{x_iy_j}{z_k}=T(\dots)$ with the arguments in the appropriate order. A block-trilinear cubic has degree at most one in each block of variables and is therefore harmonic, but it need not be a Hsiang cubic. Two families of Hsiang cubics are block-trilinear, and both are governed by Hurwitz's theorem.
\begin{enumerate}
\item[(a)] \emph{Composition: the mutants.} For $T(x,y,z)=\re\bigl((xy)z\bigr)$ on a normed division algebra $\mathbb K_k$, $k\in\{1,2,4,8\}$, the cubic $u$ is the mutant of \Cref{sec:polar}. Here every slice $T(x,\cdot,\cdot)$ is $|x|$ times an orthogonal map, i.e.\ $L(x_i)^2=|x_i|^2$ on $\alg_j\oplus\alg_k$: in the language of \cite{Tkachev-book}, $(\alg_1,\alg_2,\alg_3)$ is a \emph{triality system}. Accordingly $\|T\|=1$ and $\|T\|_2^2=k^2$, so these are exactly the extremal forms of Cobos, K\"uhn and Peetre.
\item[(b)] \emph{Vector products: the triplings.} Let $\times$ be a vector product on $\R{k}$, $k\in\{3,7\}$. The tripling construction of \cite{Fox-Tkachev} associates with the vector product algebra $(\R{k},\times)$ the algebra $\R{k}\otimes\R3$, the tensor product with the vector product algebra $\R3$; for $x=\sum_ix_i\otimes e_i$ one finds $x^2=2\sum_{\mathrm{cyclic}}(x_i\times x_j)\otimes e_k$, and its cubic form is
$$
u(x_1,x_2,x_3)=\scal{x_1\times x_2}{x_3},
$$
block-trilinear with an alternating form $T$. For $k=3$ this is $u=\det[x_1\,x_2\,x_3]$, i.e.\ $\det X$ on $M_3(\R{})$ of \Cref{sec:hsiang}; for $k=7$ it is the $21$-dimensional exceptional Hsiang algebra of \cite{Fox-Tkachev}. Here the slice $T(x,\cdot,\cdot)$ is skew-symmetric with kernel $\R{}x$, so it is never orthogonal: $\|T\|=1$ but $\|T\|_2^2=k(k-1)$.
\end{enumerate}
Thus the analytic quantity $\|T\|_2^2/\|T\|^2$ measures how far the decomposition \eqref{blocktri} is from a triality system, and the comparison between (a) and (b) is the comparison between compositions and vector products in Hurwitz's theorem. The next proposition translates \eqref{ckp} into the Chebyshev ratio.

\begin{proposition}\label{pro:peetre}
Let $u(x_1,x_2,x_3)=T(x_1,x_2,x_3)$ be a non-zero block-trilinear cubic on $\R{n}=(\R{k})^3$, so that $n=3k$ and $N_n=3k(3k+2)(3k+4)$. Then $u\in\hcub$ and
\begin{equation}\label{peetre}
R(u)=\frac{N_n}{27}\,\frac{\|T\|^2}{\|T\|_2^2}\ \ge\ \frac{N_n}{27\,k^2}.
\end{equation}
Equality holds if and only if $k\in\{1,2,4,8\}$ and, up to orthogonal changes of the variables $x_1,x_2,x_3$ and scaling, $u=\re\bigl((x_1x_2)x_3\bigr)$ on $\mathbb K_k^3$, i.e.\ $u$ is a mutant Hsiang cubic.
\end{proposition}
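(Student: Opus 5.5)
Harmonicity comes first. In block coordinates $u$ is affine in each block variable $x_i\in\R{k}$, so every pure second derivative $\partial^2u/\partial x_{i,a}^2$ vanishes and $\Delta u=0$. Hence $u\in\hcub$.

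Next I compute the two norms. For the uniform norm on $S$, write $x=(x_1,x_2,x_3)$ with $|x_1|^2+|x_2|^2+|x_3|^2=1$. Trilinearity gives $|u(x)|\le\|T\|\,|x_1||x_2||x_3|$. By AM--GM, $|x_1||x_2||x_3|\le 3^{-3/2}$, with equality at $|x_i|=1/\sqrt3$. Choosing the unit directions that attain $\|T\|$ and scaling each by $1/\sqrt3$ shows the bound is attained, so $\|u\|_\infty^2=\|T\|^2/27$. For the $L^2$ norm, I use \Cref{lem:fischer}, which gives $\|u\|_2^2=\Fs uu/N_n$. In the monomial expansion $u=\sum T_{abc}x_{1,a}x_{2,b}x_{3,c}$ all monomials are distinct and squarefree, so $\Fs uu=\sum T_{abc}^2=\|T\|_2^2$. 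Dividing yields the identity in \eqref{peetre}. The inequality then follows from $\|T\|_2^2\le k^2\|T\|^2$. This holds because each slice $T(x,\cdot,\cdot)$, viewed as a $k\times k$ matrix $A(x)$, satisfies $\|A(x)\|_{HS}^2\le k\|A(x)\|_{op}^2\le k\|T\|^2|x|^2$. Summing over an orthonormal basis in the first slot gives $\|T\|_2^2\le k\cdot k\|T\|^2$.

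The equality case is the main step. Equality forces $\|A(e_a)\|_{HS}^2=k\|A(e_a)\|_{op}^2$ with $\|A(e_a)\|_{op}=\|T\|$ for every basis vector. Since the basis is arbitrary, the same holds for every unit $x$, so normalizing $\|T\|=1$ makes every $A(x)$ with $|x|=1$ orthogonal. The bilinear map $(x,y)\mapsto A(x)^{T}y$ is then norm multiplicative: $|A(x)^Ty|=|x||y|$. This is a composition $\R{k}\times\R{k}\to\R{k}$, and Hurwitz's theorem gives $k\in\{1,2,4,8\}$. Moreover, after orthogonal changes of $x_1,x_2,x_3$, the composition can be brought to the multiplication of $\mathbb K_k$. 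Concretely, I would first fix unit $e,e'$ and use $A(e)$ together with $A(\cdot)^T e'$ to normalize so that a unit exists. Then I would invoke the classical fact that a normed algebra with unit is $\mathbb K_k$. This gives $T(x,y,z)=\scal{xy}{\bar z}$ up to orthogonal maps, i.e.\ $u=\re((x_1x_2)x_3)$ after replacing $x_3$ by $\bar x_3$, an orthogonal change.

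The converse is direct: for the mutant each $A(x)$ is $|x|$ times an orthogonal matrix, as noted in (a), so $\|T\|=1$ and $\|T\|_2^2=k^2$, and equality holds. The delicate step is the reduction of an arbitrary composition to the division-algebra product modulo independent orthogonal changes in the three slots. I would handle it by the standard unital-isotope trick, $x\cdot y:=\phi(x)\psi(y)$ with $\phi,\psi$ the inverses of the orthogonal maps $L_{e'},R_e$.
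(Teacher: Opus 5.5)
Your proposal is correct and follows the paper's proof essentially step for step: harmonicity from linearity in each block, AM--GM for $\|u\|_\infty$, the slice estimate $\|T\|_2^2\le k\sum_a\|T(e_a,\cdot,\cdot)\|_{\mathrm{op}}^2\le k^2\|T\|^2$, orthogonality of every slice in the equality case, then a unital isotope and Hurwitz; the only cosmetic difference is that you get $\|u\|_2^2=\|T\|_2^2/N_n$ from $\Fs uu=\sum T_{abc}^2$ via \Cref{lem:fischer} rather than from $\|u\|_B^2=\frac16\|T\|_2^2$ and \eqref{bombieri}, which is the same computation. One notational slip: in your last sentence the maps are attached to the wrong arguments, and with $\mu(x,y)=A(x)^Ty$, $L_e=\mu(e,\cdot)$ and $R_{e'}=\mu(\cdot,e')$ as in your preceding sentence, the unital product should be $x\cdot y=\mu(R_{e'}^{-1}x,\,L_e^{-1}y)$, with unit $\mu(e,e')$.
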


\begin{proof}
$u$ is harmonic by the remark after \Cref{def:blocktri}. Since $|T(x_1,x_2,x_3)|\le\|T\|\,|x_1||x_2||x_3|$ with equality for suitable unit vectors, and on $S^{n-1}$, by the inequality between the arithmetic and geometric means,
$$
\bigl(|x_1|^2|x_2|^2|x_3|^2\bigr)^{1/3}\le\frac{|x_1|^2+|x_2|^2+|x_3|^2}{3}=\frac13 ,
$$
i.e.\ $|x_1||x_2||x_3|\le3^{-3/2}$, with equality iff $|x_i|^2=\frac13$, we get $\|u\|_\infty=\|T\|/(3\sqrt3)$. The symmetric tensor of $u$ has six non-zero blocks, each equal to $\frac16T$ up to a permutation of the arguments, so $\|u\|_B^2=\frac16\|T\|_2^2$. Now \eqref{bombieri} gives the identity in \eqref{peetre}. For an orthonormal basis $e_1,\dots,e_k$ of $\R{k}$ let $T_i=T(e_i,\cdot,\cdot)$, regarded as a $k\times k$ matrix. Then
$$
\|T\|_2^2=\sum_{i=1}^k\|T_i\|_2^2\le k\sum_{i=1}^k\|T_i\|_{\mathrm{op}}^2\le k^2\|T\|^2,
$$
which is the inequality. Equality forces $T_i=\|T\|\,O_i$ with $O_i$ orthogonal, for every orthonormal basis, hence $T(x,\cdot,\cdot)$ is $\|T\|\,|x|$ times an orthogonal matrix for every $x$. Thus $(x,y)\mapsto\mu(x,y)$, $\scal{\mu(x,y)}z=T(x,y,z)/\|T\|$, is a bilinear map with $|\mu(x,y)|=|x||y|$. Fix unit vectors $a,b$; the maps $y\mapsto\mu(a,y)$ and $x\mapsto\mu(x,b)$ are orthogonal, and $x\circ y=\mu\bigl(\mu(\cdot,b)^{-1}x,\mu(a,\cdot)^{-1}y\bigr)$ is a composition with unit $\mu(a,b)$, $|x\circ y|=|x||y|$. By Hurwitz's theorem on composition algebras with unit \cite{Hurwitz1898}, see also \cite[Chap.~1]{Shapiro-compositions}, $k\in\{1,2,4,8\}$ and $(\R k,\circ)$ is isometrically isomorphic to $\mathbb K_k$; hence $\mu$ is isotopic to the multiplication of the normed division algebra $\mathbb K_k$, i.e.\ $\mu(x,y)=C\bigl((Ax)(By)\bigr)$ with orthogonal $A,B,C$, so that $u$ is congruent to $\scal{x_1x_2}{x_3}=\re\bigl((x_1x_2)\bar x_3\bigr)$; replacing $x_3$ by $\bar x_3$ gives the mutant cubic of \Cref{sec:polar}. Conversely, for the mutant cubic $\|T\|=1$ and $\|T\|_2^2=k^2$.
\end{proof}

\begin{corollary}\label{cor:mutantmin}
Let $k\in\{1,2,4,8\}$ and $n=3k$. On the subspace of block-trilinear cubics in $\hcub$, the minimum of $R$ equals $N_n/(27k^2)$ and is attained exactly at the mutant Hsiang cubic, up to orthogonal changes of the blocks and scaling. For $k=1$ this is the value $\frac{35}9$ of $x_1x_2x_3$ ($\Pol(1,2)$), and in general it agrees with \Cref{the:clifford} for $(m_0,m_1)=(k,2k)$.
\end{corollary}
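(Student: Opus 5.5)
The corollary follows from \Cref{pro:peetre} once we check three things: that the block-trilinear cubics form a subspace of $\hcub$, that the bound \eqref{peetre} is attained there, and that the minimizers are exactly the mutants.

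First, fix the orthogonal splitting $\R{n}=(\R{k})^3$. The block-trilinear cubics adapted to this splitting are the forms $T(x_1,x_2,x_3)$ with $T$ an arbitrary trilinear form. They form a linear subspace. By the remark after \Cref{def:blocktri} every such form is harmonic, so the subspace lies in $\hcub$. If one wants all splittings at once, note that $R$ is $O(n)$-invariant, so nothing depends on which splitting is fixed.

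For any non-zero $u$ in this subspace, \Cref{pro:peetre} gives $R(u)\ge N_n/(27k^2)$. Because $k\in\{1,2,4,8\}$, the equality clause of \Cref{pro:peetre} applies: the bound is attained by $u=\re\bigl((x_1x_2)x_3\bigr)$ on $\mathbb K_k^3$. Equality holds only for this form, up to orthogonal changes of the three blocks and scaling. By \Cref{sec:polar} this form is the mutant cubic of $\Pol(k,2k)$. So the minimum exists, equals $N_n/(27k^2)$, and is attained exactly on the stated orbit. The one point to watch is that the proposition's notion of congruence (orthogonal maps acting separately on $x_1,x_2,x_3$, together with the conjugation $x_3\mapsto\bar x_3$) matches "orthogonal changes of the blocks". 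It does, since conjugation on $\mathbb K_k$ is an orthogonal map of the third block.

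For $k=1$ we have $n=3$ and $N_3=3\cdot5\cdot7=105$, so the minimum is $105/27=35/9$. This is the value for $x_1x_2x_3$ in \Cref{tab:ratio}.

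For general $k$, compare with the polar formula of Theorem~D(i), $R(u)=2N_n/(27m_0m_1)$. With $(m_0,m_1)=(k,2k)$ this gives $2N_n/(54k^2)=N_n/(27k^2)$. This agrees with \Cref{the:clifford}, which is presumably the polar statement behind Theorem~D(i). The computation is straightforward; the only real content of the corollary is the equality case of \Cref{pro:peetre}, i.e.\ Hurwitz's theorem, and that is already proved there.
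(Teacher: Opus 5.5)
Your proposal is correct and follows the paper's route: the paper gives no separate proof, and the corollary is read off from \Cref{pro:peetre}, namely the inequality together with its Hurwitz-based equality case, where conjugation of the third block counts as an orthogonal change of that block. The two numerical checks are exactly those needed: $N_3/27=105/27=\frac{35}9$ for $k=1$, and $2N_n/(27\cdot k\cdot 2k)=N_n/(27k^2)$ from \Cref{the:clifford} with $(m_0,m_1)=(k,2k)$.
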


\begin{remark}\label{rem:tripling}
For the triplings (b), \eqref{peetre} gives $R(u)=\frac{N_n}{27k(k-1)}$: for $k=3$ this is $\frac{9\cdot11\cdot13}{27\cdot6}=\frac{143}{18}$, in agreement with \Cref{tab:ratio}, and for $k=7$ it is $\frac{21\cdot23\cdot25}{27\cdot42}=\frac{575}{54}$ (\Cref{tab:ratio}; this algebra is an Einstein Hsiang algebra with $\Kill=12\Id$), compared with the lower bound $\frac{575}{63}$ in \eqref{peetre}, which is not attained since $7\notin\{1,2,4,8\}$. For $k=3$ the tripling $\det X$ is not extremal for \eqref{ckp} either, since $\|T\|_2^2=6<7\le d(3)$; the extremal trilinear forms on $(\R3)^3$ are not known.
\end{remark}

\section{The Kolmogorov criterion}\label{sec:kolm}

\subsection{First variation}
For a continuous function $p$ on $S$ let $E(p)=\{x\in S:\ |p(x)|=\|p\|_\infty\}$. The one-sided derivative of the uniform norm is given by Danskin's theorem on max-functions \cite[Chap.~III, Theorem~I]{Danskin}, applied to $s\,(p+\varepsilon h)(x)$ on the compact set $S\times\{\pm1\}$ (see also \cite{Singer-bestapproximation}, \cite[Thm.~23.4]{Rockafellar}): for every continuous $h$,
\begin{equation}\label{danskingen}
\frac{d}{d\varepsilon}\Big|_{\varepsilon=0^+}\|p+\varepsilon h\|_\infty=\max_{x\in E(p)}\sgn p(x)\,h(x).
\end{equation}
For $p=u$ we have $E(u)=\pm\hIdm$ by \eqref{altset}, and since $h\in\hcub$ is odd, the contributions of $\hat c$ and $-\hat c$ coincide:
\begin{equation}\label{danskin}
\frac{d}{d\varepsilon}\Big|_{0^+}\|u+\varepsilon h\|_\infty=\max_{c\in\Idm(\alg)}h(\hat c).
\end{equation}
If $h\perp u$ in $L^2(\sigma)$, then $\|u+\varepsilon h\|_2^2=\|u\|_2^2+\varepsilon^2\|h\|_2^2$, so that
\begin{equation}\label{dR}
\frac{d}{d\varepsilon}\Big|_{0^+}R(u+\varepsilon h)=\frac{2M}{\|u\|_2^2}\,\max_{c\in\Idm(\alg)}h(\hat c).
\end{equation}

\begin{definition}
A non-zero $p\in\hcub$ is \emph{stationary} for \eqref{kappa0} if no direction decreases $R$ to first order, i.e.\ if the one-sided derivative of $R(p+\varepsilon h)$ at $\varepsilon=0^+$ is non-negative for every $h\in\hcub$.
\end{definition}

Since $R$ is invariant under scaling, the component of $h$ along $p$ does not affect the first-order behaviour: $R\bigl(p+\varepsilon(\alpha p+h')\bigr)=R\bigl(p+\tfrac{\varepsilon}{1+\varepsilon\alpha}h'\bigr)$. Hence it suffices to consider $h\perp p$. Since the factor $2M/\|u\|_2^2$ in \eqref{dR} is positive, the derivative has the sign of $\max_ch(\hat c)$, and \eqref{dR} shows that \emph{a Hsiang cubic $u$ is stationary if and only if}
\begin{equation}\label{stat}
\max_{c\in\Idm(\alg)}h(\hat c)\ \ge\ 0\qquad\text{for every }h\in\hcub,\ h\perp u.
\end{equation}

Every local minimizer of $R$ is stationary, but the converse fails badly.

\begin{proposition}\label{pro:zonalstat}
Every zonal harmonic $Z_a$, $a\in S$, is stationary.
\end{proposition}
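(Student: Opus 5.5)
We must show that for $p=Z_a$ the one-sided derivative of $R(Z_a+\varepsilon h)$ at $0^+$ is non-negative for every $h\in\hcub$. As noted above, scaling invariance lets us restrict to $h\perp Z_a$ in $L^2(\sigma)$.

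\emph{Extremal set.} The sup-norm of $Z_a$ is attained at $\pm a$. By \eqref{trivial} and \Cref{lem:zonal}, $K_a=\frac{N_n}6Z_a$ attains equality in $|p(x)|\le\|p\|_2\|K_x\|_2$ only at $x=\pm a$. Concretely, $|K_a(x)|=|\scal{K_a}{K_x}|\le\|K_a\|_2\|K_x\|_2=K_a(a)$, since $K_x(x)$ does not depend on $x$. Equality requires $K_x=\pm K_a$, which forces $x=\pm a$. So $E(Z_a)=\{a,-a\}$, with $\sgn Z_a(\pm a)=\pm1$.

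\emph{First variation.} Apply Danskin's formula \eqref{danskingen}. Since $h$ is odd, the contributions of $a$ and $-a$ coincide, and
$$\frac{d}{d\varepsilon}\Big|_{0^+}\|Z_a+\varepsilon h\|_\infty=h(a).$$
By the reproducing property \eqref{zonalquad},
$$h(a)=\frac{N_n}6\int_ShZ_a\,d\sigma=0 \quad\text{for } h\perp Z_a.$$
Because $h\perp Z_a$, the $L^2$-norm satisfies $\|Z_a+\varepsilon h\|_2^2=\|Z_a\|_2^2+\varepsilon^2\|h\|_2^2$, so it has zero first derivative. Hence the one-sided derivative of $R$ is
$$\frac{2\|Z_a\|_\infty}{\|Z_a\|_2^2}\,h(a)=0\ge0,$$
and $Z_a$ is stationary.

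The only delicate point is identifying $E(Z_a)$ exactly, since Danskin's maximum runs over all of $E(Z_a)$. Even this is inessential. At the points $\pm a$, which certainly lie in $E(Z_a)$, the term is $h(a)=0$, so the maximum over $E(Z_a)$ is $\ge0$ regardless of whether other extremal points exist. I would therefore present this weaker observation, which suffices, and mention the exact characterisation of $E(Z_a)$ only as a remark.
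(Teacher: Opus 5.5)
Your proof is correct and follows the paper's argument: Danskin's formula \eqref{danskingen}, the reproducing identity \eqref{zonalquad} giving $h(a)=0$ for $h\perp Z_a$, and orthogonality to remove the first-order change of $\|\cdot\|_2^2$. The only difference is how the extremal set is handled. The paper computes $E(Z_a)=\{\pm a\}$ for $n\ge3$ from the profile $G(t)=t^3-\frac3{n+2}t$, and treats $n=2$ separately because $R$ is constant there. This gives the slightly stronger conclusion that the one-sided derivative is exactly $0$ in every direction $h\perp Z_a$. Your weaker observation is that $\pm a\in E(Z_a)$ already makes the Danskin maximum $\ge h(a)=0$. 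The right derivative of $R$ is then $2\|Z_a\|_\infty\|Z_a\|_2^{-2}$ times a non-negative number, which is exactly what the definition of stationarity requires. It also works for every $n\ge2$ without a case split.

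Be careful with the remark, though: as stated it is false for $n=2$. On $S^1$ one has $Z_a=\frac14\cos3(\varphi-\alpha)$, so $K_x=K_a$ whenever $x$ is $a$ rotated by $\pm\frac{2\pi}3$, and $E(Z_a)$ consists of six points. For $n\ge3$ your implication $K_x=\pm K_a\Rightarrow x=\pm a$ is true, but it needs an argument. Evaluating $Z_x=Z_a$ at $x$ gives $(1-t)\bigl(1+t+t^2-\frac3{n+2}\bigr)=0$ with $t=\scal xa$. Since $1+t+t^2\ge\frac34>\frac3{n+2}$, this forces $t=1$. The case $K_x=-K_a$ reduces to this with $-a$ in place of $a$.
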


\begin{proof}
For $n=2$ every harmonic cubic is a rotated multiple of $\re z^3$, so $R$ is constant on $\hcub\setminus\{0\}$. Let $n\ge3$. We have $Z_a(x)=G(\scal xa)$ with $G(t)=t^3-\frac{3}{n+2}t$. The interior critical values of $G$ on $[-1,1]$ have modulus $2(n+2)^{-3/2}<\frac{n-1}{n+2}=G(1)$, so $|G|$ attains its maximum only at $t=\pm1$, i.e.\ $E(Z_a)=\{\pm a\}$. Since $h\in\hcub$ is odd, \eqref{danskingen} gives
$$
\frac{d}{d\varepsilon}\Big|_{\varepsilon=0^+}\|Z_a+\varepsilon h\|_\infty=h(a).
$$
If $h\perp Z_a$, then $h(a)=\int_S hK_a\,d\sigma=0$ by \Cref{lem:zonal}, and $\|Z_a+\varepsilon h\|_2^2=\|Z_a\|_2^2+\varepsilon^2\|h\|_2^2$. Hence the one-sided derivative of $R(Z_a+\varepsilon h)$ vanishes for every $h\perp Z_a$, and, by scale invariance, $Z_a$ is stationary.
\end{proof}

Since the zonal harmonics are the global maximizers of $R$ by \eqref{trivial}, stationarity cannot distinguish minimizers from maximizers; this is why the finer analysis of \Cref{sec:local} is needed.

\begin{theorem}[Kolmogorov criterion]\label{the:kolm}
For a Hsiang cubic $u$ the following are equivalent:
\begin{enumerate}
\item[(i)] $u$ is stationary;
\item[(ii)] there is a probability measure $\mu_0$ on $\Idm(\alg)$ with $\int h(\hat c)\,d\mu_0(c)=0$ for all $h\in\hcub$, $h\perp u$;
\item[(iii)] there is a finite positive measure $\mu$ on $\Idm(\alg)$ such that
\begin{equation}\label{kolmrep}
u(x)=\int_{\Idm(\alg)}Z_{\hat c}(x)\,d\mu(c)=\int_{\Idm(\alg)}\Bigl(\scal{x}{\hat c}^3-\frac{3}{n+2}|x|^2\scal x{\hat c}\Bigr)d\mu(c);
\end{equation}
\item[(iv)] there is a finite positive measure $\mu$ on $\Idm(\alg)$ such that
\begin{equation}\label{cubature}
\int_Sh\,u\,d\sigma=\frac6{N_n}\int_{\Idm(\alg)}h(\hat c)\,d\mu(c)\qquad\text{for all }h\in\hcub.
\end{equation}
\end{enumerate}
The measures in (iii) and (iv) are the same, and may be chosen with at most $\dim\hcub$ atoms. Their total mass is
\begin{equation}\label{mass}
\mu(\Idm(\alg))=\frac{|c|}6\trace \Kill=\frac{\sqrt3}{12}\trace \Kill.
\end{equation}
\end{theorem}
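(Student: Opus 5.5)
The plan is to prove the cycle (i)$\Leftrightarrow$(ii) by convex separation, then (ii)$\Rightarrow$(iii)$\Leftrightarrow$(iv)$\Rightarrow$(i) using the reproducing kernel of \Cref{lem:zonal}, and finally to compute the mass by testing against a suitable $h$.

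\emph{(i)$\Leftrightarrow$(ii).} Let $V=\{h\in\hcub:\ h\perp u\}$ and consider the linear evaluation map $\Phi:\Idm(\alg)\to V^*$, $\Phi(c)=(h\mapsto h(\hat c))$. Its image is compact, since $\Idm(\alg)$ is compact by \Cref{the:peirce}, so the convex hull $C=\conv\Phi(\Idm(\alg))$ is compact. Condition (ii) says exactly that $0\in C$. Stationarity, in the form \eqref{stat}, says that no $h\in V$ satisfies $h(\hat c)<0$ for all $c$, i.e.\ that no linear functional on $V^*$ strictly separates $0$ from $C$. By the separating hyperplane theorem in the finite-dimensional space $V^*$ these two conditions are equivalent. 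By Carath\'eodory's theorem, $\mu_0$ can then be taken with at most $\dim V+1=\dim\hcub$ atoms.

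\emph{(ii)$\Rightarrow$(iv)$\Leftrightarrow$(iii).} Given $\mu_0$, the functional $h\mapsto\int h(\hat c)\,d\mu_0$ vanishes on $V$, so it is a multiple of $h\mapsto\int hu\,d\sigma$. Choose a test function $h$ with a nonzero value: take $h=u$, which gives $\int u(\hat c)\,d\mu_0=M>0$ because $u(\hat c)=M$ on the whole of $\Idm(\alg)$ by \Cref{the:equi}. Scaling $\mu_0$ by the appropriate positive constant then yields \eqref{cubature}. Next, by \eqref{zonalquad}, $h(\hat c)=\frac{N_n}{6}\int hZ_{\hat c}\,d\sigma$. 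Hence \eqref{cubature} reads
\[
\int_S h\Bigl(u-\int Z_{\hat c}\,d\mu\Bigr)d\sigma=0\qquad\text{for all }h\in\hcub.
\]
Since both $u$ and the averaged zonal harmonic lie in $\hcub$, this is equivalent to \eqref{kolmrep}. It also shows that the measures in (iii) and (iv) are the same.

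\emph{(iv)$\Rightarrow$(i) and the mass.} Conversely, \eqref{cubature} applied to $h\in V$ gives $\int h(\hat c)\,d\mu=0$. Since $\mu$ is positive, $\max_c h(\hat c)\ge0$, which is \eqref{stat}. For the mass, put $h=u$ in \eqref{cubature}: using $u(\hat c)=M$ on $\Idm(\alg)$,
\[
\|u\|_2^2=\frac{6M}{N_n}\,\mu(\Idm(\alg)).
\]
Substituting $\|u\|_2^2=\trace\Kill/(6N_n)$ from \eqref{ratio} and $M=1/(6|c|)$ gives $\mu(\Idm(\alg))=\frac{|c|}6\trace\Kill$. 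The normalization $|c|=\sqrt3/2$ then gives \eqref{mass}.

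\emph{Main obstacle.} The only delicate point is the separation step. One must pass from the one-sided derivative criterion \eqref{stat} to a measure on the possibly positive-dimensional variety $\Idm(\alg)$. This relies on the compactness of the image of $\Idm(\alg)$ and on the restriction to $h\perp u$, which is legitimate by the scale-invariance remark preceding \eqref{stat}.
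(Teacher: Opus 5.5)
Your proof is correct and follows the paper's argument essentially step by step: convex separation plus Carath\'eodory for (i)$\Leftrightarrow$(ii), the reproducing property \eqref{zonalquad} for (iii)$\Leftrightarrow$(iv), and testing with $h=u$ both for the positivity of the rescaling and for the mass. The only differences are cosmetic. You go from (ii) to (iv) using the fact that the annihilator of $u^\perp$ is one-dimensional, whereas the paper goes from (ii) to (iii) by showing that $\int Z_{\hat c}\,d\mu_0$ is orthogonal to $u^\perp$. Also, in (iv)$\Rightarrow$(i) you should say explicitly that $\mu\neq0$, which your own computation with $h=u$ supplies.
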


\begin{definition}\label{def:harmrep}
A finite positive measure $\mu$ on $\Idm(\alg)$ satisfying \eqref{kolmrep}, equivalently \eqref{cubature}, is called a \emph{Kolmogorov measure} of $u$. In the language of the truncated moment problem, a Kolmogorov measure represents the functional $h\mapsto\int_Sh\,u\,d\sigma$ on $\hcub$, i.e.\ it represents $u$ modulo the trace part $|x|^2\mathscr H_1$ in \eqref{harmdec}.
\end{definition}

\begin{proof}
(i)$\Leftrightarrow$(ii). Let $V=u^\perp\subset\hcub$ and let $\mathcal K\subset V^*$ be the image of the compact set $\Idm(\alg)$ under $c\mapsto(h\mapsto h(\hat c))$. Since $\hcub$ is finite dimensional, $\mathcal K$ is a compact subset of the finite-dimensional space $V^*$, and its convex hull $\conv\mathcal K$ is compact by Carath\'eodory's theorem. By the strict separation theorem for compact convex sets in finite dimensions, either there is $h\in V$ with $\ell(h)<0$ for all $\ell\in\mathcal K$, i.e.\ \eqref{stat} fails, or $0\in\conv\mathcal K$, which is (ii). By Carath\'eodory's theorem on convex hulls, $0$ is then a convex combination of at most $\dim V+1=\dim\hcub$ points of $\mathcal K$; this is the elementary part of Tchakaloff's theorem on positive cubature formulas \cite{Putinar-Tchakaloff,Bayer-Teichmann}.

(ii)$\Rightarrow$(iii). Let $\mu_0$ be as in (ii) and put
$$
p=\int_{\Idm(\alg)}Z_{\hat c}\,d\mu_0(c).
$$
Since $c\mapsto Z_{\hat c}$ is a continuous map into the finite-dimensional space $\hcub$, $p$ is a well-defined element of $\hcub$. For every $h\in\hcub$, by Fubini's theorem and \eqref{zonalquad},
\begin{equation}\label{kolmpair}
\int_Sh\,p\,d\sigma=\int_{\Idm(\alg)}\Bigl(\int_ShZ_{\hat c}\,d\sigma\Bigr)d\mu_0(c)=\frac6{N_n}\int_{\Idm(\alg)}h(\hat c)\,d\mu_0(c).
\end{equation}
By (ii) the right hand side vanishes for $h\in V$, so $p\perp V$ and therefore $p=\gamma u$ for some $\gamma\in\R{}$. Taking $h=u$ in \eqref{kolmpair} and using $u(\hat c)=M$ on $\Idm(\alg)$ (\Cref{the:equi}),
$$
\gamma\|u\|_2^2=\frac6{N_n}\int u(\hat c)\,d\mu_0=\frac{6M}{N_n}\,\mu_0(\Idm(\alg))>0 .
$$
Hence $\gamma>0$, and $\mu=\mu_0/\gamma$ satisfies \eqref{kolmrep}. The second expression in \eqref{kolmrep} is just the formula \eqref{zonal} for $Z_{\hat c}$.

(iii)$\Leftrightarrow$(iv). If $\mu$ satisfies \eqref{kolmrep}, then \eqref{kolmpair} with $\mu$ in place of $\mu_0$ and $p=u$ is exactly \eqref{cubature}. Conversely, if $\mu$ satisfies \eqref{cubature}, then $p=\int Z_{\hat c}\,d\mu\in\hcub$ satisfies $\int_Sh\,p\,d\sigma=\int_Sh\,u\,d\sigma$ for all $h\in\hcub$ by \eqref{kolmpair}; since $p-u\in\hcub$, this forces $p=u$, which is \eqref{kolmrep}. In particular the measures in (iii) and (iv) are the same, and if $\mu_0$ in (ii) has at most $\dim\hcub$ atoms, so does $\mu$.

(iv)$\Rightarrow$(i). Taking $h=u$ in \eqref{cubature} gives $\|u\|_2^2=\frac{6M}{N_n}\mu(\Idm(\alg))$, so $\mu\ne0$. If $h\in\hcub$ and $h\perp u$, then \eqref{cubature} gives $\int h(\hat c)\,d\mu(c)=0$; since $\mu$ is a non-zero positive measure, $h(\hat c)\ge0$ for some $c\in\Idm(\alg)$, i.e.\ \eqref{stat} holds.

Finally, the identity $\|u\|_2^2=\frac{6M}{N_n}\mu(\Idm(\alg))$ together with $\|u\|_2^2=\frac{\trace \Kill}{6N_n}$ from \eqref{ratio} and $6M=1/|c|$ gives \eqref{mass}.
\end{proof}

\begin{remark}[why stationarity is special]
For a generic $p\in\hcub$, $E(p)$ is a single pair $\pm x_0$, and the analogue of (iii) forces $p$ to be a multiple of $Z_{x_0}$. Thus a stationary harmonic cubic which is not zonal must have a large extremal set, large enough to reproduce $p$ as a positive average of zonal harmonics. In the classical case it is the $k+1$ alternation points of $T_k$ that play this role. For Hsiang cubics the extremal set is the whole idempotent variety, which explains the analogy.
\end{remark}

\begin{remark}[Quadrature identity]
We say that $u$ \emph{admits an idempotent quadrature} if \eqref{cubature} holds; thus a Hsiang cubic is stationary exactly when it admits an idempotent quadrature, in analogy with quadrature domains, where $\int_\Omega h\,dA$ is reproduced by finitely many point evaluations for all integrable harmonic $h$. Identity \eqref{cubature} is a \emph{quadrature identity with the weight $u$} (we use the word in the sense of quadrature domains; in numerical analysis multivariate formulas of this kind are called cubature formulas): the integral of every harmonic cubic against $u\,d\sigma$ is computed exactly from its values at the idempotents. It is the analogue of the positive quadrature formulas carried by the alternation points of Chebyshev polynomials. We return to this point of view in \Cref{sec:open}.
\end{remark}

\section{Positive Waring decompositions and moments}\label{sec:waring}

Recall the distinction between the two kinds of measures: a Kolmogorov measure reproduces $u$ as an average of zonal harmonics, i.e.\ it reproduces only the harmonic part of the cubic $\int\scal x{\hat c}^3d\mu$, whereas a representing measure reproduces this cubic itself; the second notion is stronger.
Let $\mu$ be as in \eqref{kolmrep} and put $m=\int\hat c\,d\mu(c)$. Then \eqref{kolmrep} reads
\begin{equation}\label{waringtrace}
u(x)=\int_{\Idm(\alg)}\scal x{\hat c}^3\,d\mu(c)-\frac{3}{n+2}|x|^2\scal xm.
\end{equation}
This is a \emph{Waring decomposition} of $u$, a sum of cubes of linear forms corrected by a trace part in the sense of \eqref{harmdec}, with the special feature that all nodes lie in the \emph{maximum set} $\hIdm$ and all weights are positive. (Over $\R{}$ signs can always be absorbed into the nodes, since $-\scal xa^3=\scal x{-a}^3$; the point is that only maximum points, never minimum points, are used.) For Waring decompositions and their relation to cubature and designs see \cite{Reznick-memoir}.

\begin{definition}
Following the terminology of the truncated moment problem \cite{Curto-Fialkow-flat,Schmudgen-book,Putinar-positive}, a positive measure $\mu$ on $\Idm(\alg)$ is called a \emph{representing measure} of $u$ (or of the tensor $\frac16\Tens$) if
\begin{equation}\label{repmeas}
u(x)=\int_{\Idm(\alg)}\scal x{\hat c}^3\,d\mu(c),\qquad\text{equivalently}\qquad L(x)=6\int_{\Idm(\alg)}\scal x{\hat c}\ \hat c\otimes\hat c\ d\mu(c).
\end{equation}
\end{definition}

If $\mu\ge0$ satisfies the first identity in \eqref{repmeas}, then $0=\Delta u=6\scal xm$, so $m=0$ and $\mu$ satisfies \eqref{kolmrep}; thus representing measures are Kolmogorov measures in the sense of \Cref{def:harmrep}. The second identity in \eqref{repmeas} says that \emph{the multiplication operators of $\alg$ are averages of rank one projections onto the idempotent lines}; equivalently, the multiplication tensor is the third moment of a measure on the idempotent variety.

For a representing measure the trace correction in \eqref{waringtrace} vanishes, and the resulting positive Waring decomposition can always be made finite: the cubes $\scal x{\hat c}^3$, $c\in\Idm(\alg)$, form a compact subset of the space of cubic forms, of dimension $\binom{n+2}3$, so by Carath\'eodory's theorem (the elementary part of Tchakaloff's theorem \cite{Putinar-Tchakaloff,Bayer-Teichmann}) the existence of a representing measure is equivalent to the existence of a finite positive Waring decomposition
$$
u(x)=\sum_{k=1}^mw_k\scal x{\hat c_k}^3,\qquad w_k>0,\quad c_k\in\Idm(\alg),\quad m\le\tbinom{n+2}3 .
$$
We do not address the minimal length $m$ (the Waring rank problem).

\begin{theorem}\label{the:moments}
Let $\mu$ be a representing measure of $u$ supported on $\Idm(\alg)$. Then
\begin{align}
\int d\mu&=\tfrac{|c|}6\trace \Kill,&\int\hat c\,d\mu&=0,\label{mom01}\\
\int\hat c\otimes\hat c\,d\mu&=\tfrac{|c|}{6}\,\Kill,&\int\hat c\otimes\hat c\otimes\hat c\,d\mu&=\tfrac16\,\Tens.\label{mom23}
\end{align}
Moreover, every idempotent is in equilibrium with respect to $\mu$:
\begin{equation}\label{equil}
\int_{\Idm(\alg)}\scal{\hat c_0}{\hat c}^2\,\hat c\,d\mu(c)=M\,\hat c_0,\qquad c_0\in\Idm(\alg).
\end{equation}
\end{theorem}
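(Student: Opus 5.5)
The plan is to polarize the defining identity \eqref{repmeas} and read off the moments by contracting tensors. This uses only three inputs: $\Delta u=0$, the formula $D^2u(x)=L(x)$, and the fact that $u(\hat c)=M$ on the support of $\mu$ (\Cref{the:equi}).

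\emph{Third moment.} Since $u(x)=\int\scal x{\hat c}^3d\mu$ and a symmetric trilinear form is determined by its cubic form, we get $\int\hat c^{\otimes3}d\mu=\frac16\Tens$. Indeed, both sides are symmetric and have the same diagonal value $u(x)=\frac16\scal{x^2}x$.

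\emph{First moment.} Take the Laplacian of $u$. We have $\Delta\scal x{\hat c}^3=6\scal x{\hat c}$, so $0=\Delta u=6\scal x{\int\hat c\,d\mu}$, which gives $\int\hat c\,d\mu=0$. Equivalently, contract the third moment in two slots: $\sum_i\Tens(e_i,e_i,\cdot)=\trace L(\cdot)=0$ by exactness.

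\emph{Second moment.} Contracting the third moment with itself does not decouple, so we need the idempotent property. The key step is this: for $c$ in the support,
\[
L(\hat c)\hat c=\hat c^{\,2}=\frac{c^2}{|c|^2}=\frac{c}{|c|^2}=\frac{\hat c}{|c|}=6M\,\hat c .
\]
Now use the second form of \eqref{repmeas}, $L(x)=6\int\scal x{\hat c}\,\hat c\otimes\hat c\,d\mu$, evaluated at $x=e_i$, and square it. Summing over $i$ gives $\Kill=\sum_iL(e_i)^2$. The cleaner route is to compute $\Kill$ against one integral at a time:
\[
\Kill=\sum_iL(e_i)L(e_i)=6\int\sum_i\scal{e_i}{\hat c}\,L(e_i)\,\hat c\otimes\hat c\,d\mu=6\int L(\hat c)\hat c\otimes\hat c\,d\mu .
\]
Here we used linearity $\sum_i\scal{e_i}{\hat c}L(e_i)=L(\hat c)$. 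Since $L(\hat c)\hat c=6M\hat c$, this equals $36M\int\hat c\otimes\hat c\,d\mu$. With $6M=1/|c|$ we get $\int\hat c\otimes\hat c\,d\mu=\frac{|c|}6\Kill$. Taking the trace gives the mass $\int d\mu=\frac{|c|}6\trace\Kill$, in agreement with \eqref{mass}.

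\emph{Equilibrium \eqref{equil}.} This is the derivative of the Waring identity at $x=\hat c_0$. From $Du(x)=3\int\scal x{\hat c}^2\hat c\,d\mu$ and $Du(x)=\frac12x^2$ we get
\[
\int\scal{\hat c_0}{\hat c}^2\hat c\,d\mu=\tfrac16\hat c_0^{\,2}=\tfrac16\cdot6M\,\hat c_0=M\hat c_0 .
\]

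The only step I expect to need care is the second moment. A naive double-integral expansion of $\Kill$ gives a kernel $\scal{\hat c}{\hat c'}^3$ that cannot be evaluated. The point is to apply the representation to only one factor $L(e_i)$ and to let the other factor act on $\hat c$ via the idempotent eigen-relation. One should also check that the eigenvalue $1/|c|$ of $L(\hat c)$ at $\hat c$ is the same for every $c$, which holds because all idempotents have the same length by \eqref{Mnorm}.
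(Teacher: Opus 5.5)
Your proof is correct. For the third moment, the first moment and the equilibrium identity it follows the paper's argument. The genuine difference is in the second moment and the mass.

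\textbf{The paper's route.} The paper substitutes \eqref{repmeas} into both factors of $\trace L(x)L(y)$. This gives the double integral $36\iint\scal x{\hat c}\scal y{\hat c'}\scal{\hat c}{\hat c'}^2\,d\mu(c)\,d\mu(c')$, whose inner integral is collapsed by \eqref{equil}. The mass is simply quoted from \eqref{mass}, i.e.\ from the Kolmogorov criterion combined with the $L^2$ computation of \Cref{the:ratio}.

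\textbf{Your route.} You substitute into only one factor of $\Kill=\sum_iL(e_i)^2$ and use $L(\hat c)\hat c=6M\hat c$ pointwise, so you never need \eqref{equil}. You then obtain the mass as the trace of the second moment.

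\textbf{What each buys.} Your argument makes the theorem self-contained: it uses only \eqref{repmeas}, exactness and $\hat c^{\,2}=\hat c/|c|$, with no Fischer product or reproducing kernel. The paper's argument exhibits the equilibrium identity as the mechanism behind the second moment. It also yields the mass for every Kolmogorov measure, not only for representing ones. The two computations are really the same, since \eqref{equil} is just $L(\hat c_0)\hat c_0=6\int\scal{\hat c_0}{\hat c}^2\hat c\,d\mu$.

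\textbf{A side remark of yours is inaccurate.} The double-integral expansion does not get stuck. Its kernel is $\scal{\hat c}{\hat c'}^2$, not a cube, and it is evaluated by \eqref{equil}, exactly as in the paper. Even the cubic kernel $\scal{\hat c}{\hat c'}^3$ that appears in $\trace\Kill=\|\Tens\|^2$ can be evaluated: it integrates to $u(\hat c)=M$ by \eqref{repmeas}. This gives the mass directly as $\trace\Kill/(36M)$.
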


\begin{proof}
The third moment is \eqref{repmeas}, the first moment was computed above, and the mass is \eqref{mass}. For \eqref{equil}, apply the second identity in \eqref{repmeas} to $x=y=\hat c_0$: the left hand side becomes $\hat c_0^{\,2}=c_0/|c|^2=\hat c_0/|c|$, while the right hand side is $6\int\scal{\hat c_0}{\hat c}^2\hat c\,d\mu$; and $\frac1{6|c|}=M$. For the second moment $\Sigma=\int\hat c\otimes\hat c\,d\mu$ we compute, using \eqref{repmeas} twice and then \eqref{equil},
\begin{align*}
\scal{Ax}{y}=\trace L(x)L(y)&=36\iint\scal x{\hat c}\scal y{\hat c'}\scal{\hat c}{\hat c'}^2\,d\mu(c)\,d\mu(c')\\
&=36\int\scal y{\hat c'}\Bigl\langle x,\int\scal{\hat c'}{\hat c}^2\hat c\,d\mu(c)\Bigr\rangle d\mu(c')=36M\scal{\Sigma x}{y}.
\end{align*}
Hence $\Sigma=\frac{1}{36M}\Kill=\frac{|c|}{6}\Kill$.
\end{proof}

\begin{corollary}\label{cor:design}
Let $\mu$ be a representing measure of $u$ supported on $\Idm(\alg)$. The following are equivalent:
\begin{enumerate}
\item[(i)] $\alg$ is an Einstein Hsiang algebra;
\item[(ii)] $\mu/\mu(\Idm(\alg))$ is a spherical $2$-design;
\item[(iii)] the symmetrized measure $\frac12(\mu+(-1)_*\mu)$ on the alternation set $\pm\hIdm$ is a spherical $3$-design.
\end{enumerate}
In particular, these properties do not depend on the choice of the representing measure.
\end{corollary}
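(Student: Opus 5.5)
The whole argument will run through the moment identities of \Cref{the:moments}, which hold for any representing measure, together with the standard moment characterization of spherical designs. A probability measure $\nu$ on $S$ is a spherical $t$-design exactly when $\int q\,d\nu=\int_Sq\,d\sigma$ for every polynomial $q$ of degree $\le t$. Equivalently, its moments of order $\le t$ agree with those of $\sigma$. These moments are $0$ in odd degree, and in degree two $\int\hat c\otimes\hat c\,d\nu=\frac1n\Id$.

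\emph{(i)$\Leftrightarrow$(ii).} Put $\nu=\mu/\mu(\Idm(\alg))$. By \eqref{mom01}, $\mu(\Idm(\alg))=\frac{|c|}6\trace\Kill$, and the first moment of $\nu$ vanishes. By \eqref{mom23}, the second moment is
$$
\int\hat c\otimes\hat c\,d\nu=\frac{\Kill}{\trace\Kill}.
$$
The measure $\nu$ is a probability measure on the unit sphere, so constants are already integrated correctly, and the first moment is automatically correct. Hence $\nu$ is a $2$-design if and only if $\Kill/\trace\Kill=\frac1n\Id$. This says exactly that $\Kill$ is a multiple of the identity, which is \Cref{def:einstein}.

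\emph{(ii)$\Leftrightarrow$(iii).} Let $\tilde\nu=\frac12(\nu+(-1)_*\nu)$; this is the normalization of the symmetrized measure in (iii). The measure $\tilde\nu$ is even, so all its odd moments vanish. Its even moments agree with those of $\nu$, since even functions take the same value at $\pm\hat c$. In particular $\tilde\nu$ always has the right moments in degrees $1$ and $3$. Its degree-$2$ moment is that of $\nu$, and its zeroth moment is $1$. So $\tilde\nu$ is a $3$-design if and only if $\nu$ has the correct second moment. Given the vanishing first moment of $\nu$ from \eqref{mom01}, that is precisely condition (ii).

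Finally, the criterion obtained in the first step, $\Kill\propto\Id$, depends only on $\alg$ and not on $\mu$. This gives the independence of the choice of representing measure.

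No step is a serious obstacle. The only point needing care is the second step: the symmetrization must be used to kill the degree-$3$ moment. This is necessary because $\nu$ itself has third moment $\frac16\Tens/\mu(\Idm(\alg))\ne0$, so $\nu$ is never a $3$-design.
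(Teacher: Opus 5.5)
Your proof is correct and is essentially the paper's argument: the moment identities \eqref{mom01}--\eqref{mom23} reduce the $2$-design condition to $\Kill/\trace\Kill=\frac1n\Id$, and symmetrization kills the odd moments while keeping the second one, which gives (ii)$\Leftrightarrow$(iii). You only spell out details that the paper's three-line proof leaves implicit, such as normalizing the symmetrized measure and using that the first moment of $\mu$ already vanishes.
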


\begin{proof}
A probability measure on $S$ is a $2$-design iff its first moment vanishes and its second moment is $\frac1n\Id$ \cite{Delsarte-Goethals-Seidel}. By \eqref{mom01}--\eqref{mom23} this happens iff $\Kill$ is a multiple of $\Id$. Symmetrization kills the odd moments and keeps the second one.
\end{proof}

\begin{remark}[geometric meaning]
By the quadratic trace identity of Hsiang algebras, the Einstein property (\Cref{def:einstein}) is equivalent to the vanishing of the mean curvature of $\Idm(\alg)$ in the sphere of radius $|c|$ \cite{Tkachev-book}.
Combining this with \Cref{cor:design} gives, for algebras which admit a representing measure on $\Idm(\alg)$,
\begin{align*}
\alg\ \text{is an Einstein Hsiang algebra}&\iff\Idm(\alg)\ \text{is minimal in the sphere}\\
&\iff\mu\ \text{is a $2$-design}\iff\pm\hIdm\ \text{carries a $3$-design}.
\end{align*}
For example, the alternation set of $x_1x_2x_3$ ($\Pol(1,2)$) is the cube, a $3$-design, and $\Kill=2\Id$; for the Lawson cubic ($\Pol(2,2)$) $\Kill=\mathrm{diag}(2,2,4,4)$, and its idempotent circle is not minimal.
\end{remark}

\begin{remark}[an energy]
Let
$$
E(\mu)=\iint\scal{\hat c}{\hat c'}^3\,d\mu(c)\,d\mu(c').
$$
By \eqref{repmeas}, $E(\mu)=\int u(\hat c)\,d\mu=M\mu(\Idm(\alg))$, and $\Fs uu=6E(\mu)$. Identity \eqref{equil} says that the potential $c_0\mapsto\int\scal{\hat c_0}{\hat c}^3d\mu(c)$ has a radial gradient at every point of $\Idm(\alg)$, i.e.\ every idempotent is a critical point of its own potential. This is a Fekete-type equilibrium condition, see \Cref{sec:open}.
\end{remark}

\section{Existence of representing measures}\label{sec:existence}

\subsection{Polar Hsiang cubics}
Let $u$ be the cubic \eqref{clifford} of a polar algebra $\alg\in\Pol(m_0,m_1)$, defined by a symmetric Clifford system $P_1,\dots,P_{m_0}$ on $\R{m_1}$, and put $\ell=\frac{m_1}2$. For a unit vector $\eta\in\R{m_0}$ let $E_+(\eta)=\ker(P(\eta)-\Id)$, a subspace of dimension $\ell$, and let $\Pi(\eta)=\frac12(\Id+P(\eta))$ be the orthogonal projection onto it. We denote by $\sigma_{m_0}$ the normalized surface measure on $S^{m_0-1}$, by $S_\eta$ the unit sphere of $E_+(\eta)$, and by $\sigma_\eta$ the normalized surface measure on $S_\eta$.

\begin{theorem}\label{the:clifford}
The normalized idempotents of $\alg$ form the sphere bundle
\begin{equation}\label{cliffidm}
\hIdm=\Bigl\{\Bigl(\tfrac1{\sqrt3}\,\eta,\ \sqrt{\tfrac23}\,\zeta\Bigr):\ \eta\in S^{m_0-1},\ \zeta\in E_+(\eta),\ |\zeta|=1\Bigr\}.
\end{equation}
Let $\nu$ be the probability measure on $\Idm(\alg)$ defined by
\begin{equation}\label{cliffnu}
\int f(\hat c)\,d\nu(c)=\int_{S^{m_0-1}}\Bigl(\int_{S_\eta}f\bigl(\tfrac1{\sqrt3}\,\eta,\sqrt{\tfrac23}\,\zeta\bigr)\,d\sigma_\eta(\zeta)\Bigr)d\sigma_{m_0}(\eta),\qquad f\in C(S^{n-1}),
\end{equation}
i.e.\ $\eta$ is chosen uniformly on $S^{m_0-1}$ and then $\zeta$ uniformly on $S_\eta$. Then
\begin{equation}\label{cliffwaring}
u(x)=\frac{\sqrt3\,m_0m_1}{4}\int\scal x{\hat c}^3\,d\nu(c),\qquad x\in\R{n},
\end{equation}
so that $\mu=\frac{\sqrt3}4m_0m_1\,\nu$ is a representing measure of $u$ and $u$ is stationary. Moreover $\Kill=m_1\Id_{\mathbb V_0}\oplus2m_0\Id_{\mathbb V_1}$, $\trace \Kill=3m_0m_1$, and $R(u)=\dfrac{2N_n}{27m_0m_1}$.
\end{theorem}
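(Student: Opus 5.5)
The plan is to make everything explicit in the coordinates $x=(y,z)\in\R{m_0}\times\R{m_1}$, in four steps: find the idempotents, verify \eqref{cliffwaring} by averaging, read $\Kill$ off \Cref{the:moments}, and obtain $R(u)$ from \Cref{the:ratio}.

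\emph{Step 1: the idempotents.} From \eqref{clifford}, $Du(y,z)=\bigl((\tfrac12\scal{P_iz}{z})_i,\ P(y)z\bigr)$. By \eqref{dict} this gives
$$
x^2=2Du(x)=\bigl((\scal{P_iz}z)_i,\ 2P(y)z\bigr).
$$
So $c=(y,z)$ is an idempotent if and only if $2P(y)z=z$ and $y_i=\scal{P_iz}z$ for all $i$.
\begin{itemize}
\item If $z=0$, then $y=0$, so $c=0$, which is excluded.
\item Otherwise $z$ is an eigenvector of $P(y)$ with eigenvalue $\frac12$. Since $P(y)^2=|y|^2\Id$, this forces $y=\frac12\eta$ with $|\eta|=1$ and $z\in E_+(\eta)$.
\end{itemize}
Next I use the key Clifford fact: for $\xi\perp\eta$, the matrix $P(\xi)$ anticommutes with $P(\eta)$, so it maps $E_+(\eta)$ into $E_-(\eta)$. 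Hence $\scal{P(\xi)z}{z}=0$ for $z\in E_+(\eta)$, and therefore $(\scal{P_iz}z)_i=|z|^2\eta$. The second condition then reads $|z|^2=\frac12$. This gives $|c|^2=\frac14+\frac12=\frac34$, in accordance with \eqref{Mnorm}. Normalizing with $\zeta=z/|z|$ yields \eqref{cliffidm}.

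\emph{Step 2: the averaging identity \eqref{cliffwaring}.} Write $\scal x{\hat c}=a+b$ with $a=\frac1{\sqrt3}\scal y\eta$ and $b=\sqrt{2/3}\,\scal z\zeta$, and expand the cube.
\begin{itemize}
\item The terms odd in $\zeta$ vanish after averaging over $S_\eta$, since $\sigma_\eta$ is invariant under $\zeta\mapsto-\zeta$. This leaves $a^3+3ab^2$.
\item The term $a^3$ is odd in $\eta$ and vanishes after averaging over $S^{m_0-1}$.
\item For the remaining term, $\int_{S_\eta}\scal z\zeta^2d\sigma_\eta=|\Pi(\eta)z|^2/\ell=\scal{(\Id+P(\eta))z}{z}/m_1$.
\item Multiplying by $\scal y\eta$ and averaging over $\eta$, the part coming from $\Id$ is odd in $\eta$ and drops out. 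What remains is $\int\scal y\eta\scal{P(\eta)z}z\,d\sigma_{m_0}=\scal{P(y)z}{z}/m_0$, using $\int\eta\otimes\eta\,d\sigma_{m_0}=\frac1{m_0}\Id$.
\end{itemize}
Collecting constants,
$$
\int\scal x{\hat c}^3\,d\nu=3\cdot\tfrac1{\sqrt3}\cdot\tfrac23\cdot\frac{\scal{P(y)z}z}{m_0m_1}=\frac{4}{\sqrt3\,m_0m_1}\,u(x),
$$
which is \eqref{cliffwaring}. Therefore $\mu$ is a representing measure, hence a Kolmogorov measure, and $u$ is stationary by \Cref{the:kolm}. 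I expect this Clifford averaging, in particular the vanishing of the off-diagonal quadratic terms used in Step 1, to be the only genuinely non-routine point.

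\emph{Step 3: the Killing operator and the ratio.} By \Cref{the:moments}, $\Kill=\frac6{|c|}\int\hat c\otimes\hat c\,d\mu=4\sqrt3\cdot\frac{\sqrt3}4m_0m_1\int\hat c\otimes\hat c\,d\nu$. I compute the second moment of $\nu$ block by block.
\begin{itemize}
\item On $\mathbb V_0$ it is $\frac13\cdot\frac1{m_0}\Id$.
\item The mixed block vanishes, again by oddness in $\zeta$.
\item On $\mathbb V_1$ it is $\frac23\int\frac{\Pi(\eta)}{\ell}\,d\sigma_{m_0}=\frac23\cdot\frac1{m_1}\Id$, since the average of $P(\eta)$ over the sphere is zero.
\end{itemize}
Multiplying by $3m_0m_1$ gives $\Kill=m_1\Id_{\mathbb V_0}\oplus2m_0\Id_{\mathbb V_1}$, hence $\trace\Kill=3m_0m_1$. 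Finally \eqref{ratio} yields $R(u)=\frac{2N_n}{27m_0m_1}$.

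As a consistency check, the mass of $\mu$ is $\frac{\sqrt3}4m_0m_1$. This agrees with \eqref{mass}, since $\frac{\sqrt3}{12}\trace\Kill=\frac{\sqrt3}{12}\cdot3m_0m_1=\frac{\sqrt3}4m_0m_1$.
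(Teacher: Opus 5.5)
Your proposal is correct. Steps 2 and 3 match the paper's own computation: the fibre integral over $S_\eta$ kills the odd powers of $\scal z\zeta$, the base integral uses $\int\eta\otimes\eta\,d\sigma_{m_0}=\frac1{m_0}\Id$, and the second moment of $\nu$ is fed into \Cref{the:moments}.

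Step 1 takes a different route. The paper does not solve $c^2=c$. Instead it bounds $u(y,z)\le\frac12|y||z|^2=\frac12t(1-t^2)\le\frac1{3\sqrt3}$ on the sphere, with equality exactly when $|y|=\frac1{\sqrt3}$ and $P(y)z=|y|z$. It then identifies this maximum set with $\hIdm$ via \Cref{lem:critical} and \Cref{the:equi}(i). That argument is shorter and shows at once that the nodes of $\nu$ are maximum points. However, the inclusion ``every normalized idempotent is a maximum point'' rests on the general Hsiang-theoretic fact that all idempotents have the same length.

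Your direct computation avoids this. You show that $P(y)z=\frac12z$ forces $|y|=\frac12$, and that $P(\xi)$ anticommutes with $P(\eta)$ for $\xi\perp\eta$, giving $(\scal{P_iz}z)_i=|z|^2\eta$. This uses only \eqref{cliffsys}, finds all idempotents rather than only the normalized ones, and verifies $|c|^2=\frac34$ independently. The value $u(\hat c)=\frac1{6|c|}=M$ needed in \Cref{the:kolm} then follows from \Cref{lem:critical}.

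Each step of your chain is reversible: $(\frac12\eta,z)$ with $z\in E_+(\eta)$ and $|z|^2=\frac12$ does satisfy both idempotent equations. So you obtain equality of sets, not just an inclusion, and it is worth saying so explicitly.
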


\begin{proof}
\emph{The idempotents.} Let $x=(y,z)\in S^{n-1}$ and $t=|y|$, so that $|z|^2=1-t^2$. Since $P(y)^2=|y|^2\Id$, the eigenvalues of $P(y)$ are $\pm|y|$, hence $\scal{P(y)z}z\le|y||z|^2$ and
$$
u(x)\le\tfrac12|y||z|^2=\tfrac12t(1-t^2)\le\tfrac1{3\sqrt3}=M,
$$
the last inequality being elementary (the maximum of $t(1-t^2)$ on $[0,1]$ is attained at $t=\frac1{\sqrt3}$). Equality holds throughout if and only if $t=\frac1{\sqrt3}$ and $P(y)z=|y|z$, i.e.\ $y=\frac1{\sqrt3}\eta$ with $|\eta|=1$ and $z=\sqrt{\frac23}\zeta$ with $\zeta\in S_\eta$. On the other hand, by \Cref{lem:critical} and \Cref{the:equi}(i), the maximum points of $u|_S$ are exactly the points $\hat c$, $c\in\Idm(\alg)$. This proves \eqref{cliffidm}.

\emph{The fibre integral.} Let $\hat c=(\frac1{\sqrt3}\eta,\sqrt{\frac23}\zeta)$ and $x=(y,z)\in\R{n}$. Then
$$
\scal x{\hat c}=a+b\scal z\zeta,\qquad a=\tfrac1{\sqrt3}\scal y\eta,\quad b=\sqrt{\tfrac23}.
$$
Fix $\eta$. The measure $\sigma_\eta$ is invariant under $\zeta\mapsto-\zeta$, so the odd powers of $\scal z\zeta$ have zero integral, while, since $\sigma_\eta$ is the normalized measure on the unit sphere of the $\ell$-dimensional space $E_+(\eta)$,
$$
\int_{S_\eta}\scal z\zeta^2\,d\sigma_\eta(\zeta)=\frac1\ell\,|\Pi(\eta)z|^2=\frac1{2\ell}\bigl(|z|^2+\scal{P(\eta)z}z\bigr).
$$
Hence, using $3ab^2=2a$,
\begin{equation}\label{cliff1}
\begin{aligned}
\int_{S_\eta}\scal x{\hat c}^3\,d\sigma_\eta(\zeta)&=a^3+3ab^2\int_{S_\eta}\scal z\zeta^2\,d\sigma_\eta(\zeta)\\
&=\frac{\scal y\eta^3}{3\sqrt3}+\frac{\scal y\eta}{\sqrt3\,\ell}\bigl(|z|^2+\scal{P(\eta)z}z\bigr).
\end{aligned}
\end{equation}

\emph{The base integral.} Now integrate \eqref{cliff1} against $\sigma_{m_0}$. The terms $\scal y\eta^3$ and $\scal y\eta|z|^2$ are odd in $\eta$ and have zero integral. For the remaining term, $\scal y\eta\scal{P(\eta)z}z=\sum_{i,j}y_i\eta_i\eta_j\scal{P_jz}z$, and $\int\eta_i\eta_j\,d\sigma_{m_0}=\frac{\delta_{ij}}{m_0}$ gives
$$
\int_{S^{m_0-1}}\scal y\eta\scal{P(\eta)z}z\,d\sigma_{m_0}(\eta)=\frac1{m_0}\scal{P(y)z}z=\frac2{m_0}\,u(x).
$$
Thus, by \eqref{cliffnu} and $\ell=\frac{m_1}2$,
$$
\int\scal x{\hat c}^3\,d\nu(c)=\frac{2}{\sqrt3\,\ell m_0}\,u(x)=\frac{4}{\sqrt3\,m_0m_1}\,u(x),
$$
which is \eqref{cliffwaring}.

\emph{The Killing operator.} In the same way, the mixed terms $\eta\otimes\zeta$ have zero integral, $\int_{S_\eta}\zeta\otimes\zeta\,d\sigma_\eta=\frac1\ell\Pi(\eta)$ and $\int_{S^{m_0-1}}\Pi(\eta)\,d\sigma_{m_0}=\frac12\Id$, so that
$$
\int\hat c\otimes\hat c\,d\nu=\frac1{3m_0}\Id_{\mathbb V_0}\oplus\frac2{3m_1}\Id_{\mathbb V_1}.
$$
Since $\mu=\frac{\sqrt3}4m_0m_1\,\nu$ is a representing measure and $|c|=\frac{\sqrt3}2$, \eqref{mom23} gives
$$
\tfrac{\sqrt3}{12}\,\Kill=\int\hat c\otimes\hat c\,d\mu=\tfrac{\sqrt3}{12}\bigl(m_1\Id_{\mathbb V_0}\oplus2m_0\Id_{\mathbb V_1}\bigr),
$$
which is the formula for $\Kill$, and $\trace \Kill=m_0m_1+2m_0m_1=3m_0m_1$. The value of $R(u)$ now follows from \eqref{ratio}.
\end{proof}

By the formula for $\Kill$, a polar algebra of type $(m_0,m_1)$ is an Einstein Hsiang algebra if and only if $m_1=2m_0$, i.e.\ $n=3m_0$, and then $\nu$ is a spherical $2$-design (\Cref{cor:design}). By the Hurwitz--Radon bound this forces $m_0\in\{1,2,4,8\}$: these are the four mutants \cite{Tkachev-book}. For the cubic $x_1x_2x_3$ ($\Pol(1,2)$) the measure $\nu$ is the uniform measure on the four vertices of a tetrahedron, and for the Lawson cubic ($\Pol(2,2)$) and the cubic of $\Pol(1,4)$ it is the arc length on one circle and on two circles, respectively (see \Cref{pro:lawson,ex:polar14}). The values of $\trace \Kill$ and $R(u)$ in \Cref{tab:ratio} for the polar algebras agree with \Cref{the:clifford}.

\subsection{Steiner decompositions and finite quadratures}\label{sec:psts}
The simplest Hsiang cubic $x_1x_2x_3$ ($\Pol(1,2)$) already has a finite idempotent quadrature (\Cref{ex:x1x2x3}): four equal atoms at the vertices of a tetrahedron. Many Hsiang cubics are assembled from copies of it along a combinatorial pattern, and then the quadrature is assembled in the same way.

\begin{definition}\label{def:psts}
A cubic form $u$ on $\R{n}$ admits a \emph{Steiner decomposition} if there are an orthonormal basis $e_1,\dots,e_n$, a family $B$ of $3$-element subsets $b=\{b_1,b_2,b_3\}\subset\{1,\dots,n\}$ (\emph{blocks}) such that any two distinct indices lie in at most one block, and signs $\varepsilon_b\in\{\pm1\}$, such that
\begin{equation}\label{psts}
u(x)=\sum_{b\in B}\varepsilon_b\,x_{b_1}x_{b_2}x_{b_3}.
\end{equation}
\end{definition}

In combinatorial terms, $(\{1,\dots,n\},B)$ is a partial Steiner triple system decorated by the signs $\varepsilon_b$; Hsiang algebras admitting such decompositions are studied in \cite{HsiangStrongRigidity}.

\begin{proposition}\label{pro:psts}
Let $u$ be a cubic form on $\R{n}$, not necessarily a Hsiang cubic, which admits a Steiner decomposition \eqref{psts}, and let $\alg=\alg(u)$ be its metrized algebra. Then:
\begin{enumerate}
\item[(i)] for every block $b$, the span of $e_{b_1},e_{b_2},e_{b_3}$ is a subalgebra of $\alg$ isomorphic to the algebra of $x_1x_2x_3$, and the four vectors of the set
$$
\mathcal I_b=\Bigl\{\tfrac12\bigl(s_1e_{b_1}+s_2e_{b_2}+s_3e_{b_3}\bigr):\ s\in\{\pm1\}^3,\ s_1s_2s_3=\varepsilon_b\Bigr\}
$$
are idempotents of $\alg$, called the \emph{block idempotents} of $b$; the sets $\mathcal I_b$ are pairwise disjoint, and $\mathcal I_B=\bigcup_{b\in B}\mathcal I_b$ has $4|B|$ elements;
\item[(ii)] with $\hat c=c/|c|$,
\begin{equation}\label{pstswaring}
u(x)=\frac{\sqrt3}{8}\sum_{c\in\mathcal I_B}\scal x{\hat c}^3 ,
\end{equation}
and the Killing operator is diagonal,
\begin{equation}\label{pstsA}
\Kill=2\operatorname{diag}\bigl(r(1),\dots,r(n)\bigr),\qquad \trace \Kill=6|B|,
\end{equation}
where the \emph{replication number} $r(i)$ is the number of blocks containing $i$.
\end{enumerate}
Consequently, if $u$ is a Hsiang cubic, then the measure with mass $\frac{\sqrt3}8$ at each point of $\mathcal I_B$ is a representing measure: $u$ admits a finite idempotent quadrature with equal weights, and $u$ is stationary.
\end{proposition}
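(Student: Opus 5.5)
The plan is to reduce everything to the single-block computation for $x_1x_2x_3$ and then assemble the blocks, using the Steiner condition to kill all cross terms.

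\emph{Step 1 (subalgebra and block idempotents).} By \eqref{prod}, $\scal{e_ie_j}{e_k}=\partial_i\partial_j\partial_k u$. This third derivative is $\varepsilon_b$ if $\{i,j,k\}=b$ is a block, and $0$ otherwise; it vanishes in particular whenever two of the indices coincide. Hence $e_{b_1}e_{b_2}=\varepsilon_be_{b_3}$: any third index $k$ forming a nonzero product with $b_1,b_2$ would give a second block containing $\{b_1,b_2\}$, which the Steiner condition forbids. Likewise $e_{b_i}^2=0$, since $\partial_i^2\partial_k u=0$ for all $k$ (each monomial is squarefree). So $\mathrm{span}(e_{b_1},e_{b_2},e_{b_3})$ is a subalgebra, isomorphic to the algebra of $x_1x_2x_3$ after replacing $e_{b_3}$ by $\varepsilon_be_{b_3}$.

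For $c=\frac12\sum s_ie_{b_i}$ we get
$$c^2=\tfrac12\varepsilon_b\bigl(s_1s_2e_{b_3}+s_1s_3e_{b_2}+s_2s_3e_{b_1}\bigr),$$
which equals $c$ exactly when $s_1s_2s_3=\varepsilon_b$. Disjointness of the sets $\mathcal I_b$ is immediate, since two distinct blocks have different supports.

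\emph{Step 2 (Waring identity).} Every $\hat c\in\mathcal I_b$ has the form $\hat c=\frac1{\sqrt3}\sum s_ie_{b_i}$. Expand $\sum_{s_1s_2s_3=\varepsilon_b}(s_1x_{b_1}+s_2x_{b_2}+s_3x_{b_3})^3$ and average over the four sign patterns. The pure cubes and the terms $x_i^2x_j$ carry an odd number of free signs ($s_i$ alone or $s_j$ alone), and each such sign sums to zero over $\{s: s_1s_2s_3=\varepsilon_b\}$. The only survivor is the term $6s_1s_2s_3\,x_{b_1}x_{b_2}x_{b_3}$, which contributes $24\varepsilon_b x_{b_1}x_{b_2}x_{b_3}$. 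With the factor $3^{-3/2}$ this gives
$$\sum_{c\in\mathcal I_b}\scal x{\hat c}^3=\tfrac{8}{\sqrt3}\,\varepsilon_b x_{b_1}x_{b_2}x_{b_3}.$$
Summing over the blocks yields \eqref{pstswaring}.

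\emph{Step 3 (Killing operator).} Compute $\scal{\Kill e_i}{e_j}=\sum_{k,l}\Tens_{ikl}\Tens_{jkl}$. For $i\ne j$, a nonzero term requires a pair $\{k,l\}$ lying in a block with $i$ and in a block with $j$. By the Steiner property this is a single block containing $\{i,j,k,l\}$, which is impossible since a block has only three elements. For $i=j$, each block through $i$ contributes the two ordered pairs $(k,l)$, giving $2r(i)$. Finally $\sum_i r(i)=3|B|$, so $\trace\Kill=6|B|$.

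\emph{Step 4 (consequence for Hsiang cubics).} If $u$ is a Hsiang cubic, the points of $\mathcal I_B$ are idempotents of $\alg$. By \eqref{pstswaring}, the measure with equal masses $\frac{\sqrt3}8$ on $\mathcal I_B$ represents $u$. Representing measures are Kolmogorov measures, so \Cref{the:kolm} gives stationarity. As a consistency check, the total mass $\frac{\sqrt3}8\cdot4|B|=\frac{\sqrt3}{12}\cdot6|B|$ agrees with \eqref{mass}.

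The step needing most care is Step 1, namely checking that the Steiner condition (two indices lie in at most one block) rules out every extra product. The rest is bookkeeping.
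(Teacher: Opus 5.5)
Your proposal is correct and follows essentially the same route as the paper: structure constants read off from the squarefree monomials, the Steiner condition killing extra products, the sign-averaging cancellation over the four admissible sign vectors, and the Steiner condition again making the Killing operator diagonal. The paper computes the quadratic form $\trace L(x)^2$ where you compute the matrix entries of $\Kill$ directly, which is the same argument. In Step~4 the paper also notes that the Hsiang property forces every idempotent to have length $\frac{\sqrt3}2$, the length of the block idempotents; this puts $u$ automatically in the normalized scale that your mass check $\frac{\sqrt3}{12}\cdot6|B|$ implicitly assumes.
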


\begin{proof}
The structure constants $\scal{e_ie_j}{e_k}=\partial_i\partial_j\partial_ku$ equal $\varepsilon_b$ if $\{i,j,k\}=b\in B$ and vanish otherwise; in particular $e_i^2=0$, since no monomial in \eqref{psts} contains a square. Because a pair $\{i,j\}$ lies in at most one block, $e_ie_j=\varepsilon_be_k$ for $\{i,j,k\}=b$, and $e_ie_j=0$ if $\{i,j\}$ lies in no block. Thus the span of $e_{b_1},e_{b_2},e_{b_3}$ is closed under multiplication, and the map $e_{b_1}\mapsto e_{b_1}$, $e_{b_2}\mapsto e_{b_2}$, $e_{b_3}\mapsto\varepsilon_be_{b_3}$ identifies it with the algebra of $x_1x_2x_3$. This proves the first claim of (i). For $c\in\mathcal I_b$ as in (i),
$$
c^2=\tfrac14\cdot2\sum_{\{i,j,k\}=b}s_is_j\,\varepsilon_b\,e_{k}=\tfrac12\sum_{k\in b}s_k\,e_k=c,
$$
since $s_is_j\varepsilon_b=s_is_js_1s_2s_3=s_k$. Two distinct blocks differ in at least one index, so their block idempotents have different supports, and the sets $\mathcal I_b$ are disjoint.

For (ii), fix a block $b$ and write $\hat c=\frac1{\sqrt3}(s_1e_{b_1}+s_2e_{b_2}+s_3e_{b_3})$ for $c\in\mathcal I_b$. Expanding $(s_1x_{b_1}+s_2x_{b_2}+s_3x_{b_3})^3$ and summing over the four admissible sign vectors $s$, the monomials $x_{b_i}^3$ and $x_{b_i}^2x_{b_j}$ cancel, because each of $s_i$ and $s_j$ takes the values $\pm1$ equally often on this set (with $s_i^3=s_i$, $s_i^2=1$), while the monomial $6s_1s_2s_3\,x_{b_1}x_{b_2}x_{b_3}=6\varepsilon_b\,x_{b_1}x_{b_2}x_{b_3}$ is the same for all four. Hence
$$
\sum_{c\in\mathcal I_b}\scal x{\hat c}^3=\frac{24}{3\sqrt3}\,\varepsilon_b\,x_{b_1}x_{b_2}x_{b_3},\qquad\text{i.e.}\qquad \varepsilon_b\,x_{b_1}x_{b_2}x_{b_3}=\frac{\sqrt3}8\sum_{c\in\mathcal I_b}\scal x{\hat c}^3,
$$
and summation over $b\in B$ gives \eqref{pstswaring}. Finally, for $x=\sum_ix_ie_i$ and fixed $j,k$ there is at most one $i$ with $\{i,j,k\}\in B$, so that
$$
\trace L(x)^2=\sum_{j,k}\Bigl(\sum_ix_i\scal{e_ie_j}{e_k}\Bigr)^2=\sum_ix_i^2\,\#\{(j,k):\{i,j,k\}\in B\}=2\sum_ir(i)\,x_i^2,
$$
which is \eqref{pstsA}; summing, $\trace \Kill=2\sum_ir(i)=6|B|$. If $u$ is a Hsiang cubic, all idempotents have length $|c|=\frac{\sqrt3}2$, which is the length of the block idempotents, and \eqref{pstswaring} is \eqref{repmeas}; its total mass $\frac{\sqrt3}2|B|=\frac{|c|}6\trace \Kill$ agrees with \eqref{mass}.
\end{proof}

\begin{example}\label{ex:psts}
(a) For $x_1x_2x_3$ ($\Pol(1,2)$) there is one block, and the four block idempotents form the tetrahedron of \Cref{ex:x1x2x3}.

(b) For $u=\det X$ on $M_3(\R{})$ the blocks are the permutations $\sigma\in S_3$, $b_\sigma=\{(1,\sigma1),(2,\sigma2),(3,\sigma3)\}$, with $\varepsilon_{b_\sigma}=\operatorname{sgn}\sigma$. The $24$ block idempotents are the matrices $\frac12P$, where $P$ runs over the signed permutation matrices with $\det P=1$, i.e.\ over the octahedral rotation group $O\subset SO(3)$. Thus the finite group $O$ replaces the invariant measure on $\Idm(\alg)=\frac12SO(3)$ of \Cref{the:homog}.

(c) For a mutant $\re\bigl((x_1x_2)x_3\bigr)$ on $\mathbb K_k^3$, in the standard basis of $\mathbb K_k$, the blocks are the triples $(e_i,e_j,e_l)$ with $\re\bigl((e_ie_j)e_l\bigr)\ne0$; there are $k^2$ of them and $4k^2$ block idempotents.

(d) For the tripling $u=\scal{x_1\times x_2}{x_3}$ of a vector product algebra $(\R{k},\times)$, $k\in\{3,7\}$ (\Cref{sec:peetre}), the blocks are the triples $(e_i,e_j,e_l)$ with $\scal{e_i\times e_j}{e_l}\ne0$: there are $k(k-1)$ of them. For $k=3$ this is (b) again, and for $k=7$ the $21$-dimensional exceptional algebra $\mathrm{trip}(\operatorname{Im}\mathbb O)$ has an idempotent quadrature with $168$ equal atoms. This algebra is neither polar nor covered by \Cref{the:homog}.
\end{example}

\begin{remark}[Steiner cubics which are not Hsiang]\label{rem:nonhsiang}
Parts (i) and (ii) of \Cref{pro:psts} hold for every cubic form $u$ with a Steiner decomposition: the block idempotents have length $\frac{\sqrt3}2$, the value $m=u(\hat c)=3^{-3/2}$ at them is the same, and \eqref{pstswaring} is a positive quadrature on $4|B|$ tetrahedral nodes. Moreover, $u$ is harmonic, and the proof of \Cref{the:ratio} gives $\|u\|_2^2=\trace \Kill /(6N_n)=|B|/N_n$. Hence, with $M=\max_Su\ge m$,
\begin{equation}\label{nonhsiang}
R(u)=\frac{N_n\,M^2}{|B|}=\frac{N_n}{27|B|}\Bigl(\frac Mm\Bigr)^2\ \ge\ \frac{N_n}{27|B|},
\end{equation}
with equality if and only if the block idempotents are maximum points of $u|_S$. For a Hsiang cubic this is automatic by \Cref{the:equi}. In general it fails: for the Fano plane, $u=\sum x_ix_jx_k$ over its seven lines on $\R7$, the point $x=(1,\dots,1)/\sqrt7$ gives $M\ge7^{-1/2}$, so $(M/m)^2\ge\frac{27}7$; the block idempotents are not maximum points, and the tetrahedral quadrature carries no information about the first variation of $R$.

The combinatorics alone is not sufficient: the sign decoration $(\varepsilon_b)$ of the blocks, which is determined by the algebra and not by the Steiner system, matters as well \cite{HsiangStrongRigidity}. Thus the Steiner system provides the scale $N_n/(27|B|)$, and the Hsiang identity, through the signs, is what makes this scale the actual value of $R$.
\end{remark}

A Steiner decomposition is called \emph{regular} if all replication numbers are equal, $r(i)=r$; then $|B|=nr/3$. All decompositions in \Cref{ex:psts} are regular, with $r=1,2,k,k-1$ respectively.

\begin{corollary}\label{cor:pstsreg}
Let $u$ be a Hsiang cubic with a regular Steiner decomposition with replication number $r$. Then $\alg$ is an Einstein Hsiang algebra with $\Kill=2r\Id$,
\begin{equation}\label{pstsR}
R(u)=\frac{(n+2)(n+4)}{9r},
\end{equation}
and the $8|B|=\frac83nr$ points $\pm\hat c$, $c\in\mathcal I_B$, form a spherical $3$-design on $S^{n-1}$.
\end{corollary}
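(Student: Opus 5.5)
The argument combines \Cref{pro:psts}, \Cref{the:ratio} and \Cref{cor:design}; every step is immediate.

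\emph{Killing operator.} By \eqref{pstsA}, $\Kill=2\operatorname{diag}(r(1),\dots,r(n))$. Regularity means $r(i)=r$ for all $i$, so $\Kill=2r\Id$. By \Cref{def:einstein}, $\alg$ is therefore an Einstein Hsiang algebra with $\nu=2r$.

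\emph{Chebyshev ratio.} Insert $\nu=2r$ into the Einstein case of \Cref{the:ratio}:
$$
R(u)=\frac{2(n+2)(n+4)}{9\nu}=\frac{(n+2)(n+4)}{9r},
$$
which is \eqref{pstsR}. As a consistency check, \eqref{ratio} with $\trace\Kill=6|B|=2nr$ gives the same value.

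\emph{Design property.} By \Cref{pro:psts}, since $u$ is a Hsiang cubic, the measure $\mu$ with equal mass $\frac{\sqrt3}8$ at each of the $4|B|$ block idempotents is a representing measure supported on $\Idm(\alg)$. \Cref{cor:design} applies to it, and the Einstein property gives (iii): the symmetrized measure $\frac12(\mu+(-1)_*\mu)$ is a spherical $3$-design.

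This symmetrized measure assigns equal weight to each point $\pm\hat c$, $c\in\mathcal I_B$. These $8|B|$ points are pairwise distinct, for two reasons.
\begin{enumerate}
\item[(a)] The sets $\mathcal I_b$ are disjoint by \Cref{pro:psts}(i).
\item[(b)] $-\hat c$ is never itself a block idempotent. Within one block, negating all three signs flips the product $s_1s_2s_3$, so $-\hat c\notin\mathcal I_b$. Across distinct blocks the supports differ.
\end{enumerate}
Hence the uniform measure on these $8|B|=\frac83nr$ points is a $3$-design, using $|B|=nr/3$.

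To check (iii) of \Cref{cor:design} directly, one can use \Cref{the:moments}. The odd moments of the symmetrized measure vanish. Its second moment is $\frac{|c|}6\Kill=\frac{\sqrt3}{6}r\Id$ and its total mass is $\frac{\sqrt3}2|B|=\frac{\sqrt3}{6}nr$, so after normalization the second moment is $\frac1n\Id$, as a $3$-design requires.

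The only point that needs any care is the distinctness of the $8|B|$ points, i.e.\ that the antipodal set does not overlap the block idempotents. This follows from the sign constraint $s_1s_2s_3=\varepsilon_b$ as in (b).
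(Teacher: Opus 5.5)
Your proposal is correct and follows the paper's proof essentially step by step: $\Kill=2r\Id$ from \eqref{pstsA}, the value of $R(u)$ from \eqref{ratio}, the $3$-design from the equal-weight representing measure of \Cref{pro:psts} together with \Cref{cor:design}, and the same sign argument for the distinctness of the $8|B|$ points. Using the Einstein form of \Cref{the:ratio} instead of the general formula, and the direct moment check via \Cref{the:moments}, are only cosmetic variations; both computations are correct.
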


\begin{proof}
If $r(i)=r$ for all $i$, then \eqref{pstsA} gives $\Kill=2r\Id$, so $\alg$ is Einstein, and $\trace \Kill=2rn$. Hence, by \eqref{ratio},
$$
R(u)=\frac{2N_n}{9\trace \Kill}=\frac{2n(n+2)(n+4)}{18rn}=\frac{(n+2)(n+4)}{9r}.
$$
By \Cref{pro:psts}, the measure $\mu$ with equal masses at the points of $\mathcal I_B$ is a representing measure of $u$. Since $\alg$ is Einstein, \Cref{cor:design} shows that the symmetrized measure $\frac12(\mu+(-1)_*\mu)$, which has equal masses at the $8|B|$ points $\pm\hat c$, $c\in\mathcal I_B$, is a spherical $3$-design. These points are pairwise distinct: the $\hat c$ with $c\in\mathcal I_B$ are distinct by \Cref{pro:psts}(i), and $\hat c\ne-\hat c'$ for $c,c'\in\mathcal I_B$: for different blocks the supports differ, and within one block the condition $s_1s_2s_3=\varepsilon_b$ is not preserved by $s\mapsto-s$.
\end{proof}

The Peirce profile determines $r$. If $n_{-1/2}=3d+2$, which is the case for the mutants and for all exceptional algebras with $n_{-1/2}\ne0$, the second trace identity of \cite{Tkachev-book} gives $\Kill=2(n_{-1}+d+1)\Id$, so that, by \eqref{pstsA} and the dimension formula $n=3n_{-1}+2n_{-1/2}-1$,
\begin{equation}\label{pstsr}
r=n_{-1}+d+1,\qquad n=3(r+d),\qquad |B|=r(r+d),
\end{equation}
cf.\ \cite{HsiangStrongRigidity}. Hence a Hsiang cubic with a regular Steiner decomposition has exactly $4r(r+d)$ block idempotents and $R(u)=\frac{(n+2)(n+4)}{3(n-3d)}$. \Cref{tab:psts} lists the Steiner decompositions of the Hsiang algebras known to exist.

\begin{table}[ht]\small
\renewcommand\arraystretch{1.25}
\begin{tabular}{c|c|l|c|c|c|c}
$n$ & $\mathsf{pr}(\alg)$ & Steiner system & $|B|$ & $r$ & atoms $4|B|$ & $R(u)$\\\hline
$3k$ & $(k-1,2,2k-2)$ & Latin square of order $k$ & $k^2$ & $k$ & $4k^2$ & $\frac{(3k+2)(3k+4)}{9k}$\\\hline
9 & $(0,5,3)$ & $\mathrm{GQ}(2,1)$ & 6 & 2 & 24 & $\frac{143}{18}$\\
15 & $(0,8,6)$ & $\mathrm{GQ}(2,2)$ & 15 & 3 & 60 & $\frac{323}{27}$\\
27 & $(0,14,12)$ & $\mathrm{GQ}(2,4)$ & 45 & 5 & 180 & $\frac{899}{45}$\\\hline
18 & $(1,8,8)$ & doubling of $\mathrm{GQ}(2,1)$ & 24 & 4 & 96 & $\frac{110}9$\\
30 & $(1,14,14)$ & doubling of $\mathrm{GQ}(2,2)$ & 60 & 6 & 240 & $\frac{544}{27}$\\
54 & $(1,26,26)$ & doubling of $\mathrm{GQ}(2,4)$ & 180 & 10 & 720 & $\frac{1624}{45}$\\\hline
21 & $(4,5,11)$ & tripling of the Fano plane & 42 & 6 & 168 & $\frac{575}{54}$\\\hline
12 & $(1,5,5)$ & none & -- & -- & -- & $\frac{224}{27}$
\end{tabular}
\medskip
\caption{Steiner decompositions and finite idempotent quadratures of the Hsiang algebras of \Cref{ex:psts} and of the exceptional Hsiang algebras with $n_{-1/2}\ne0$ known to exist \cite{HsiangStrongRigidity}. The first row is the family of mutants $\Pol(k,2k)$, $k\in\{1,2,4,8\}$ ($k=1$ is $x_1x_2x_3$); the next rows are the algebras with $n_{-1}=0$ ($n=9$: $\det X$), their doublings with $n_{-1}=1$ ($n=18$: $\re\det z$ on $M_3(\mathbb C)$), the tripling $\mathrm{trip}(\operatorname{Im}\mathbb O)$, and the algebra $\re\det z$ on $\mathrm{Sym}_3(\mathbb C)$. The values of $R(u)$ follow from \eqref{pstsR}, and for $n=12$ from \eqref{ratio} with $\Kill=6\Id$.}\label{tab:psts}
\end{table}

The Steiner systems in \Cref{tab:psts} are classical objects. For $n_{-1}=0$ ($n=9,15,27$) the blocks are the lines of the generalized quadrangles $\mathrm{GQ}(2,t)$, $t=1,2,4$: the $3\times3$ grid, the quadrangle $W(2)$ with $15$ points and $15$ lines, and the quadrangle of order $(2,4)$ with $27$ points and $45$ lines \cite{Payne-Thas}. For $n_{-1}=1$ ($n=18,30,54$) the system is obtained from the previous one by \emph{doubling}: the cubic form $\re u(x+\sqrt{-1}\,y)$ on $\R{n}\times\R{n}$ replaces every block $\{i,j,k\}$ by the four blocks of a Pasch configuration,
$$
x_ix_jx_k\ \longmapsto\ x_ix_jx_k-x_iy_jy_k-y_ix_jy_k-y_iy_jx_k ,
$$
so that $|B|$ is multiplied by $4$ and $r$ by $2$. For $\mathrm{trip}(\operatorname{Im}\mathbb O)$ the blocks are the triples $\{(i,1),(j,2),(k,3)\}$ with $\{i,j,k\}$ a line of the Fano plane, in all six orders; the same construction applied to the single line of $\{1,2,3\}$ gives the $3\times3$ grid of $\det X$. By a theorem announced in \cite{HsiangStrongRigidity}, every block of the Steiner decomposition of an exceptional Hsiang algebra lies in exactly $n_{-1}$ Pasch configurations; in \Cref{tab:psts} this number is $0$, $1$ and $4$ for the three exceptional families, and $k-1$ for the mutants. The $12$-dimensional algebra has no Steiner decomposition (announced in \cite{HsiangStrongRigidity}) and is covered by \Cref{cor:n12} instead.

The values of $R(u)$ in \Cref{tab:psts} agree with \Cref{tab:ratio}. Conversely, \eqref{pstsr} predicts the size of the finite quadrature, and the value of $R$, for every hypothetical exceptional Hsiang algebra with a regular Steiner decomposition, in terms of its Peirce profile alone.

\begin{figure}[ht]
\centering
\begin{tikzpicture}[x={(1cm,0cm)},y={(-0.45cm,-0.3cm)},z={(0cm,1cm)},scale=1.15,line join=round]
\begin{scope}[x={(1cm,0cm)},y={(0cm,1cm)}]\draw[gray!60] (0,0) circle[radius=2.05]; \draw[gray!40] (0,0) ellipse[x radius=2.05,y radius=0.45];\end{scope}
\draw[->,gray] (0,0,0) -- (2.5,0,0) node[black,font=\small,right]{$e_i$};
\draw[->,gray] (0,0,0) -- (0,4.3,0) node[black,font=\small,left]{$e_j$};
\draw[->,gray] (0,0,0) -- (0,0,2.4) node[black,font=\small,above]{$e_k$};
\foreach \a in {-1,1}{\foreach \b in {-1,1}{
  \draw[gray!45] (-1,\a,\b)--(1,\a,\b); \draw[gray!45] (\a,-1,\b)--(\a,1,\b); \draw[gray!45] (\a,\b,-1)--(\a,\b,1);}}
\coordinate (m1) at (-1,-1,-1); \coordinate (m2) at (-1,1,1); \coordinate (m3) at (1,-1,1); \coordinate (m4) at (1,1,-1);
\foreach \p/\q in {m1/m2,m1/m3,m1/m4,m2/m3,m2/m4,m3/m4} \draw[dashed,blue!60] (\p)--(\q);
\coordinate (c1) at (1,1,1); \coordinate (c2) at (1,-1,-1); \coordinate (c3) at (-1,1,-1); \coordinate (c4) at (-1,-1,1);
\fill[red!25,opacity=0.5] (c1)--(c2)--(c4)--cycle;
\fill[red!15,opacity=0.5] (c1)--(c3)--(c4)--cycle;
\foreach \p/\q in {c1/c2,c1/c3,c1/c4,c2/c3,c2/c4,c3/c4} \draw[thick,red!70!black] (\p)--(\q);
\foreach \p in {c1,c2,c3,c4} \fill[red!70!black] (\p) circle[radius=1.6pt];
\foreach \p in {m1,m2,m3,m4} \fill[blue!60] (\p) circle[radius=1.3pt];
\node[font=\small,right] at (c1) {$\hat c$};
\end{tikzpicture}\qquad
\begin{tikzpicture}[scale=1.0]
\foreach \i in {0,1,2}{\draw[thick,red!70!black] (0,\i)--(2,\i); \draw[thick,red!70!black] (\i,0)--(\i,2);}
\foreach \i in {0,1,2}{\foreach \j in {0,1,2}{\fill (\i,\j) circle[radius=2.2pt];}}
\node[font=\small] at (1,-0.6) {$\mathrm{GQ}(2,1)$: $9$ points, $6$ lines};
\end{tikzpicture}
\caption{Left: an atom, i.e.\ the four nodes $\hat c$, $c\in\mathcal I_b$ (solid), and their antipodes (dashed). Right: the Steiner system $\mathrm{GQ}(2,1)$ of $\det X$; each line is an atom.}\label{fig:atom}
\end{figure}

\smallskip\noindent\textit{Geometric interpretation.} Every block $b=\{i,j,k\}$ is an \emph{atom} of the quadrature \eqref{pstswaring}: the four nodes $\hat c$, $c\in\mathcal I_b$, are the vertices of a regular tetrahedron inscribed in the unit sphere of the coordinate $3$-space $\operatorname{span}(e_i,e_j,e_k)$, and their antipodes $-\hat c$ form the dual tetrahedron, so that together they are the eight vertices of a cube (\Cref{fig:atom}, left). The quadrature of \Cref{pro:psts} thus puts equal weights at the vertices of $|B|$ such tetrahedra, and the Steiner system describes how they are glued: two atoms share a coordinate axis exactly when the corresponding blocks meet. For instance, the Steiner system of $\det X$ is the $3\times3$ grid $\mathrm{GQ}(2,1)$, whose six lines give six atoms and the $24$ nodes of \Cref{ex:psts}(b) (\Cref{fig:atom}, right). In this sense the incidence geometry of \Cref{tab:psts} is a discrete skeleton of $u$, in the same way as the idempotent variety is a continuous one (\Cref{sec:intro}).

\smallskip\noindent\textit{Relation to \Cref{pro:peetre}.} In \Cref{ex:psts}(c) and (d) every block meets each of the three groups of variables $x_1,x_2,x_3$ exactly once, so these are block-trilinear cubics in the sense of \Cref{def:blocktri} whose trilinear form has entries in $\{0,\pm1\}$. In this sense \Cref{pro:psts} and \Cref{pro:peetre} are two faces of the same combinatorial structure: the first gives the quadrature (first variation), the second the size of $\|T\|_2$ (the value of $R$).

\subsection{Homogeneous idempotent varieties}
Let $G$ be the group of orthogonal automorphisms of $\alg$, i.e.\ of orthogonal maps $g$ with $u\circ g=u$; it is a compact Lie group acting on $\Idm(\alg)$. Since $Z_{\hat c}(g^{-1}x)=Z_{g\hat c}(x)$, averaging over $G$ preserves \eqref{kolmrep} and \eqref{repmeas}.

\begin{theorem}\label{the:homog}
Assume that
\begin{enumerate}
\item[(a)] $G$ acts transitively on $\Idm(\alg)$; let $\nu$ be the $G$-invariant probability measure on $\Idm(\alg)$;
\item[(b)] the $G$-invariant cubic forms on $\R{n}$ are $\R{}u$.
\end{enumerate}
Then $\mu=\frac{|c|}6\trace \Kill\cdot\nu$ is a representing measure of $u$. In particular $u$ is stationary, and \Cref{the:moments} applies.
\end{theorem}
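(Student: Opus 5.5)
The plan is to average the cubes over the idempotent variety and use hypothesis (b) to identify the result. Define the cubic form
$$
p(x)=\int_{\Idm(\alg)}\scal x{\hat c}^3\,d\nu(c).
$$
It is well defined, since $c\mapsto\scal x{\hat c}^3$ is continuous on the compact set $\Idm(\alg)$. For $g\in G$ we have $g\,\Idm(\alg)=\Idm(\alg)$, because $g$ is an automorphism, and $g$ is orthogonal, so $\widehat{gc}=g\hat c$. Hence
$$
p(g^{-1}x)=\int\scal{x}{g\hat c}^3\,d\nu(c)=p(x)
$$
by the $G$-invariance of $\nu$ from (a). So $p$ is a $G$-invariant cubic form, and by (b) we get $p=\gamma u$ for some $\gamma\in\R{}$.

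The key step is to show that $\gamma>0$. I would pair both sides with $u$ using the Fischer inner product. By \eqref{eval}, $\Fs{u}{\scal x{\hat c}^3}=6u(\hat c)=6M$ for every $c\in\Idm(\alg)$, because every normalized idempotent is a maximum point (\Cref{the:equi}). Integrating gives $\Fs up=6M$. On the other side, $\Fs up=\gamma\Fs uu$ with $\Fs uu=\frac16\trace \Kill>0$, as in the proof of \Cref{the:ratio}. Therefore
$$
\gamma=\frac{36M}{\trace \Kill}=\frac{6}{|c|\trace \Kill}>0,
$$
using $6M=1/|c|$. It follows that $u=\gamma^{-1}p=\int\scal x{\hat c}^3\,d\mu$ with $\mu=\gamma^{-1}\nu=\frac{|c|}6\trace \Kill\cdot\nu$, which is \eqref{repmeas}. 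The mass of $\mu$ agrees with \eqref{mass}, as it must.

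Stationarity then follows directly. By the discussion after \eqref{repmeas}, a representing measure is a Kolmogorov measure, so \Cref{the:kolm} applies, and \Cref{the:moments} holds for $\mu$. The only delicate point is the positivity of $\gamma$, which could in principle fail, for instance with $\gamma=0$. The argument above handles it via the equal length of idempotents: the pairing $\Fs{u}{\scal x{\hat c}^3}$ equals the same positive constant $6M$ at every node, so it cannot average to zero.
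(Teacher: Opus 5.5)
Your proof is correct and is essentially the paper's argument: average the cubes against $\nu$, use (b) to get $p=\gamma u$, and pair with $u$ to show $\gamma>0$, using that $u(\hat c)=M$ at every node. The paper pairs in $L^2(\sigma)$ via \Cref{lem:zonal}, which is proportional to your Fischer pairing on harmonic $u$, and it also proves $\int\hat c\,d\nu=0$ by a fixed-vector argument. You can skip that step, as you do: $p=\gamma u$ already gives the representing identity, and the vanishing first moment then follows from $\Delta u=0$.
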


\begin{proof}
If $v\ne0$ were a $G$-fixed vector, then $|x|^2\scal xv$ would be a $G$-invariant cubic form which is not a multiple of the harmonic form $u$; by (b), $G$ has no non-zero fixed vector. The vector $m=\int\hat c\,d\nu$ is $G$-fixed, hence $m=0$. The cubic form $p(x)=\int\scal x{\hat c}^3\,d\nu(c)$ is $G$-invariant, so $p=\gamma u$ by (b), and by \eqref{waringtrace} with $m=0$ also $p=\int Z_{\hat c}\,d\nu$. Pairing with $u$ and using \Cref{lem:zonal},
$$
\gamma\|u\|_2^2=\int_Spu\,d\sigma=\frac6{N_n}\int u(\hat c)\,d\nu(c)=\frac{6M}{N_n}>0 .
$$
Hence $\mu=\gamma^{-1}\nu$ is a representing measure, and its mass is given by \eqref{mass}.
\end{proof}

\begin{corollary}\label{cor:cartan}
For Cartan's cubics $\Cartan d$, $d=1,2,4,8$ ($n=5,8,14,26$), the invariant measure on $\Idm(\alg)$ is (up to a factor) a representing measure, $u$ is stationary, and the measure is a spherical $2$-design.
\end{corollary}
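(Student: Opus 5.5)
The plan is to apply \Cref{the:homog} to $\alg$, the traceless part of $\mathscr H_3(\mathbb K_d)$, and then conclude the design property from \Cref{cor:design}. I take $G$ to be the automorphism group of the Jordan algebra $\mathscr H_3(\mathbb K_d)$, restricted to the traceless part. These groups are $SO(3)$, $SU(3)$, $Sp(3)$ and $F_4$ for $d=1,2,4,8$. They act orthogonally for $\scal XY=\re\trace(XY)$ and preserve $\trace X^3$, hence preserve $u=\frac{\sqrt2}3\trace X^3$. So $G$ consists of orthogonal automorphisms of $\alg$.

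\emph{Hypothesis (a).} By the description of $\Idm(\alg)$ in \Cref{sec:hsiang}, idempotents correspond bijectively, via $e\mapsto\frac{3\sqrt2}4(e-\frac13I)$, to primitive idempotents of $\mathscr H_3(\mathbb K_d)$. The bijection is $G$-equivariant, since $G$ fixes $I$. It is classical that the automorphism group acts transitively on primitive idempotents, so (a) holds. Concretely, every $X$ can be diagonalized by an automorphism; for $d=8$ this is Freudenthal's theorem for $F_4$. A primitive idempotent has spectrum $(1,0,0)$, and the permutations of the diagonal are realized by automorphisms.

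\emph{Hypothesis (b).} This is the step I expect to be the main obstacle. I would use the same diagonalization. A $G$-invariant polynomial on $\alg$ is determined by its restriction to the traceless diagonal matrices $\operatorname{diag}(\lambda_1,\lambda_2,\lambda_3)$ with $\sum\lambda_i=0$. That restriction is symmetric under $S_3$, which is realized inside $G$ by permutation matrices. So invariant cubics correspond to symmetric cubics in $\lambda$ on the plane $\sum\lambda_i=0$. Modulo $e_1=\sum\lambda_i$, the ring of symmetric polynomials is generated by $e_2$ and $e_3$, and the only cubic is $e_3$, a multiple of $\sum\lambda_i^3=\trace X^3$. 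Hence the space of invariant cubics is $\R{}u$. Injectivity of the restriction is exactly what diagonalizability gives, so no separate Chevalley-type argument is needed.

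\Cref{the:homog} then shows that $\frac{|c|}6\trace\Kill\cdot\nu$ is a representing measure, so $u$ is stationary. For the design property I would use \Cref{cor:design} and check that $\alg$ is Einstein, which I would do in one of two ways. The first uses the fact that $\alg$ is exceptional and the statement in \Cref{sec:polar} that all exceptional Hsiang algebras are Einstein. The second is direct: $G$ acts irreducibly on the traceless part, because the Cartan isoparametric representations are irreducible. Equivalently, any invariant quadratic form on $\alg$ restricts to a symmetric quadratic in $\lambda$ modulo $e_1$, hence is a multiple of $e_2$, that is, of $|X|^2$. Since $\Kill$ commutes with $G$, $\Kill=\nu\Id$ and the algebra is Einstein. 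Therefore $\nu=\mu/\mu(\Idm(\alg))$ is a spherical $2$-design.
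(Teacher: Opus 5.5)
Your proposal is correct and follows essentially the same route as the paper: \Cref{the:homog} applied to the automorphism group of $\mathscr H_3(\mathbb K_d)$, with (a) from transitivity on primitive idempotents, (b) from diagonalization (your reduction to $S_3$-symmetric cubics on $\sum\lambda_i=0$ is the unpacked form of the paper's appeal to the spectral theorem and to the generation of the invariants by $\trace X,\trace X^2,\trace X^3$), and the $2$-design property from \Cref{cor:design} once the algebra is known to be Einstein. The paper simply cites the Einstein property, so your second argument is a self-contained replacement for that citation: the $G$-invariant Killing form restricts to a symmetric quadratic modulo $e_1$, hence is a multiple of $|X|^2$.
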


\begin{proof}
Let $\mathscr H_3(\mathbb K_d)$ be the Jordan algebra of Hermitian $3\times3$ matrices over $\mathbb K_d\in\{\R{},\mathbb C,\mathbb H,\mathbb O\}$. Its automorphism group $K$ preserves the trace and $\trace X^3$, hence acts on the traceless part by automorphisms of $\alg$, and acts transitively on primitive idempotents, hence on $\Idm(\alg)$ \cite[Chap.~6]{Nadirashvili-Tkachev-Vladuts}, \cite{Tkachev-book}. By the spectral theorem, the $K$-invariant polynomials on $\mathscr H_3(\mathbb K_d)$ are generated by $\trace X,\trace X^2,\trace X^3$ \cite{Faraut-Koranyi}; on the traceless part the invariant cubics are therefore $\R{}\trace X^3=\R{}u$. This gives the hypotheses (a) and (b) of \Cref{the:homog}. The algebras of Cartan's cubics are Einstein Hsiang algebras \cite{Tkachev-book}, so \Cref{cor:design} applies.
\end{proof}

\begin{remark}
The hypotheses of \Cref{the:homog} also hold for $u=\det X$ on $M_3(\R{})$: the group $SO(3)\times SO(3)$, acting by $X\mapsto aXb^{T}$, consists of orthogonal automorphisms, acts transitively on $\Idm(\alg)=\frac12SO(3)$, and its only invariant cubic forms are the multiples of $\det X$. Indeed, by the singular value decomposition every $X$ is of the form $a\,\mathrm{diag}(s_1,s_2,s_3)\,b^T$ with $a,b\in SO(3)$, so an invariant cubic form $p$ is determined by the cubic polynomial $p(\mathrm{diag}(s_1,s_2,s_3))$, which is invariant under permutations of the $s_i$ and under simultaneous sign changes of two of them; the only such cubic polynomials are the multiples of $s_1s_2s_3$.
 This is an Einstein Hsiang algebra with $\Kill=4\Id$, so the invariant measure on $\frac12SO(3)$ is (up to a factor) a representing measure and a $2$-design. For the polar algebras, for instance the Lawson cubic ($\Pol(2,2)$), the identity component of $G$ has too many invariant cubic forms, and (b) fails unless the discrete part of $G$ is used.
\end{remark}

\begin{corollary}\label{cor:n12}
Let $u(z)=\re\det z$ on $\mathrm{Sym}_3(\mathbb C)$ with $\scal zw=\re\trace z\bar w$; this is the $12$-dimensional exceptional Hsiang algebra with $\mathsf{pr}(\alg)=(1,5,5)$ \cite[Sec.~6.11.6(iv)]{Nadirashvili-Tkachev-Vladuts}. Then the hypotheses of \Cref{the:homog} hold, and $u$ is stationary.
\end{corollary}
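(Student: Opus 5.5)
We need to verify hypotheses (a) and (b) of \Cref{the:homog} for $u(z)=\re\det z$ on $\mathrm{Sym}_3(\mathbb C)$; stationarity then follows from \Cref{the:homog} and \Cref{the:kolm}. The natural group is $G=\{z\mapsto e^{i\alpha}\,a z a^T:\ a\in U(3),\ e^{3i\alpha}\det(a)^2=1\}$. Indeed, $z\mapsto aza^T$ preserves $\mathrm{Sym}_3(\mathbb C)$ and the inner product $\re\trace z\bar w$ for $a\in U(3)$, and it multiplies $\det z$ by $\det(a)^2$. The scalar phase $e^{i\alpha}$ multiplies it by $e^{3i\alpha}$. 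The constraint therefore makes $u$ invariant, and these maps are orthogonal automorphisms of $\alg$. Modulo the constraint, $G$ is essentially $SU(3)$ times a finite group ($\det a$ can be absorbed into $\alpha$).

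For (a), I would first identify $\Idm(\alg)$ as the set of maximum points. By Takagi's factorization, every $z\in\mathrm{Sym}_3(\mathbb C)$ can be written as $z=a\,\mathrm{diag}(s_1,s_2,s_3)a^T$ with $a\in U(3)$ and $s_i\ge0$, the singular values. Then $|z|^2=\sum s_i^2$ and $\re\det z=s_1s_2s_3\re(\det a^2)\le s_1s_2s_3$. By AM--GM, as in the $\det X$ example, the maximum on $S$ is $3^{-3/2}$. It is attained exactly when all $s_i=1/\sqrt3$ and $\det(a)^2=1$, that is, on $\hIdm=\{3^{-1/2}aa^T:\ a\in U(3),\ \det a=\pm1\}$. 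By \Cref{the:equi}(i) these are all normalized idempotents. The group $G$ (even $\{z\mapsto aza^T:\ \det a=\pm1\}$) maps $3^{-1/2}\Id$ onto every such point, so the action on $\Idm(\alg)$ is transitive. As a check, $\Idm\cong SU(3)/SO(3)$ up to a finite quotient, which has dimension $5=n_{1/2}$, consistent with the profile $(1,5,5)$.

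For (b), I need that the $G$-invariant real cubic forms on $\mathrm{Sym}_3(\mathbb C)\cong S^2\mathbb C^3$ are exactly $\R{}u$. I would complexify: real cubic forms correspond to polynomials of bidegree $(p,q)$ in $z,\bar z$ with $p+q=3$. The center phases $e^{i\alpha}$, with $\alpha$ restricted to the cube roots of unity when $a$ is scalar, together with $a=\omega\Id$ acting by $\omega^2$, force $p-q\equiv0\pmod 3$ for invariance. So only bidegrees $(3,0)$ and $(0,3)$ survive, since $(2,1)$ and $(1,2)$ have $p-q=\pm1$. It remains to show that the $SU(3)$-invariants in $S^3(S^2\mathbb C^3)^*$ form the line spanned by $\det$. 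This follows from the plethysm $S^3(S^2V)=S^6V\oplus S^{(4,2)}V\oplus S^{(2,2,2)}V$: only the last summand, which is $(\det)^2$, contains $SU(3)$-invariants, with multiplicity one. Hence the invariant complex cubics of bidegree $(3,0)$ are $\mathbb C\det z$, and the real invariant forms are $\re(\lambda\det z)$ with $\lambda\in\mathbb C$.

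The main obstacle is to kill $\im\det z$. The connected part of $G$ fixes $\det z$, so the full group, with its phase constraint, must be used. The map $z\mapsto e^{i\alpha}aza^T$ multiplies $\det z$ by exactly $1$, so no element of $G$ sends $\det$ to $\bar\lambda$-type phases, and $\im\det z$ survives. I would then enlarge $G$ by the antiunitary automorphism $z\mapsto\bar z$. It is orthogonal and satisfies $\re\det\bar z=\re\det z$, while it sends $\im\det z\mapsto-\im\det z$. With this conjugation included, the invariant forms are exactly $\R{}\re\det z=\R{}u$, and transitivity from (a) is unaffected. \Cref{the:homog} then gives the representing measure and stationarity.
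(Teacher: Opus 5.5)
Your proof is correct, and for the group and for hypothesis (a) it matches the paper. Your $G$ is exactly the $SU(3)$-action $z\mapsto gzg^T$: write $e^{i\alpha}aza^T=bzb^T$ with $b=e^{i\alpha/2}a$, so that $\det b=\pm1$, and note that $-b$ acts like $b$. The maximum set is identified by Takagi's factorization and AM--GM as in the paper. Like the paper, you need the complex conjugation $z\mapsto\bar z$ to eliminate $\operatorname{Im}\det z$.

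The routes differ in (b). The paper argues elementarily. By Takagi, every $SU(3)$-orbit meets the complex diagonal matrices, so an invariant cubic is determined by its restriction $p(\mathrm{diag}(a_1,a_2,a_3))$. Invariance under the torus $a_k\mapsto e^{i\psi_k}a_k$, $\sum\psi_k=0$, then leaves only the monomials $a_1a_2a_3$ and $\bar a_1\bar a_2\bar a_3$. This single weight computation disposes of all bidegrees at once.

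You instead split real cubics into bidegree components. You kill the mixed bidegrees $(2,1)$ and $(1,2)$ with the central elements $\omega\,\Id\in SU(3)$, which act by $\omega^2$. You then get multiplicity one in bidegree $(3,0)$ from the plethysm $S^3(S^2V)=S^{(6)}V\oplus S^{(4,2)}V\oplus S^{(2,2,2)}V$ and the fact that only powers of $\det$ carry $SU(3)$-invariants.

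Your argument is more structural: it gives the multiplicity-one statement directly and adapts readily to other degrees or modules. Its cost is that it rests on a quoted plethysm. The paper's argument is self-contained and uses Takagi's factorization for both (a) and (b).
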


\begin{proof}
The group $SU(3)$, acting by $z\mapsto gzg^T$, and the complex conjugation $z\mapsto\bar z$ act by orthogonal automorphisms. By Takagi's factorization every $z$ is of the form $gDg^T$ with $g\in U(3)$ and $D$ diagonal with non-negative entries. Writing $g=e^{i\theta}g'$ with $g'\in SU(3)$, and using the diagonal matrices in $SU(3)$ and the signed permutation matrices of determinant $1$, we see that every $SU(3)$-orbit meets the diagonal matrices $\mathrm{diag}(a_1,a_2,a_3)$, and that an invariant cubic form $p$ restricts to a polynomial in $a_k,\bar a_k$ which is symmetric and invariant under $a_k\mapsto e^{i\psi_k}a_k$ with $\psi_1+\psi_2+\psi_3=0$. The only cubic monomials with this invariance are $a_1a_2a_3$ and $\bar a_1\bar a_2\bar a_3$, so $p=\alpha\re\det z+\beta\operatorname{Im}\det z$. Invariance under conjugation gives $\beta=0$, which is (b). For (a), if $|z|=1$ with Takagi values $s_k$, then $\re\det z\le s_1s_2s_3\le3^{-3/2}$, with equality if and only if $s_1=s_2=s_3$ and $\det z>0$, i.e.\ $z=3^{-1/2}gg^T$ with $g\in SU(3)$ (replace $g$ by $-g$ if necessary). Thus $\hIdm$ is a single $SU(3)$-orbit.
\end{proof}
This algebra is the only exceptional Hsiang algebra known to exist with $n_{-1/2}\ne0$ which has no Steiner decomposition (announced in \cite{HsiangStrongRigidity}), so \Cref{the:homog} and \Cref{pro:psts} complement each other.

\subsection{Explicit representing measures}
\begin{example}[$\re z^3$]
The normalized idempotents are $\hat c_k=(\cos\varphi_k,\sin\varphi_k)$, $\varphi_k=\frac{2\pi k}{3}$, $k=0,1,2$, and $\sum_k\cos^3(\varphi-\varphi_k)=\frac34\cos3\varphi$. Hence the uniform measure with total mass $4M$ is a representing measure. Since on $S^1$ all three zonal harmonics $Z_{\hat c_k}$ coincide with $\frac14\cos3\varphi$, \emph{every} positive measure of that total mass satisfies \eqref{kolmrep}: Kolmogorov measures are far from unique, while the representing measure on $\Idm(\alg)$ is unique.
\end{example}

\begin{example}[$x_1x_2x_3$, $\Pol(1,2)$]\label{ex:x1x2x3}
With $\hat c_\varepsilon=\varepsilon/\sqrt3$, $\varepsilon\in\{\pm1\}^3$, $\varepsilon_1\varepsilon_2\varepsilon_3=1$, expansion gives $\sum_\varepsilon(\varepsilon\cdot x)^3=24x_1x_2x_3$. Hence
$$
x_1x_2x_3=\frac{\sqrt3}{8}\sum_\varepsilon\scal x{\hat c_\varepsilon}^3,\qquad \mu(\Idm)=\frac{\sqrt3}2,\qquad \frac{\sqrt3}8\sum_\varepsilon\hat c_\varepsilon\otimes\hat c_\varepsilon=\frac{\sqrt3}{6}\Id=\frac{|c|}6\cdot2\Id,
$$
in accordance with \eqref{mom01}--\eqref{mom23} and $\Kill=2\Id$.
\end{example}

\begin{proposition}[Lawson cubic, $\Pol(2,2)$]\label{pro:lawson}
Let $\nu=d\varphi/4\pi$ be the normalized arc length on the circle \eqref{lawsonidm}. Then
\begin{equation}\label{lawsonwaring}
u(x)=\sqrt3\int\scal x{\hat c(\varphi)}^3\,d\nu(\varphi),
\end{equation}
so $\mu=\sqrt3\,\nu$ is a representing measure, and its second moment is $\frac{\sqrt3}6\mathrm{diag}(1,1,2,2)=\frac{|c|}6\Kill$.
\end{proposition}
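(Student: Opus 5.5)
The plan is to derive \eqref{lawsonwaring} as the special case $(m_0,m_1)=(2,2)$ of \Cref{the:clifford}, after checking that the Lawson cubic is the polar cubic \eqref{clifford} in suitable coordinates and that the measure $\nu$ of \eqref{cliffnu} is the arc length on the circle \eqref{lawsonidm}. Write $x=(v_0,v_1,v_2,v_3)$, $y=(v_0,v_1)\in\mathbb V_0=\R2$, $z=(v_2,v_3)\in\mathbb V_1=\R2$. Take $P_1=\bigl(\begin{smallmatrix}0&1\\1&0\end{smallmatrix}\bigr)$ and $P_2=\bigl(\begin{smallmatrix}1&0\\0&-1\end{smallmatrix}\bigr)$; these form a symmetric Clifford system of size $(2,2)$. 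Then $\frac12\scal{P(y)z}{z}=v_0v_2v_3+\frac12v_1(v_2^2-v_3^2)$, which is the Lawson cubic. \Cref{the:clifford} then gives $u=\frac{\sqrt3\cdot4}{4}\int\scal x{\hat c}^3d\nu=\sqrt3\int\scal x{\hat c}^3d\nu$, which is the claimed constant.

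The key step is identifying $\nu$. For $\eta=(\sin\varphi,\cos\varphi)\in S^1$, $P(\eta)$ is the reflection matrix $\bigl(\begin{smallmatrix}\cos\varphi&\sin\varphi\\ \sin\varphi&-\cos\varphi\end{smallmatrix}\bigr)$, so $E_+(\eta)$ is spanned by $(\cos\frac\varphi2,\sin\frac\varphi2)$. Hence $S_\eta=\{\pm(\cos\frac\varphi2,\sin\frac\varphi2)\}$ consists of two points, each carrying mass $\frac12$. By \eqref{cliffidm}, the normalized idempotents are $\bigl(\frac1{\sqrt3}\eta,\pm\sqrt{\frac23}(\cos\frac\varphi2,\sin\frac\varphi2)\bigr)$. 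The sign $-$ corresponds to replacing $\varphi$ by $\varphi+2\pi$, so $\hIdm$ is exactly the circle $\hat c(\varphi)=c(\varphi)/|c|$, $\varphi\in[0,4\pi)$, with $c(\varphi)$ as in \eqref{lawsonidm}; indeed $|c|=\frac{\sqrt3}2$ turns $\frac1{\sqrt3},\sqrt{\frac23}$ into $\frac12,\frac1{\sqrt2}$. Under this parametrization, choosing $\varphi$ uniformly on $[0,2\pi)$ and then the sign with probability $\frac12$ is the same as choosing $\varphi$ uniformly on $[0,4\pi)$. So $\nu=d\varphi/4\pi$. The one point needing care is that the idempotent variety is this double cover and not the $\varphi\in[0,2\pi)$ arc; this matters for the normalization of the measure.

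The second moment follows from the Killing formula of \Cref{the:clifford}, $\Kill=m_1\Id_{\mathbb V_0}\oplus2m_0\Id_{\mathbb V_1}=\mathrm{diag}(2,2,4,4)$, together with \eqref{mom23}: $\int\hat c\otimes\hat c\,d\mu=\frac{|c|}6\Kill=\frac{\sqrt3}{12}\mathrm{diag}(2,2,4,4)=\frac{\sqrt3}6\mathrm{diag}(1,1,2,2)$. As a sanity check that avoids the general theorem, one can compute directly with $\mu=\sqrt3\,d\varphi/4\pi$. Then $\sqrt3\cdot\frac1{\sqrt3}\cdot\frac13\int\sin^2\varphi\,\frac{d\varphi}{4\pi}=\frac{\sqrt3}6$, and $\sqrt3\cdot\frac23\cdot\frac12=\frac{\sqrt3}3$. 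The cross terms vanish by parity in $\varphi$.

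I do not expect a real obstacle. The only place for error is matching coordinates and signs between \eqref{lawsonidm} and the Clifford system: which $v_i$ pairs with $P_1$ versus $P_2$, and the half-angle parametrization. I would settle this by checking directly that $c(\varphi)^2=c(\varphi)$ at $\varphi=0$, i.e.\ at $c=(0,\frac12,\frac1{\sqrt2},0)$. The direct trigonometric verification of \eqref{lawsonwaring} is a fallback. It amounts to expanding $\bigl(\frac{\sin\varphi}{\sqrt3}v_0+\frac{\cos\varphi}{\sqrt3}v_1+\sqrt{\frac23}(\cos\frac\varphi2v_2+\sin\frac\varphi2v_3)\bigr)^3$ and averaging over $[0,4\pi)$. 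Only the terms of type $v_iv_jv_k$ with $i\in\{0,1\}$ and $j,k\in\{2,3\}$ have nonzero frequency-zero part, because odd half-frequency powers and pure $\varphi$-frequency odd products integrate to zero.
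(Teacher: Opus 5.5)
Your proposal is correct and follows the paper's own proof: you write the Lawson cubic as the polar cubic of the Clifford system $P_1,P_2$ of size $(2,2)$, note that $\varphi$ and $\varphi+2\pi$ give the two unit vectors of $E_+(\eta)$ so that the measure of \Cref{the:clifford} is $d\varphi/4\pi$, and read off the constant $\frac{\sqrt3}4m_0m_1=\sqrt3$ and the Killing operator from that theorem. There is one small slip in your optional direct check: the $(v_0,v_0)$ entry of the second moment is $\sqrt3\cdot\frac13\int\sin^2\varphi\,\frac{d\varphi}{4\pi}=\frac{\sqrt3}6$, and the extra factor $\frac1{\sqrt3}$ you wrote would give $\frac16$ instead.
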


\begin{proof}
In the coordinates $y=(v_0,v_1)$, $z=(v_2,v_3)$ the Lawson cubic is \eqref{clifford} with
$$
P_1=\begin{pmatrix}0&1\\1&0\end{pmatrix},\qquad P_2=\begin{pmatrix}1&0\\0&-1\end{pmatrix},
$$
i.e.\ $(m_0,m_1)=(2,2)$. In \eqref{lawsonidm}, $\eta=(\sin\varphi,\cos\varphi)$, and $\varphi$, $\varphi+2\pi$ give the two unit vectors of $E_+(\eta)$; hence the measure $\nu$ of \Cref{the:clifford} is $d\varphi/4\pi$, and the claims follow from \Cref{the:clifford} with $\frac{\sqrt3}4m_0m_1=\sqrt3$.
\end{proof}

\begin{example}[the cubic of $\Pol(1,4)$]\label{ex:polar14}
For $u=\frac12v_0(|z|^2-|w|^2)$ on $\R{}\times\mathbb C\times\mathbb C$, see \eqref{pol14}, the idempotent variety consists of the two circles $\hat c=(\frac1{\sqrt3},\sqrt{\frac23}e^{i\varphi},0)$ and $\hat c=(-\frac1{\sqrt3},0,\sqrt{\frac23}e^{i\varphi})$. By \Cref{the:clifford}, $\frac{\sqrt3}{2}(\nu_++\nu_-)$, where $\nu_\pm$ is the normalized arc length on the two circles, is a representing measure, with total mass $\sqrt3=\frac{\sqrt3}{12}\trace \Kill$, and $u$ is stationary.
\end{example}

\section{Local minimality}\label{sec:local}

We now analyse the directions in which the first variation \eqref{danskin} vanishes.

Let $D=\{h\in\hcub:\ h|_{\hIdm}=0\}$. The infinitesimal rotations of $u$,
$$
o(u)=\{h_X(x)=\scal{Du(x)}{Xx}:\ X\in\so(n)\},
$$
always lie in $D\cap u^\perp$: indeed, since $X$ is skew-symmetric,
$$
h_X(\hat c)=\tfrac12\scal{\hat c^{\,2}}{X\hat c}=\tfrac1{2|c|}\scal{\hat c}{X\hat c}=0,
$$
and $h_X\perp u$ because rotations preserve $\|u\|_2$. Moreover $\dim o(u)=\dim\so(n)-\dim\mathrm{Der}_{\so}(\alg)$, where $\mathrm{Der}_{\so}(\alg)$ is the Lie algebra of $G$.

\begin{theorem}\label{the:strict}
Let $u$ be a Hsiang cubic which admits a Kolmogorov measure $\mu$ with $\supp\mu=\Idm(\alg)$, and assume $D=o(u)$. Then $u$ is a strict local minimizer of $R$ modulo rotations and scaling: there is a neighbourhood $U$ of $u$ in $\hcub$ such that $R(p)\ge R(u)$ for $p\in U$, with equality only if $p=t\,u\circ g$ for some $t>0$ and $g\in O(n)$.
\end{theorem}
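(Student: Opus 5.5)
We would argue by contradiction, in the standard way for strict local minima of max-type functionals with a transversality (slice) condition. Using rotation and scaling invariance, we first reduce to a slice. The orbit map $(t,g)\mapsto t\,u\circ g$ is a smooth immersion near $(1,\Id)$ modulo the stabilizer $G$. Its tangent space at $u$ is $\R{}u\oplus o(u)$. Let $W=(\R{}u\oplus o(u))^\perp\subset\hcub$ (orthogonal in $L^2(\sigma)$). By the slice theorem for the compact group $O(n)\times\R{}_{>0}$ acting on $\hcub\setminus\{0\}$, every $p$ close to $u$ can be written as $p=t\,(u+h)\circ g$ with $t>0$, $g\in O(n)$ and $h\in W$ small. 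It therefore suffices to show that $R(u+h)>R(u)$ for all $0\ne h\in W$ with $|h|$ small.

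For the first-order step, write $h=\varepsilon k$ with $k\in W$ and $\|k\|_2=1$. Since $h\perp u$, we have $\|u+h\|_2^2=\|u\|_2^2+\varepsilon^2$. Thus we need $\|u+\varepsilon k\|_\infty^2>M^2(1+\varepsilon^2/\|u\|_2^2)$, i.e.\ $\|u+\varepsilon k\|_\infty\ge M+\alpha\varepsilon^2$ with $\alpha>M/(2\|u\|_2^2)$, uniformly in $k$. The point is the Kolmogorov measure $\mu$ with full support. By \eqref{cubature}, $\int k(\hat c)\,d\mu=0$ for $k\perp u$. Since $k\in W$ and $W\cap D=W\cap o(u)=\{0\}$ by the hypothesis $D=o(u)$, the function $k|_{\hIdm}$ is not identically zero. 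It is continuous and has $\mu$-mean zero, and $\supp\mu=\Idm(\alg)$, so $\max_c k(\hat c)>0$. By compactness of the unit sphere of $W$, $\beta:=\min_{k}\max_c k(\hat c)>0$. Hence $\|u+\varepsilon k\|_\infty\ge u(\hat c)+\varepsilon k(\hat c)\ge M+\beta\varepsilon$. This linear gain dominates the quadratic loss in the denominator for $\varepsilon<\varepsilon_0$, giving $R(u+\varepsilon k)\ge R(u)\bigl(1+2\beta\varepsilon/M\bigr)/(1+\varepsilon^2/\|u\|_2^2)>R(u)$.

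The main obstacle is therefore not in the estimate itself, which is first order once $D=o(u)$ is assumed, but in making the reduction to the slice rigorous. One must check three things. First, $o(u)$ is exactly the tangent space of the orbit, i.e.\ $h_X$ with $X\in\so(n)$ spans it. Second, the complement $W$ is transversal, so that the map $(t,g,h)\mapsto t(u+h)\circ g$ is a local submersion onto a neighbourhood of $u$. This follows from the inverse function theorem applied on $\R{}\times(\so(n)/\mathrm{Der}_{\so}(\alg))\times W$, using $\dim o(u)=\dim\so(n)-\dim\mathrm{Der}_{\so}(\alg)$. Third, the equality case must be handled. If $R(p)=R(u)$ with $p=t(u+h)\circ g$, the strict inequality above forces $h=0$, so $p=t\,u\circ g$ as claimed.

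Finally, uniformity in $k$ deserves care. The estimate $\max_c k(\hat c)\ge\beta$ holds for unit $k\in W$. For general small $h$ we simply apply it with $\varepsilon=\|h\|_2$, so no second-order analysis of $\|u+h\|_\infty$ is needed under these hypotheses.
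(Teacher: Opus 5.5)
Your proposal is correct and follows the paper's proof essentially step for step. You use the same slice $W=u^\perp\cap o(u)^\perp$, reached through the local surjectivity of $(t,g,h)\mapsto t\,(u+h)\circ g$. You also use the same first-order bound $\max_c h(\hat c)\ge\delta\|h\|_2$ on $W$, obtained from the full-support Kolmogorov measure and compactness, so that the linear gain $M+\delta\|h\|_2$ beats the quadratic growth of $\|u+h\|_2^2$. Your quotient by $\mathrm{Der}_{\so}(\alg)$ is a slightly more careful version of the paper's surjective-differential argument, and the appeal to a ``slice theorem'' for a compact group is unnecessary (and $\R{}_{>0}$ is not compact).
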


\begin{proof}
Let $W=u^\perp\cap o(u)^\perp\subset\hcub$. For $h\in W\setminus\{0\}$ we have $h\notin D$; by \eqref{cubature}, $\int h(\hat c)\,d\mu=0$, and since $h$ does not vanish identically on $\supp\mu=\Idm(\alg)$, it takes a positive value there. Hence $\phi(h)=\max_ch(\hat c)>0$ on $W\setminus\{0\}$, and by homogeneity and compactness $\phi(h)\ge\delta\|h\|_2$ for some $\delta>0$. Since $(u+h)(\hat c)=M+h(\hat c)$ exactly,
$$
R(u+h)\ge\frac{(M+\delta\|h\|_2)^2}{\|u\|_2^2+\|h\|_2^2}>\frac{M^2}{\|u\|_2^2}=R(u)\qquad\text{for }0<\|h\|_2<\varepsilon_0.
$$
The map $\Phi(t,g,h)=t\,(u+h)\circ g$ from $\R{}_{>0}\times O(n)\times W$ to $\hcub$ has surjective differential at $(1,\Id,0)$, since its image is $\R{}u+o(u)+W=\hcub$. By the inverse function theorem, every $p$ near $u$ can therefore be written as $p=t\,(u+h)\circ g$ with $h\in W$ small, and $R(p)=R(u+h)$.
\end{proof}

\begin{corollary}\label{cor:strict}
The cubic $x_1x_2x_3$ on $\R3$ ($\Pol(1,2)$) and the Lawson cubic on $\R4$ ($\Pol(2,2)$) are strict local minimizers of $R$ on $\hcub$ modulo rotations and scaling. Consequently
$$
\kappa_3\le\tfrac{35}9,\qquad \kappa_4\le\tfrac{32}9,
$$
and each of these values is a strict local minimum value of $R$.
\end{corollary}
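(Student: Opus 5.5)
The plan is to apply \Cref{the:strict} to both cubics. Two hypotheses have to be checked: a Kolmogorov measure with full support on $\Idm(\alg)$, and the rigidity condition $D=o(u)$. The bounds on $\kappa_3,\kappa_4$ then follow at once from \Cref{tab:ratio}, because $\kappa_n\le R(u)$.

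\emph{Full support.} For $x_1x_2x_3$, \Cref{ex:x1x2x3} gives a representing measure (hence a Kolmogorov measure) with equal atoms at all four idempotents. For the Lawson cubic, \Cref{pro:lawson} gives $\sqrt3$ times arc length on the idempotent circle \eqref{lawsonidm}. Both measures have support equal to $\Idm(\alg)$.

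\emph{Rigidity for $n=3$.} Here $\dim\hcub=7$ and $\hIdm$ consists of the four points $\varepsilon/\sqrt3$ with $\varepsilon_1\varepsilon_2\varepsilon_3=1$. The automorphism group $G$ is finite (signed permutations preserving $x_1x_2x_3$), so $\dim o(u)=\dim\so(3)=3$. Inside $o(u)$ we have $o(u)\subset D\cap u^\perp$, while $u\notin D$ because $u(\hat c)=M$. The four evaluation functionals on $\hcub$ are linearly independent: the explicit check is that no harmonic cubic other than $0$ vanishes at the four points and is orthogonal to $o(u)$ and $u$. Hence $\dim D=7-4=3$, and $D=o(u)$. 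To confirm independence I would write a general element of $\hcub$ in the basis $x_1x_2x_3$, $x_i(x_j^2-x_k^2)$ and $x_i^3-\frac32x_i(x_j^2+x_k^2)$. I would then evaluate it at the tetrahedron vertices, which gives a $4\times7$ matrix, and check that its rank is $4$.

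\emph{Rigidity for $n=4$.} This is the main obstacle, because $\hIdm$ is now a whole circle and $\dim\hcub=16$. First I would compute $\dim o(u)=6-\dim\mathrm{Der}_{\so}(\alg)$. The Lie algebra of $G$ should be one-dimensional: it is the rotation generating the circle \eqref{lawsonidm}, acting by angle $\psi$ on $(v_0,v_1)$ and by $\psi/2$ on $(v_2,v_3)$. This gives $\dim o(u)=5$. Next I would substitute \eqref{lawsonidm} into a general $h\in\hcub$. The restriction $h(\hat c(\varphi))$ is a trigonometric polynomial in $\varphi/2$, combining degree $\le3$ in $(\sin\varphi,\cos\varphi)$ with degree $\le3$ in $(\cos\frac\varphi2,\sin\frac\varphi2)$; it is odd under $\varphi\mapsto\varphi+2\pi$ only in the $z$-odd part, and so on. I would then show that the linear map $h\mapsto h|_{\hIdm}$ has rank $16-5=11$. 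The cleanest route is to decompose $\hcub$ under the circle action into weight spaces, where the weight in $\varphi/2$ runs over odd integers up to $\pm7$ or so. For each weight I would compare the dimension of that weight space in $\hcub$ with the dimension of the matching Fourier modes $e^{ik\varphi/2}$ on the circle, and check injectivity modulo $o(u)$ weight by weight. This reduces the problem to small linear algebra in each weight, which I would carry out by hand.

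With $D=o(u)$ and full support established, \Cref{the:strict} shows that both cubics are strict local minimizers modulo rotations and scaling. The values $R=\frac{35}9$ and $R=\frac{32}9$ come from \Cref{tab:ratio} (equivalently from \Cref{the:clifford}), and they give the stated upper bounds on $\kappa_3$ and $\kappa_4$.
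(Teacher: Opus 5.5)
Your proposal is correct and has the same skeleton as the paper's proof. You apply \Cref{the:strict} with the full-support representing measures of \Cref{ex:x1x2x3} and \Cref{pro:lawson}, and you reduce $D=o(u)$ to $\dim D=\dim o(u)$.

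For $x_1x_2x_3$ your $4\times7$ rank check is the paper's argument in coordinates. Your basis element $x_i^3-\frac32x_i(x_j^2+x_k^2)$ is $\frac52Z_{e_i}$ and equals $-2\varepsilon_i/(3\sqrt3)$ on the nodes. Also, $x_1x_2x_3$ is constant on the nodes and the $x_i(x_j^2-x_k^2)$ vanish there. So, up to scaling, the non-zero columns are the vectors $(1,1,1,1),\varepsilon_1,\varepsilon_2,\varepsilon_3$ of the paper's Hadamard matrix. Your sentence about cubics orthogonal to $o(u)$ and $u$ is equivalent to $\dim D=3$ only because $D\subset u^\perp$, which follows from the Kolmogorov measure; the rank computation makes that sentence unnecessary.

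The genuine difference is in the Lawson case. The paper parametrizes the circle \eqref{lawsonidm} rationally and obtains rank $11$ by elimination over $\mathbb Q$. You instead use that \eqref{lawsonidm} is a single orbit of the one-parameter subgroup of $G$. The restriction map is therefore equivariant, and each weight space $V_k$ of $\hcub\otimes\mathbb C$ goes into the single Fourier mode $e^{ik\psi}$, $\psi=\varphi/2$, with coefficient given by evaluation at $c(0)$. The rank is then the number of weights $k$ on which this evaluation is not identically zero.

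This works, but your bookkeeping is wrong. Take $\zeta=v_1+iv_0$ (weight $2$) and $w=v_2+iv_3$ (weight $1$). Then $u=\frac12\re(\bar\zeta w^2)$ and $c(\varphi)=(\zeta,w)=(\frac12e^{i\varphi},\frac1{\sqrt2}e^{i\varphi/2})$. The weights of $\hcub$ are all integers $-6\le k\le6$, not odd integers up to $\pm7$. Both parities occur, as your own remark about $z$-odd terms already shows. The multiplicities are $\dim V_k=2$ for $k\in\{0,\pm3\}$ and $\dim V_k=1$ otherwise, giving $16$ in total.

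Evaluation at $c(0)$ vanishes identically only on $V_{\pm1}$, which is spanned by $2|\zeta|^2w-|w|^2w$ and its conjugate. Hence the rank is $13-2=11$. Moreover, $D\otimes\mathbb C$ has one-dimensional weight spaces at $0,\pm1,\pm3$. These are exactly the weights $0,0,\pm1,\pm3$ of $\so(4)\otimes\mathbb C$ with the stabilizer direction removed.

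You still have to verify, as the paper does by direct computation, that the stabilizer is only one-dimensional. Equivalently, $o(u)\subset D$ must have non-zero components at each of these weights. You only asserted this, but it is a short check on the weight vectors of $\so(4)$.

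The two routes trade off as follows. The paper's elimination is mechanical and needs no structure. Yours can be done by hand, and it explains which directions make up $D$: in particular, the weight $\pm1$ harmonic cubics happen to vanish on the whole idempotent circle.
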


\begin{proof}
We apply \Cref{the:strict}. Representing measures with full support on $\Idm(\alg)$ are the four equal atoms on the tetrahedron for $x_1x_2x_3$ (\Cref{ex:x1x2x3}) and the arc length for the Lawson cubic (\Cref{pro:lawson}). It remains to verify $D=o(u)$, i.e.\ $\dim D=\dim o(u)$.

\emph{$x_1x_2x_3$.} Here $G$ is finite, so $\dim o(u)=3$. The evaluation map $\hcub\to\R{4}$, $h\mapsto(h(\hat c_\varepsilon))_\varepsilon$, is onto: on the four points $\hat c_\varepsilon=\varepsilon/\sqrt3$ the harmonic cubic $x_1x_2x_3$ is constant, and the zonal harmonics $Z_{e_i}$, $i=1,2,3$, are non-zero multiples of $\varepsilon_i$; the four resulting vectors are the rows of a Hadamard matrix. Hence $\dim D=7-4=3$.

\emph{Lawson.} A direct computation gives $\dim o(u)=5$ (so the Lie algebra of $G$ is one-dimensional), and $\dim\hcub=16$. Using the rational parametrization $\cos\frac\varphi2=\frac{1-t^2}{1+t^2}$, $\sin\frac\varphi2=\frac{2t}{1+t^2}$ of \eqref{lawsonidm}, the condition $h|_{\hIdm}=0$ becomes the vanishing of two polynomials in $t$ with rational coefficients (the coefficients of $1$ and $\frac1{\sqrt2}$). The resulting linear map $\hcub\to\mathbb Q[t]^2$ has rank $11$, as elimination over $\mathbb Q$ shows; hence $\dim D=5$.
\end{proof}

\begin{remark}[Cartan's cubic]\label{rem:cartanlocal}
For $n=5$ local minimality is superseded by \Cref{the:cartanmin}: Cartan's cubic $\Cartan1$ is the global minimizer of $R$ on $\R5$, unique up to rotations and scaling, so that $\kappa_5=3$. The criterion of \Cref{the:strict} is consistent with this. The invariant measure of \Cref{cor:cartan} has full support, and a representation-theoretic computation with respect to $SO(3)\subset G$ shows that $D=o(u)$: both are the $7$-dimensional irreducible $SO(3)$-module.
\end{remark}

\subsection{The cubic of \texorpdfstring{$\Pol(1,4)$}{Pol(1,4)} is not a local minimizer}
Let now $u=\frac12v_0(y_1^2+y_2^2-y_3^2-y_4^2)$ be the cubic \eqref{pol14} of $\Pol(1,4)$ on $\R5=\R{}\times\mathbb C\times\mathbb C$, and write $z=y_1+iy_2$, $w=y_3+iy_4$, so that $u=\frac12v_0(|z|^2-|w|^2)$. Its idempotents form two circles,
$$
\Idm(\alg)=\bigl\{(\tfrac12,z,0):\ |z|^2=\tfrac12\bigr\}\cup\bigl\{(-\tfrac12,0,w):\ |w|^2=\tfrac12\bigr\},
$$
and $u$ is stationary (\Cref{ex:polar14}). The natural competitor of $u$ lives on the hyperplane $e_0^\perp=\mathbb C^2$:
\begin{equation}\label{Tcubic}
u_L=\tfrac12\re(z\,w^2)=\tfrac12y_1(y_3^2-y_4^2)-y_2y_3y_4 .
\end{equation}
In the coordinates $(v_0,v_1,v_2,v_3)=(-y_2,y_1,y_3,y_4)$ of $e_0^\perp$ this is exactly the normalized Lawson cubic ($\Pol(2,2)$) of \Cref{sec:hsiang}. Note that $u_L$, regarded as a cubic form on $\R5$, is not a Hsiang cubic (the direction $e_0$ annihilates the whole algebra), but it is harmonic, $\max_{S^4}u_L=\max_{S^3}u_L=M$, and $u_L$ vanishes on $\Idm(\alg)$, being linear in $z$ and quadratic in $w$.

The two cubics $u$ and $u_L$ are related through Cartan's cubic $\Cartan1$. Writing $\Cartan1=\sqrt2\det X$ on $\Sym_0(3,\R{})$ in coordinates adapted to the splitting of a $3\times3$ symmetric matrix into blocks of sizes $2+1$, one finds, after an orthogonal change of the variables $y$,
\begin{equation}\label{cartansplit}
\sqrt3\,\Cartan1=\sqrt6\det X=\tfrac32\,u+\sqrt3\,u_L-\tfrac13\bigl(v_0^3-\tfrac34v_0|y|^2\bigr),
\end{equation}
where the last term is a multiple of the zonal harmonic $Z_{e_0}$. Thus, up to a zonal correction, \emph{Cartan's cubic is the sum of the cubic of $\Pol(1,4)$ and the Lawson cubic}, placed in complementary positions: $u$ is even in $z$, while $u_L$ is linear in $z$ and quadratic in $w$.

The key observation is that the cubic of $\Pol(1,4)$ and the Lawson cubic equioscillate jointly.

\begin{lemma}\label{lem:joint}
On $S^4$,
\begin{equation}\label{joint}
|u+i\,u_L|\le M,
\end{equation}
and equality at a point $x$ implies $u(x)=0$ or $u_L(x)=0$.
\end{lemma}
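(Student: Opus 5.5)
The plan is to reduce \eqref{joint} to an elementary inequality on a $2$-simplex by separating moduli from phases. Write a point of $S^4$ as $x=(v_0,z,w)$ and put $S=v_0^2$, $A=|z|^2$, $B=|w|^2$, so that $S+A+B=1$ and $S,A,B\ge0$. The phases of $z$ and $w$ enter only through $u_L$. Indeed $u=\frac12v_0(A-B)$ exactly, while $|u_L|=\frac12|\re(zw^2)|\le\frac12|z||w|^2$. Hence
$$
|u+i\,u_L|^2=u^2+u_L^2\le\tfrac14F(A,B),\qquad F(A,B)=(1-A-B)(A-B)^2+AB^2 .
$$
Since $M^2=\frac1{27}$, it suffices to prove $F\le\frac4{27}$ on the simplex $\{A,B\ge0,\ A+B\le1\}$. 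We also need to determine the points where equality holds.

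First I would check the boundary of the simplex, where $F$ reduces to the one-variable Chebyshev-type cubic $t^2(1-t)$, whose maximum on $[0,1]$ is $\frac4{27}$, attained only at $t=\frac23$:
\begin{itemize}
\item On $B=0$ we get $F=(1-A)A^2$, with maximum $\frac4{27}$ at $A=\frac23$.
\item On $A=0$ we get $F=(1-B)B^2$, with maximum $\frac4{27}$ at $B=\frac23$.
\item On $S=0$, i.e.\ $A+B=1$, we get $F=AB^2$, with maximum $\frac4{27}$ at $B=\frac23$, $A=\frac13$.
\end{itemize}
Each boundary equality point forces one of the two forms to vanish:
\begin{itemize}
\item at $B=0$ we have $w=0$, so $u_L=0$;
\item at $A=0$ we have $z=0$, so $u_L=0$;
\item at $S=0$ we have $v_0=0$, so $u=0$.
\end{itemize}
So the equality statement follows once we show that $F<\frac4{27}$ in the open simplex. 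This covers everything, because equality in \eqref{joint} forces $F=\frac4{27}$.

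The main step, and the one I expect to be the obstacle, is the interior estimate. I would handle it by locating the interior critical points of $F$. Setting $D=A-B$, one has $\partial_AF=-D^2+2(1-A-B)D+B^2$ and $\partial_BF=-D^2-2(1-A-B)D+2AB$. Adding and subtracting these gives two polynomial equations, which I would solve explicitly. I expect the solutions to be a few isolated points, for instance along $A=B$, where $F=AB^2=A^3\le\frac1{27}$ since $A\le\frac12$. I would then evaluate $F$ at each of them and confirm the value is below $\frac4{27}$. If the algebra becomes messy, the fallback is to fix $B$ and treat $F$ as a cubic in $A$ on $[0,1-B]$. Its leading coefficient is $-1$, so it is concave beyond its inflection point, and one can then compare with the endpoint values $(1-B)B^2$ and $(1-B)^2B+\dots$, each of which is already bounded by $\frac4{27}$ from the boundary analysis.

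Finally, the first inequality $|u_L|\le\frac12ab^2$ needs no separate equality analysis. At equality points $F$ must equal $\frac4{27}$, and these lie only on the boundary, where the vanishing of $u$ or $u_L$ was already verified.
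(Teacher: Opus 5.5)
Your reduction is the same as the paper's: with the torus-orbit invariants $A=|z|^2$, $B=|w|^2$, $S=v_0^2$ you get $4|u+iu_L|^2\le F=S(A-B)^2+AB^2$. Your boundary analysis, and the way you derive the equality clause from it, are correct.

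The genuine gap is the interior estimate $F<\frac4{27}$ on the open simplex. This is the entire content of the lemma, and you only announce it. Both concrete indications you give for it are also wrong.

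\begin{itemize}
\item There is no interior critical point on the line $A=B$. Adding your two partial derivatives gives $2(A-B)^2=B^2+2AB$, i.e.\ $2A^2-6AB+B^2=0$, which on $A=B$ forces $A=0$. Moreover, on that line $F=A^3\le\frac18$, not $\frac1{27}$.
\item The fallback also fails. For fixed $B$, both endpoint values of $A\mapsto F(A,B)$ on $[0,1-B]$ equal $(1-B)B^2$, not $(1-B)^2B+\dots$. Concavity past the inflection point cannot bound an interior maximum by endpoint values. For small $B$ the cubic is close to $A^2(1-A)$, whose interior maximum is near $\frac4{27}$, while the endpoint values are of order $B^2$.
\end{itemize}

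The critical-point route can be completed. The equation $2A^2-6AB+B^2=0$ gives $A=kB$ with $k=\frac{3\pm\sqrt7}2$. The difference equation $4S(A-B)=B(2A-B)$ is then linear in $B$ on each ray, so there is exactly one critical point on each, with $S=\frac17$ in both cases:
\[
(A,B)=\Bigl(\tfrac{8\pm2\sqrt7}{21},\tfrac{10\mp2\sqrt7}{21}\Bigr),\qquad F=\tfrac{812\mp112\sqrt7}{9261}<\tfrac{1372}{9261}=\tfrac4{27},
\]
the inequality holding because $\sqrt7<5$. Together with your boundary analysis, this proves the lemma.

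The paper avoids calculus altogether.
\begin{itemize}
\item If $A\ge B$, then $(A-B)^2\le A^2$ and $AB^2\le A^2B$, so $F\le A^2(S+B)=A^2(1-A)$.
\item If $B\ge A$, then $(A-B)^2\le B^2$, so $F\le B^2(S+A)=B^2(1-B)$.
\end{itemize}
Hence $F\le m^2(1-m)\le\frac4{27}$ with $m=\max(A,B)$. Tracking equality in these two elementary estimates yields your three boundary points directly, so no separate interior analysis is needed.
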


\begin{proof}
The torus $\mathbb T^2$ acts orthogonally on $\R{}\times\mathbb C\times\mathbb C$ by $(v_0,z,w)\mapsto(v_0,e^{i\alpha}z,e^{i\beta}w)$. It preserves $u$ and rotates the phase of $zw^2$. Therefore the joint amplitude $|u+iu_L|$ is controlled by the orbit invariants $a=|z|^2$, $b=|w|^2$, $c=v_0^2$, which range over the simplex $a+b+c=1$:
$$
4|u+iu_L|^2=c\,(a-b)^2+\re(zw^2)^2\le c\,(a-b)^2+ab^2 .
$$
Compare this with $u$ on its meridian $w=0$: at the point with $|z|^2=m$, $v_0^2=1-m$ it takes the value $p(m)=\frac12m\sqrt{1-m}$, and $\max_{[0,1]}p=M$, attained only at $m=\frac23$ (this is the equioscillation of $u$, \Cref{the:equi}). We claim that the joint amplitude is dominated by this meridian profile:
$$
c\,(a-b)^2+ab^2\le 4p(m)^2=m^2(1-m),\qquad m=\max(a,b).
$$
Indeed, if $a\ge b$ then $(a-b)^2\le a^2$ and $ab^2\le a^2b$, so the left hand side is at most $a^2(c+b)=a^2(1-a)$; if $b\ge a$ then $(a-b)^2\le b^2$ and the left hand side is at most $b^2(c+a)=b^2(1-b)$. Hence $|u+iu_L|\le p(m)\le M$. Equality forces $m=\frac23$ and equality in the elementary estimates, i.e.\ $(a,b,c)\in\{(\frac23,0,\frac13),(0,\frac23,\frac13),(\frac13,\frac23,0)\}$. In the first two cases $u_L(x)=0$, in the third $u(x)=0$.
\end{proof}

\begin{proposition}\label{pro:polarsaddle}
For every $\theta\notin\frac\pi2\mathbb Z$,
$$
R(\cos\theta\,u+\sin\theta\,u_L)<R(u)=R(u_L).
$$
In particular, the cubic $u$ of $\Pol(1,4)$ is stationary, but it is not a local minimizer of $R$.
\end{proposition}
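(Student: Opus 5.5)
Put $p_\theta=\cos\theta\,u+\sin\theta\,u_L$ and compare the numerator and the denominator of $R(p_\theta)$ separately.

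\emph{Step 1: the $L^2$-norm.} We show that $\|p_\theta\|_2^2=\|u\|_2^2=\|u_L\|_2^2$. First, $u\perp u_L$ in $L^2(\sigma)$. The torus action $w\mapsto e^{i\beta}w$ preserves $u$ and multiplies $zw^2$ by $e^{2i\beta}$; averaging over $\beta$ kills the integral of $u\,u_L$. Second, the norms agree. Both are harmonic, so \Cref{lem:fischer} gives $\|p\|_2^2=\Fs pp/N_5$, and $\Fs pp=6\|p\|_B^2$. By \Cref{the:ratio} (or directly from \Cref{tab:ratio}), $\trace\Kill=12$ for $\Pol(1,4)$. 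The Lawson cubic also has $\trace\Kill=12$, and regarding it as a cubic on $\R5$ adds no monomials. Its Fischer norm is therefore the same, and $\|u_L\|_2^2=\frac{12}{6N_5}=\|u\|_2^2$. Hence $\|p_\theta\|_2^2=\cos^2\theta\|u\|_2^2+\sin^2\theta\|u_L\|_2^2=\|u\|_2^2$. Since $\|u\|_\infty=\|u_L\|_\infty=M$ (for $u_L$ this is $\max_{S^4}u_L=\max_{S^3}u_L$ by homogeneity, noted before \eqref{Tcubic}), this also gives $R(u)=R(u_L)$.

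\emph{Step 2: the sup-norm.} We show that $\|p_\theta\|_\infty<M$ for $\theta\notin\frac\pi2\mathbb Z$. We have $p_\theta=\re\bigl(e^{-i\theta}(u+iu_L)\bigr)$, so $|p_\theta(x)|\le|u(x)+iu_L(x)|\le M$ on $S^4$ by \Cref{lem:joint}. By compactness the maximum of $|p_\theta|$ on $S^4$ is attained at some $x$. If it equalled $M$, then equality would hold in \eqref{joint} at $x$, and by the lemma either $u(x)=0$ or $u_L(x)=0$. In the first case $|p_\theta(x)|=|\sin\theta|\,|u_L(x)|\le|\sin\theta|M<M$. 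In the second case $|p_\theta(x)|\le|\cos\theta|M<M$. Either way we get a contradiction, so $\|p_\theta\|_\infty<M$. Combining with Step 1 gives $R(p_\theta)<M^2/\|u\|_2^2=R(u)$.

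\emph{Step 3: conclusion.} Letting $\theta\to0^+$ shows that there are cubics arbitrarily close to $u$ in $\hcub$ with strictly smaller ratio. Hence $u$ is not a local minimizer, while it is stationary by \Cref{ex:polar14} and \Cref{the:kolm}.

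The main obstacle is the equality of the $L^2$-norms in Step 1. I would verify it directly from the Bombieri norms of the monomial expansions:
\[
\|u\|_B^2=4\cdot\tfrac14\cdot\tfrac13=\tfrac13,\qquad
\|u_L\|_B^2=2\cdot\tfrac14\cdot\tfrac13+\tfrac16=\tfrac13 .
\]
These agree, which confirms the Killing-trace values in \Cref{tab:ratio}.
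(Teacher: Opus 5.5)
Your proof is correct and follows the paper's argument. Orthogonality together with equal Fischer norms ($\Fs uu=\Fs{u_L}{u_L}=2$, which your Bombieri computation confirms) gives $\|p_\theta\|_2=\|u\|_2$, and \Cref{lem:joint} applied to $p_\theta=\re\bigl(e^{-i\theta}(u+iu_L)\bigr)$ gives $\|p_\theta\|_\infty<M$. The differences are cosmetic: you obtain orthogonality by averaging over the circle action on $w$ instead of using the parity $z\mapsto-z$, and you exclude $|p_\theta(x)|=M$ by the case split $u(x)=0$ or $u_L(x)=0$ instead of the paper's phase condition $\arg(u+iu_L)(x)=\theta$.
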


\begin{proof}
First, $u\perp u_L$ in $L^2(\sigma)$, because $u$ is even and $u_L$ is odd under $z\mapsto-z$, and $\|u_L\|_2=\|u\|_2$ by \Cref{lem:fischer}, since $\Fs{u_L}{u_L}=\Fs uu=2$. Hence $p_\theta=\cos\theta\,u+\sin\theta\,u_L$ satisfies $\|p_\theta\|_2=\|u\|_2$ for all $\theta$, and $p_\theta=\re\bigl(e^{-i\theta}(u+iu_L)\bigr)$. By \Cref{lem:joint}, $p_\theta\le M$ on $S^4$, and equality at $x$ would require $|u+iu_L|(x)=M$ and $\arg(u+iu_L)(x)=\theta$; for $\theta\notin\frac\pi2\mathbb Z$ this contradicts the second part of the lemma. By compactness, and since $p_\theta$ is odd, $\|p_\theta\|_\infty<M$, so $R(p_\theta)<M^2/\|u\|_2^2=R(u)$. For $\theta=\frac\pi2$ we get $R(u_L)=R(u)$, since $\max_{S^4}u_L=M$.
\end{proof}

\begin{remark}
Thus on the great circle $\{p_\theta\}$ of the sphere $\{\|p\|_2=\|u\|_2\}\subset\hcub$ through the cubic of $\Pol(1,4)$ and the Lawson cubic, the Chebyshev ratio attains its maximum $R(u)$ exactly at $\pm u$ and $\pm u_L$ and is strictly smaller everywhere else. In particular, not every Hsiang cubic is a local minimizer of the Chebyshev problem. By \eqref{cartansplit}, the direction of decrease from $u$ points towards Cartan's cubic $\Cartan1$, the global minimizer in $\R5$ (\Cref{the:cartanmin}), and it does so through the Lawson cubic.\end{remark}

\section{Concluding remarks}\label{sec:open}

The central structural question raised by \Cref{the:kolm,the:moments} is the following.

\begin{problem}\label{prob:waring}
Does every exceptional Hsiang algebra admit a representing measure supported on $\Idm(\alg)$, i.e.\ a positive measure $\mu$ on $\Idm(\alg)$ with
$$
xy=6\int_{\Idm(\alg)}\scal x{\hat c}\scal y{\hat c}\,\hat c\,d\mu(c),\qquad x,y\in\alg\,?
$$ Is the normalized Riemannian volume of $\Idm(\alg)$ (suitably weighted) always such a measure?
\end{problem}

By Theorem~D the answer is positive for all polar algebras, for the algebras covered by \Cref{the:homog} and for all Hsiang algebras with a Steiner decomposition (\Cref{pro:psts}). Together these include every Hsiang algebra known to exist, so the problem concerns the hypothetical exceptional algebras in the admissible Peirce profiles that are neither realized nor excluded; any such algebra with a Steiner decomposition in the sense of \cite{HsiangStrongRigidity} is automatically stationary. The second question is open even for the known algebras. A positive answer would give the identity ``the algebra is the third moment of its idempotent variety'' for all Hsiang algebras and, by \Cref{the:kolm}, would prove that every Hsiang cubic is stationary for the Chebyshev problem. The equilibrium condition \eqref{equil} is a necessary condition that can be tested intrinsically; it is natural to ask whether it characterizes the representing measures.

\begin{problem}\label{prob:global}
Prove \Cref{conj:kappa}. Is $\kappa_n$ attained by a Hsiang cubic for every $n$? If so, by the one maximizing $\trace \Kill$ among the Hsiang cubics in $\R n$?
\end{problem}

Some numerical experiments by the author support \Cref{conj:kappa} for $n=3$: a random search over $\hcub(\R3)$, followed by local minimization from many random starting points, did not produce values of $R$ below $\frac{35}9$, and the best values found approached $\frac{35}9$ at cubics congruent to $x_1x_2x_3$ ($\Pol(1,2)$). The experiments also suggest that the landscape of $R$ is not simple: besides the absolute minimum there appear to be several other local minimum levels, which makes a global proof by local methods alone unlikely.

\begin{problem}\label{prob:converse}
(a) If a harmonic cubic has only the critical values $\pm M$ and $0$ on the sphere, is it a Hsiang cubic? (b) If a harmonic cubic is stationary for \eqref{kappa0} and its extremal set is infinite, is it a Hsiang cubic?
\end{problem}

\subsection{Further directions}
The quadrature identity \eqref{cubature} represents the functional $h\mapsto\int_Sh\,u\,d\sigma$ on $\hcub$ by a positive measure on a lower-dimensional set, as for quadrature domains \cite{Gustafsson-Putinar-book,Shapiro-schwarz,Gustafsson-Shapiro,Khavinson-Lundberg}. This suggests the \emph{full} moment problem: which harmonic polynomials of higher degree admit quadrature identities with nodes on $\hIdm$, and do the subalgebras $\R{}c\oplus\alg_c(-1)$, $\R{}c\oplus\alg_c(-\frac12)$ produce families of them?

There is also an energy interpretation. By the reproducing property of $Z_a$,
$$
\|u\|_2^2=\frac6{N_n}\iint G\bigl(\scal{\hat c}{\hat c'}\bigr)\,d\mu(c)\,d\mu(c'),\qquad G(t)=t^3-\frac{3}{n+2}t,
$$
for every Kolmogorov measure $\mu$, so at fixed sup-norm the Chebyshev problem is a \emph{maximal energy problem} for measures on the alternation set, a counterpart of ``Chebyshev constant $=$ transfinite diameter'' \cite{Saff-Totik}. For finite $\Idm(\alg)$ the idempotents are indeed Fekete-type configurations (an equilateral triangle, a regular tetrahedron for $x_1x_2x_3$), and in general $\scal{c_1}{c_2}\ge-\frac38$ with equality exactly on triads \cite{Tkachev-book}.

Finally, \Cref{rem:kellogg} identifies \eqref{kappa0} with a minimal Lipschitz problem, and one may ask whether Hsiang cubics are distinguished for other $L^\infty$-variational problems on harmonic polynomials, or whether the equation $\Delta_\infty u=\lambda|x|^2u$ has natural solutions in higher degree; for degree four the Cartan--M\"unzner polynomials \cite{Munzner-II} are the obvious candidates.

\bibliographystyle{plain}

\end{document}